\newcommand{\ac}{\alpha}
\newcommand{\Exp}{\mathbb{E}}
\newcommand{\E}[1]{{\mathbb{E}\left[#1\right] }}    % expectation
\newcommand{\Prob}{\mathbb{P}}
\newcommand{\R}{\mathbb{R}}
\DeclareMathOperator{\Var}{\mathbb{V}}
\newcolumntype{P}[1]{>{\centering\arraybackslash}p{#1}}
\newcolumntype{M}[1]{>{\centering\arraybackslash}m{#1}}
\newcommand{\eqdef}{\overset{\text{def}}{=}}
\newcommand{\cC}{{\cal C}}
\newcommand{\cD}{{\cal D}}
\newcommand{\cE}{{\cal E}}
\newcommand{\cG}{{\cal G}}
\newcommand{\cV}{{\cal V}}
\newcommand{\bA}{{\bf A}}
\newcommand{\bL}{{\bf L}}
\newcommand{\bS}{{\bf S}}
\newcommand{\mA}{{\bf A}}
\newcommand{\mI}{{\bf I}}
\newcommand{\mL}{{\bf L}}
\newcommand{\mS}{{\bf S}}
\theoremstyle{plain}
\newtheorem{thm}{Theorem}[]
\newtheorem{lem}[thm]{Lemma}
\newtheorem{defn}[thm]{Definition}
\newtheorem{cor}[thm]{Corollary}
\theoremstyle{remark}
\newcommand{\algG}{1} % Gossip
\newcommand{\algE}{3} % Epsilon
\newcommand{\algB}{2} % Binary
\newcommand{\algN}{4} % Noise
\newcommand{\ones}{{\bf 1}}
\begin{document}
\title{\bf Privacy Preserving Randomized Gossip Algorithms}
\author{Filip Hanzely${}^\star$ \and Jakub Kone\v{c}n\'{y}${}^\star$  \and Nicolas Loizou${}^\star$\and
Peter Richt\'{a}rik${}^{\diamond\star}$
\and Dmitry Grishchenko${}^\dagger$\\ \phantom{xxx}\\
${}^\star$ \em University of Edinburgh, UK\\
${}^\diamond$ \em KAUST, KSA\\
${}^\dagger$ \em Higher School of Economics, Russia}

\date{June 21, 2017}
\maketitle

% This paper was written when F.H., J.K., N.L. and M.G. visited P.R. at KAUST.

\begin{abstract}
In this work we present three different randomized gossip algorithms for  solving the average consensus problem while at the same time protecting  the information about the initial private values stored at the nodes. We give iteration complexity  bounds for  all methods, and perform extensive numerical experiments. 
\end{abstract}

\section{Introduction}
\label{sec:introduction}

In this paper we consider the average consensus (AC) problem. Let $\cG=(\cV,\cE)$ be an undirected connected network  with node set $\cV=\{1,2,\dots,n\}$ and edges $\cE$ such that $|\cE| = m$. Each node $i \in \cV$ ``knows'' a private value $c_i \in \R$. The goal of AC is for every node of the network to compute the average of these values, $\bar{c}\eqdef\tfrac{1}{n}\sum_i c_i$, in a distributed fashion. That is, the exchange of information can only occur between connected nodes (neighbours).

The literature on methods for solving the average consensus problem is vast and has long history \cite{tsitsiklis1984problems, tsitsiklis1986distributed, bertsekas1989parallel, kempe2003gossip}. The algorithms for solving this problem can be divided in two broad categories: the average consensus algorithms \cite{xiao2004fast} and the gossip algorithms \cite{boyd2006randomized, shah2009gossip}. The main difference is that the former work in a synchronous setting while the gossip algorithms model the case of asynchronous setting. In the average consensus algorithms, all nodes of the network update their values simultaneously by communicate with a set of their neighbours and in all iterations the same update occurs. In gossip algorithms, at each iteration, only one edge is selected randomly, and the corresponding nodes update their values to their average. In this work we focus on randomized gossip algorithms and propose techniques for protecting information of the initial values $c_i$, in the case when these may be sensitive.

In this work  we develop and analyze three private variants of the randomized pairwise gossip algorithm for solving the average consensus problem. As an additional requirement  we wish to prevent  nodes to ``learn'' information about the private values of other nodes. While we shall not formalize the notion of privacy preservation in this work, it will be intuitively clear that our methods indeed make it harder for nodes to infer information about the private values of other nodes.

\subsection{Background}

The average consensus problem and randomized gossip algorithms for solving it appear in many applications, including distributed data fusion in sensor networks \cite{xiao2005scheme}, load balancing \cite{cybenko1989dynamic} and clock synchronization \cite{freris2012fast}. This subject was first introduced in \cite{tsitsiklis1984problems}, and was studied extensively in the last decade; the seminal 2006 paper of Boyd et al.\ \cite{boyd2006randomized} on randomized gossip algorithms motivated a large amount of subsequent research and generated more than $1500$ citations to date.

In this work, we focus on modifying the basic algorithm of \cite{boyd2006randomized}, which we refer to as ``Standard Gossip'' algorithm. In the following, we review several avenues of research the gossip algorithms were evolved. While we do not address any privacy considerations in these settings, they can serve as inspiration for further work. For a survey of relevant work prior to 2010, we refer the reader to reviews in \cite{2010gossip, olfati2007consensus, ren2007information}.

The \emph{Geographic Gossip algorithm} was proposed in \cite{dimakis2008geographic}, in which the authors combine the gossip approach with a geographic routing towards a randomly chosen location with main goal the improvement of the convergence rate of Standard Gossip algorithm. In each step, a node is activated, assuming that it is aware of its geographic location and some additional assumptions on the network topology, it chooses another node from the rest of the network (not necessarily one of its neighbours) and performs a pairwise averaging with this node. Later, using the same assumptions, this algorithm was extended into \emph{Geographic Gossip Algorithm with Path Averaging} \cite{benezit2010order}, in which connected sequences of nodes were chosen in each step and they averaged their values. More recently, in \cite{freschi2016accelerating} and \cite{freschi2017} authors propose a geographic and path averaging methods which converge to the average consensus without the assumption that nodes are aware of their geographic location.

Another important class of randomized gossip algorithms are the \emph{Broadcast Gossip algorithms}, firts proposed in \cite{aysal2009broadcast} and then extended in \cite{franceschelli2011distributed, wu2013, jun2013performance}. The idea of this algorithm is simple: In each step a node in the network is activated uniformly at random, following the asynchronous time model, and broadcasts its value to its neighbours. The neighbours receive this value and update their own values. It was experimentally shown that this method converge faster than the pairwise and geographic randomized gossip algorithms.

Alternative approach to the gossip framework are so called \emph{non-randomized Gossip algorithms} \cite{mou2010deterministic, he2011periodic, liu2011deterministic, yu2017distributed}. Typically, this class of algorithms executes the pairwise exchanges between nodes in a deterministic, such as pre-defined cyclic, order. $T$-periodic gossiping is a protocol which stipulates that each node must interact with each of its neighbours exactly once every $T$ time units. Under suitable connectivity assumptions of the network $\mathcal{G}$, the $T$-periodic gossip sequence will converge at a rate determined by the magnitude of the second largest eigenvalue of the stochastic matrix determined by the sequence of pairwise exchanges which occurs over a period. It has been shown that if the underlying graph is a tree, this eigenvalue is the same for all possible $T$-periodic gossip protocols.

A different approach, uses memory in the update of the values each node holds, to get \emph{Accelerated Gossip algorithms}. The nodes update their value using an update rule that involve not only the current values of the sampled nodes but also their previous values. This idea is closely related to the shift register methods studied in numerical linear algebra for improving the convergence rate of linear system solvers. The works \cite{cao2006accelerated, 2013analysis} have shown theoretically and numerically, that under specific assumptions this idea can improve the performance of the Standard Gossip algorithm.

\emph{Randomized Kaczmarz-type Gossip algorithms.} Very recently has been proved that popular randomized Kaczmarz-type methods for solving large linear systems can also solve the AC problem. In particular, in \cite{SDA} and \cite{LoizouRichtarik} it was shown how that existing Randomized Kaczmarz and Randomized Block Kaczmarz methods can be interpreted as randomized gossip algorithms for solving the AC problem, by solving a particular system encoding the underlying network structure. This approach was the first time that a connection between the two research areas of linear system solvers and distributed algorithms have been established.

In this work we are interested in the asynchronous time model \cite{boyd2006randomized, bertsekas1989parallel}. More precisely, we assume that each node of our network has a clock which ticks at a rate of $1$ Poisson process. This is equivalent of having available a global clock which ticks according to a rate $n$ Poisson process and selects an edge of the network uniformly at random. In general the synchronous setting (all nodes update the values of their nodes simultaneously using information from a set of their neighbours) is convenient for theoretical considerations, but is not representative of some practical scenarios, such as the distributed nature of sensor networks. For more details on clock modelling we refer the reader to \cite{boyd2006randomized}, as the contribution of this paper is orthogonal to these considerations.

\emph{Privacy and Average Consensus.} 
Finally, the introduction of notions of privacy within the AC problem is relatively recent in the literature, and the existing works consider two different ideas.

The concept of differential privacy \cite{dwork2014algorithmic} is used to protect the output value $\bar{c}$ computed by all nodes in \cite{huang2012differentially}. In this work, an exponentially decaying Laplacian noise is added to the consensus computation. This notion of privacy refers to protection of the final average, and formal guarantees are provided.

A different goal is the protection of the initial values $c_i$ the nodes know at the start. In \cite{manitara2013privacy, mo2017privacy}, the goal is to make sure that each node is unable to infer a lot about the initial values $c_i$ of any other node. Both of these methods add noise correlated across individual iterations, to make sure they converge to the exact average. A formal notion of privacy breach is formulated in \cite{mo2017privacy}, in which they also show that their algorithm is optimal in that particular sense.

\subsection{Main Contributions}

In this work we present three different approaches for solving the Average Consensus problem while at the same time protecting the information about the initial values. All of the above mentioned works focus on the synchronous setting of the AC problem. This work is the first which combines the \emph{gossip framework} with the privacy concept of protection of the initial values. It is important to stress that we provide tools for protection of the initial values, but we do not address any specific notion of privacy or a threat model, nor how these quantitatively translate to any explicit measure. These would be highly application dependent, and we only provide theoretical convergence rates for the techniques we propose.

The methods we propose are all dual in nature. The dual approach is explained in detail in Section~\ref{sec:duality}. It was first proposed for solving linear systems in \cite{SDA} and then extend to the concept of average consensus problems in \cite{LoizouRichtarik}. The dual updates immediately correspond to updates to the primal variables, via an affine mapping. One of the contributions of our work is that we exactly recover existing convergence rates for the primal iterates as a special case.

We now outline the three different techniques we propose in this paper, which we refer to as ``Binary Oracle'', ``$\epsilon$-Gap Oracle'' and ``Controlled Noise Insertion''. The first two are, to the best of our knowledge, the first proposal of weakening the oracle used in the gossip framework. The latter is inspired by and similar to the addition of noise proposed in \cite{mo2017privacy} for the synchronous setting. We extend this technique by providing explicit finite time convergence guarantees.

{\bf Binary Oracle.}
The difference from standard Gossip algorithms we propose is to reduce the amount of information transmitted in each iteration to a single bit. More precisely, when an edge is selected, each corresponding node will only receive information whether the value on the other node is smaller or larger. Instead of setting the value on each node to their average, each node increases or decreases its value by a pre-specified step.

{\bf $\epsilon$-Gap Oracle.}
In this case, we have an oracle that returns one of three options, and is parametrized by $\epsilon$. If the difference in values of sampled nodes is larger than $\epsilon$, an update similar to the one in Binary Oracle is taken. Otherwise, the values remain unchanged. An advantage compared to the Binary Oracle is that this approach will converge to a certain accuracy and stop there, determined by $\epsilon$ (Binary Oracle will oscillate around optimum for a fixed stepsize). However, in general it will disclose more information about the initial values.

{\bf Controlled Noise Insertion.}
This approach is inspired by the works of \cite{manitara2013privacy, mo2017privacy}, and protects the initial values by inserting noise in the process. Broadly speaking, in each iteration, each of the sampled nodes first add a noise to its current value, and an average is computed afterwards. Convergence is guaranteed because of correlation in the noise across iterations. Each node remembers the noise it added last time it was sampled, and in the following iteration, the previously added noise is first subtracted, and a fresh noise of smaller magnitude is added.

Empirically, the protection of initial values is provided by first injecting noise in the system, which propagates across network, but is gradually withdrawn to ensure convergence to the true average.

\begin{table}[!h]
\centering
\begin{tabular}{ |p{4.4cm}||M{3.8cm}|M{4.4cm}|M{0.8cm}|  }
 \hline
 \multicolumn{4}{|c|}{Main Results} \\
 \hline
 \hbox{Randomized Gossip Methods} & Convergence Rate & Success Measure & Thm \\
 \hline
 \hline
 Standard Gossip \cite{boyd2006randomized} & $\left( 1-\frac{\ac(\cG)}{2m} \right)^k$ & $\E{\tfrac{1}{2}\|\bar{c} \ones - x^k\|^2}$ & \ref{thm:G} \\
  \hline
 \hline
 {\bf New:} Private Gossip with Binary Oracle & $1 / \sqrt{k}$ & $\min_{t \leq k} \E{\frac{1}{m}\sum_{e} |x^t_i - x^t_j|}$ & \ref{thm:jhs988sh} \\
 \hline
 {\bf New:} Private Gossip with $\epsilon$-Gap Oracle & $ 1 / (k \epsilon^2)$ & $\E{\frac{1}{k}\sum_{t=0}^{k-1}\Delta^t (\epsilon)}$ & \ref{thm:09y09s9ffs} \\
 \hline
 {\bf New:} Private Gossip with Controlled Noise Insertion & $\left( 1-\min\left( \frac{\ac(\cG)}{2m},\frac{\gamma}{m}\right) \right)^k$ & $\E{ D(y^*)- D(y^{k}) }$& \ref{T: ng general convergence} \\
 \hline
\end{tabular}
\caption{Complexity results of all proposed gossip algorithms.}
\label{table1}
\end{table}

{\bf Convergence Rates of our Methods.} In Table~\ref{table1}, we present summary of convergence guarantees for the above three techniques. By $\|\cdot\|$ we denote the standard Euclidean norm. 

The two approaches to restricting the amount of information disclosed, Binary Oracle and $\epsilon$-Gap Oracle, converge slower than the standard Gossip. In particular, these algorithms have sublinear convergence rate. At first sight, this should not be surprising, since we indeed use much less information. However, in Theorem~\ref{thm: stepsize_adaptive}, we show that if we had in a certain sense perfect global information, we could use it to construct a sequence of adaptive stepsizes, which would push the capability of the binary oracle to a linear convergence rate. However, this rate is still $m$-times slower than the standard rate of the binary gossip algorithm.  We note, however,  that having global information at hand  is an impractical assumption. Nevertheless, this result highlights that there is a potentially large scope for improvement, which we leave for future work.

These two oracles could be in practice implemented using established secure multiparty computation protocols \cite{cramer2015secure}. However, this would require the sampled nodes to exchange more than a single message in each iteration. This is inferior to the requirements of the standard gossip algorithm, but the concern of communication efficiency is orthogonal to the contribution of this work, and we do not address it further.

The approach of Controlled Noise Insertion yields a linear convergence rate which is driven by maximum of two factors. Without going into details, which of these is bigger depends on the speed by which the magnitude of the inserted noise decays. If the noise decays fast enough, we recover the convergence rate of standard the gossip algorithm. In the case of slow decay, the convergence is driven by this decay. By $\ac(\cG)$ we denote the {\em algebraic connectivity} of  graph $\cG$ \cite{fiedler1973algebraic}. The parameter $\gamma$ controls the decay speed of the inserted noise, see \eqref{Eq: phi_i def}.

\subsection{Measures of Success} Second major distinction to highlight is that convergence of each of these proposals naturally depend on a different measure of suboptimality. All of them go to $0$ as we approach optimal solution. The details of these measures will be described later in the main body, but we give a brief summary of their connections below.

The standard Gossip and Controlled Noise Insertion depend on the same quantity, but we present the latter in terms of dual values as this is what our proofs are based on. Lemma~\ref{L: rel measures} formally specifies this equivalence. The binary oracle depends on average difference among directly connected nodes. The measure for the $\epsilon$-Gap Oracle depends on quantities $\Delta^t(\epsilon)=\frac1m\left|\{ (i,j)\in  \cE: |x_i^t-x_j^t|\geq \epsilon\} \right|$, which is the number of edges, connecting nodes, values of which differ by more than $\epsilon$. 

To draw connection between these measures, we provide the following lemma, proved in the Appendix. The dual function  $D$ is formally defined in Section~\ref{sec:duality}.

\begin{lem} 
\label{L: rel measures}
(Relationship between convergence measures)
Suppose that $x$ is primal variable corresponding to the dual variable $y$ as defined in \eqref{Eq: duality mapping}.
Dual suboptimality can be expressed as the following \cite{SDA}:
\begin{equation}
\label{eq:99d8gds}
D(y^*)-D(y) = \tfrac{1}{2}\|\bar{c} \ones - x\|^2.
\end{equation}

Moreover, for any $x\in \R^{n}$ we have :

\begin{eqnarray}
 \frac{1}{2n}\sum_{i=1}^n \sum_{j=1}^n  (x_j  -x_i)^2
&=& 
\|\bar{c}\ones - x\|^2 
\label{Eq: equality}
\\
\sum_{e=(i,j)\in \cE}|x_i-x_j|
&\leq& 
 \sqrt{mn}\|\bar{c} \ones -x \|,
\label{Eq: s upper bound}
\\
\sum_{e=(i,j)\in \cE}|x_i-x_j|
&\geq& 
\sqrt{\ac(\cG)}\|\bar{c} \ones-x \|,
\label{Eq: s lower bound}
\\
\sum_{e=(i,j)\in \cE}|x_i-x_j|
&\geq& \epsilon
  \left|\{ (i,j)\in  \cE: |x_i-x_j|\geq\epsilon\}\right|.
\label{Eq: delta bound}
\end{eqnarray}

\end{lem}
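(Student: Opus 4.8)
Since the first identity \eqref{eq:99d8gds} is quoted directly from \cite{SDA}, I take it as given and concentrate on the four relations \eqref{Eq: equality}--\eqref{Eq: delta bound}. The guiding idea is to establish the algebraic identity \eqref{Eq: equality} first and then use it as the bridge that converts each of the three edge sums into a statement about the Euclidean distance $\|\bar{c}\ones-x\|$.

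For \eqref{Eq: equality} I would simply expand both sides. Writing $s=\sum_i x_i$, a direct computation gives $\sum_{i,j=1}^n (x_j-x_i)^2 = 2n\sum_i x_i^2 - 2s^2$, while $\|\bar{c}\ones-x\|^2 = \sum_i x_i^2 - n\bar{c}^2$. Using that the average is conserved by the gossip dynamics, so $\bar{c}=s/n$, both expressions reduce to $\sum_i x_i^2 - s^2/n$, which proves the identity. Strictly this uses that the mean of $x$ equals $\bar{c}$, which holds automatically for every gossip iterate.

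The upper bound \eqref{Eq: s upper bound} follows from Cauchy--Schwarz applied to $\sum_{e}1\cdot|x_i-x_j|$, giving $\sum_e |x_i-x_j| \le \sqrt{m}\,\bigl(\sum_e (x_i-x_j)^2\bigr)^{1/2}$. Since $\cE$ is a set of unordered pairs, $\sum_{e\in\cE}(x_i-x_j)^2 \le \tfrac12\sum_{i,j}(x_i-x_j)^2$, which by \eqref{Eq: equality} equals $n\|\bar{c}\ones-x\|^2$; combining the two yields the factor $\sqrt{mn}$. For \eqref{Eq: delta bound} I would just discard all edges with $|x_i-x_j|<\epsilon$ and lower-bound each surviving term by $\epsilon$, leaving $\epsilon$ times the cardinality of the set $\{(i,j)\in\cE:|x_i-x_j|\geq\epsilon\}$.

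The one step that needs a genuine tool is the lower bound \eqref{Eq: s lower bound}. Here I would recognize $\sum_{e}(x_i-x_j)^2 = x^\top L x$ as the quadratic form of the graph Laplacian $L$, and invoke the variational characterization of the algebraic connectivity: because $\ones$ spans the kernel of $L$, the Rayleigh quotient restricted to $\ones^\perp$ is bounded below by $\ac(\cG)$, so $x^\top L x \ge \ac(\cG)\|\bar{c}\ones-x\|^2$, the vector $x-\bar{c}\ones$ being exactly the projection of $x$ onto $\ones^\perp$. To finish, I pass from the sum of squares back to the sum of absolute values using the elementary inequality $\bigl(\sum_e |x_i-x_j|\bigr)^2 \ge \sum_e (x_i-x_j)^2$, valid because the omitted cross terms are nonnegative, and take square roots. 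I expect this spectral argument to be the main obstacle, since it is the only place where the geometry of $\cG$ (rather than mere counting or Cauchy--Schwarz) enters the proof.
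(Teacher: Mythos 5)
Your proposal is correct, and on three of the four claims it coincides with the paper's own argument: for \eqref{Eq: equality} the paper packages the same computation as an auxiliary identity $\sum_i\bigl(\sum_j(x_j-x_i)\bigr)^2=\tfrac{n}{2}\sum_i\sum_j(x_j-x_i)^2$ while you expand both sides directly; for \eqref{Eq: s upper bound} both use Cauchy--Schwarz plus the fact that edges form a subset of the unordered pairs; and \eqref{Eq: delta bound} is dismissed as trivial in both. The genuine divergence is in the lower bound \eqref{Eq: s lower bound}. You invoke the variational (Rayleigh quotient) characterization of $\ac(\cG)=\lambda_{\min}^+(\mL)$ directly: since $x-\bar c\ones\perp\ones$ and $\ones$ spans the kernel of $\mL$, one has $\sum_{e}(x_i-x_j)^2=x^\top\mL x\ge\ac(\cG)\|\bar c\ones-x\|^2$, and then $\bigl(\sum_e|x_i-x_j|\bigr)^2\ge\sum_e(x_i-x_j)^2$ finishes the claim. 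The paper instead routes through machinery it needs elsewhere: it defines the combinatorial constant $\beta(\cG)$ in \eqref{eq:hdgugvej}, proves $\beta(\cG)=n/\ac(\cG)$ (Lemma~\ref{lem:beta}) by comparing $\mL$ with the complete-graph Laplacian $n\mI-\ones\ones^\top$, and combines these into Lemma~\ref{L: D bound by beta}, i.e.\ inequality \eqref{eq:8998gd98gikd} --- which is exactly the Rayleigh-quotient inequality you prove in one step. So the two routes establish the same intermediate inequality; yours is shorter and self-contained, while the paper's amortizes its cost because Lemmas~\ref{lem:beta} and~\ref{L: D bound by beta} are reused in the proof of Theorem~\ref{thm:G}. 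A final point in your favour: you explicitly note that \eqref{Eq: equality}--\eqref{Eq: s lower bound} require $\bar c$ to be the mean of $x$ (automatic for primal iterates, since $\ones^\top\bA^\top y=0$), a hypothesis the lemma's ``for any $x\in\R^n$'' phrasing obscures and which the paper's appendix uses silently, stating it only inside Lemma~\ref{L: D bound by beta}.
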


\subsection{Outline}

The remainder of this paper is organized as follows: Section~\ref{sec:duality} introduces  the basic setup that are used through the paper. A detailed explanation of the duality behind the randomized pairwise gossip algorithm is given. We also include a novel and insightful dual analysis of this method as it will make it easier to the reader to parse later development.   In Section~\ref{sec:Private} we present our three private gossip algorithms as well as the associated iteration complexity results. Section~\ref{sec:experiments} is devoted to the numerical evaluation of our methods. Finally, conclusions are drawn in Section~\ref{sec:conclusion}. All proofs not included in the main text can be found  in the Appendix.

\section{Dual Analysis of Randomized Pairwise Gossip} \label{sec:duality}

As we outlined in the introduction, our approach to extending the (standard) randomized pairwise  gossip algorithm to privacy preserving variants  utilizes duality. The purpose of this section is to formalize this duality, following the development in \cite{SDA}. In addition, we provide a novel and self-contain dual analysis of randomized pairwise gossip. While this is of an independent interest, we include the proofs as their understanding aids in the understanding of the more involved proofs of our private gossip algorithms developed in the remainder of the paper.

\subsection{Primal and Dual Problems}
\label{sec:primal_dual}

Consider solving the (primal) problem of projecting a given vector $c=x^0\in \R^n$ onto the solution space of a  linear system: 
\begin{equation}\label{eq:primal}\min_{x\in \R^n} \{P(x) \eqdef \tfrac{1}{2}\|x-x^0\|^2\} \quad \text{subject to} \quad \bA x=b,\end{equation} 
where $\bA\in \R^{m\times n}$, $b\in \R^m$, $x^0\in \R^n$.
We assume the problem is feasible, i.e., that the system $\mA x = b$ is consistent. With the above optimization problem we associate the dual problem
\begin{equation}\label{eq:dual}\max_{y\in \R^m} D(y)\eqdef (b-\bA x^0)^\top y - \tfrac{1}{2}\|\bA^\top y\|^2.\end{equation}
The dual is an unconstrained concave (but not necessarily strongly concave) quadratic maximization problem. It can be seen that as soon as the system $\mA x = b$ is feasible, the dual problem is bounded. Moreover, all bounded concave quadratics in $\R^m$ can be written in the as $D(y)$ for some matrix $\mA$ and vectors $b$ and $x^0$ (up to an additive constant).

With any dual vector $y$ we associate the primal vector  via an affine transformation:
\[x(y) = x^0 + \mA^\top y.\]
It can be shown that if $y^*$ is dual optimal, then $x^*=x(y^*)$ is primal optimal. Hence, any dual algorithm producing a sequence of dual variables $y^t \to y^*$ gives rise to a corresponding primal algorithm producing the sequence $x^t \eqdef x(y^t) \to x^*$. We shall now consider one such dual algorithm.

\subsection{Stochastic Dual Subspace Ascent}

SDSA~\cite{SDA} is a stochastic method for  solving the dual problem \eqref{eq:dual}. If we assume that $b=0$, the iterates of SDSA take the form
\begin{equation}\label{eq:SDSA}y^{t+1} = y^t - \bS_t(\bS_t^\top \bA \bA^\top \bS_t)^\dagger \bS_t^\top \bA(x^0 + \bA^\top y^t),\end{equation}
where $\mS_t$ is a random matrix drawn from independently at each iteration $t$ from an arbitrary but fixed distribution  $\cD$, and $\dagger$ denotes the Moore-Penrose pseudoinverse.

The corresponding primal iterates are defined via:
\begin{equation}
x^{t} \eqdef x(y^t) = x^0 + \bA^\top y^t
\label{Eq: duality mapping}
\end{equation}

The relevance of this all to average consensus follows through the observation, as we shall see next, that for a specific choice of matrix $\mA$ (as defined in the next subsection) and distribution $\cD$, method \eqref{Eq: duality mapping} is equivalent to the (standard) randomized pairwise gossip method. In that case, SDSA is a dual variant of randomized pairwise gossip. In particular, we define $\cD$ as follows: $S_t$ is a  unit basis vector in $\R^m$, chosen uniformly at random from the collection of all such unit basis vectors, denoted $\{f_e \;|\; e\in \cE\}$. In this case, SDSA is a randomized coordinate ascent method applied to the dual problem.

For general distributions $\cD$, the primal methods obtained from SDSA via \eqref{Eq: duality mapping} (but without observing that it arises that way) was fist proposed and studied in \cite{SIMAX2015} under a full rank assumption on $\mA$. This assumption was lifted, and duality exposed and studied as we explain it here, in \cite{SDA}. For deeper insights and connections to stochastic optimization, stochastic fixed point methods, stochastic linear systems and probabilistic intersection problems, we refer the reader to \cite{ASDA}. The method can be extended to compute the inverse \cite{inverse} and pseudoinverse \cite{pseudoinverse} of a matrix, in which case it has deep connections with quasi-Newton updates \cite{inverse}. In particular, it can be used to design a stochastic block extension of the famous BFGS method \cite{inverse} and applied to the empirical risk minimization problem arising in machine learning to design a fast stochastic quasi-Newton training method \cite{SBFGS}.

\subsection{Randomized Gossip Setup: Choosing $\mA$}

We wish $(\mA,b)$ to be an {\em average consensus (AC)} system, defined next. 

\begin{defn} (\cite{LoizouRichtarik}) 
Let  $\cG = (\cV, \cE)$ be an undirected graph with $|\cV| = n$ and $|\cE|=m$. Let $\bA$ be a real matrix with $n$ columns.
The linear system $\bA x = b$ is an ``average consensus (AC) system'' for graph $\cG$ if $\bA x = b$ iff $x_i = x_j$ for all $(i,j) \in \cE$.
\end{defn}

Note that if $\mA x = b$ is an AC system, then the solution of the primal problem \eqref{eq:primal} is necessarily
$x^* = \bar{c} \cdot \ones$, where $\bar{c} = \frac{1}{n}\sum_{i=1}^n x^0_i.$
 This is exactly what we want: we want the solution of the primal problem to be $x^*_i = \bar{c}$ for all $i$: the average of the private values stored at the nodes. 

It is easy to see that a linear system is an AC system precisely when $b=0$ and the nullspace of $\bA$ is $\{t \ones : t\in \R\}$, where $\ones$ is the vector of all ones in $\R^n$.  Hence, $\bA$ has rank $n-1$.

In the rest of this paper we focus on a specific AC system; one  in which the matrix $\bA$  is the incidence matrix of the graph $\cG$ (see Model 1 in \cite{SDA}).  In particular, we let $\mA\in \R^{m\times n}$ be the matrix defined as follows. Row $e=(i,j)\in \cE$  of $\mA$ is given by $\mA_{ei} = 1$, $\mA_{ej}=-1$ and $\mA_{el}=0$ if $l\notin \{i,j\}$.  Notice that the system $
\mA x=0$ encodes the constraints $x_i=x_j$ for all $(i,j)\in \cE$, as desired.

\subsection{Randomized Pairwise Gossip}
\label{sec:gossip}

We provide both primal and dual form of the (standard) randomized pairwise gossip algorithm. 

The primal form is standard and needs no lengthy commentary. At the beginning of the process, node $i$ contains private information $c_i = x^0_i$. In each iteration we sample a pair of connected nodes $(i,j)\in \cE$ uniformly at random, and update $x_i$ and $x_j$ to their average. We let the values at the remaining nodes intact.

\setcounter{algorithm}{\algG-1}
\begin{algorithm}[H]
\textbf{Input: }{vector of private values $c\in \R^n$}\\
{\textbf {Initialize:}} Set $x^0=c$.\\
\For {$t= 0,1,\dots, k-1$} {
	 \begin{enumerate}
\item Choose node $e = (i,j)\in \cE$ uniformly at random
\item Update the primal variable: 
%\[x^{k+1}_i =x^{k+1}_j= 
%\frac{x^k_i+x^k_j}{2},\ \forall\, l \neq i,j:\,x^{k+1}_l=x^{k}_l
%  \]
  \[x^{t+1}_l = \begin{cases}
\frac{x^t_i+x^t_j}{2},\quad & l \in \{i,j\}\\
 x^{t}_l, \quad & l \notin \{i,j\}.
 \end{cases}
  \]
\end{enumerate}
 }
\textbf{return} $x^k$
\caption{(Primal form)}
\end{algorithm}

The dual form of the standard randomized pairwise gossip method is a specific instance of SDSA, as described in \eqref{eq:SDSA}, with $x^0=c$ and $\mS_t$ being a randomly chosen standard unit basis vector $f_e$ in $\R^m$ ($e$ is a randomly selected edge). It can be seen \cite{SDA} that in that case, \eqref{eq:SDSA} takes the following form:

\setcounter{algorithm}{\algG-1}
\begin{algorithm}[H]
\textbf{Input: }{vector of private values $c\in \R^n$}\\
{\textbf {Initialize:}} Set $y^0=0\in\R^m$.\\
\For {$t= 0,1,\dots, k-1$} {
\begin{enumerate}
\item Choose node $e = (i,j)\in \cE$ uniformly at random
\item Update the dual variable: 
\[y^{t+1} =  y^t + \lambda^t f_e \quad \mathrm{where} \quad \lambda^t =\mathrm{argmax}_{\lambda'} D(y^t + \lambda' f_e). \]
\end{enumerate}
 }
\textbf{return} $y^k$
\caption{(Dual form)}
\end{algorithm}

The following lemma is useful for the analysis of all our methods.It describes the increase in the dual function value after an arbitrary change to a single dual variable $e$.

\begin{lem} \label{lem:98y98yss}
Define $z=y^t + \lambda f_e$, where $e=(i,j)$ and $\lambda\in \R$. Then
 \begin{equation} \label{eq:89g9s8guffxx} D(z) - D(y^t) = -\lambda(x^t_i - x^t_j) - \lambda^2.\end{equation}
\end{lem}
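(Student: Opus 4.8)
The plan is to expand $D(z)$ directly from the definition of the dual objective in \eqref{eq:dual} and isolate the dependence on $\lambda$. Writing $D(y) = (b - \bA x^0)^\top y - \tfrac{1}{2}\|\bA^\top y\|^2$ and substituting $z = y^t + \lambda f_e$, the linear part contributes $D(y^t)$ plus $\lambda (b-\bA x^0)^\top f_e$, while the quadratic part expands via $\|u+\lambda v\|^2 = \|u\|^2 + 2\lambda u^\top v + \lambda^2\|v\|^2$ with $u = \bA^\top y^t$ and $v = \bA^\top f_e$. After cancelling the common $D(y^t)$ term, this yields
\[
D(z) - D(y^t) = \lambda (b - \bA x^0)^\top f_e - \lambda (\bA^\top y^t)^\top (\bA^\top f_e) - \tfrac{\lambda^2}{2}\|\bA^\top f_e\|^2 .
\]

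The two structural facts about the incidence matrix then do all the work. Since $\bA$ is the incidence matrix of $\cG$, the vector $\bA^\top f_e$ (the $e$-th row of $\bA$, transposed) equals $+1$ in coordinate $i$, $-1$ in coordinate $j$, and $0$ elsewhere; hence $\|\bA^\top f_e\|^2 = 2$, which turns the quadratic term into exactly $-\lambda^2$. For the two linear terms I would use that $b=0$ for any AC system, so that $(b-\bA x^0)^\top f_e = -(\bA x^0)^\top f_e = -(x^0_i - x^0_j)$, together with the duality mapping $\bA^\top y^t = x^t - x^0$ from \eqref{Eq: duality mapping}, which gives $(\bA^\top y^t)^\top (\bA^\top f_e) = (x^t_i - x^0_i) - (x^t_j - x^0_j)$.

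Substituting these into the coefficient of $\lambda$ and collecting terms, the $x^0$ contributions coming from the two separate sources cancel, leaving precisely $-\lambda(x^t_i - x^t_j)$; combining with the quadratic term yields the claimed identity \eqref{eq:89g9s8guffxx}. The computation is essentially bookkeeping, so I do not expect a genuine obstacle; the only point requiring care is tracking the $x^0$ terms so that those arising from the linear coefficient $b-\bA x^0$ exactly cancel those arising from the cross term $\bA^\top y^t = x^t - x^0$. The conceptual key is to rewrite everything through the duality mapping so that the answer comes out in terms of the primal iterate $x^t$ rather than the dual variable $y^t$.
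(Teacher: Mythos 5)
Your proposal is correct and follows essentially the same route as the paper's proof: both expand the quadratic dual objective directly, use $\|\bA^\top f_e\|^2 = 2$ for the incidence matrix, and invoke the duality mapping to express the linear coefficient in terms of $x^t$. The only cosmetic difference is that the paper factors the linear terms as $-\lambda f_e^\top \bA(c + \bA^\top y^t) = -\lambda f_e^\top \bA x^t$ in one step, whereas you evaluate the two pieces separately and let the $x^0$ contributions cancel.
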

\begin{proof}
The claim follows by direct calculation:
\begin{eqnarray*}
 D(y^{t}+ \lambda f_e) - D(y^t)
&=& -(\bA c)^\top (y^t + \lambda f_e) - \tfrac{1}{2}\|\bA^\top (y^t+ \lambda f_e)\|^2 + (\bA c)^\top y^t + \tfrac{1}{2}\| \bA^\top y^t\|^2\\
&=&  - \lambda f_e^\top \bA\underbrace{(c +  \bA^\top y^t)}_{x^t} - \tfrac{1}{2}\lambda^2 \underbrace{\| \bA^\top f_e\|^2}_{=2}  \quad = \quad  -\lambda (x^t_i-x^t_j) - \lambda^2.
\end{eqnarray*}
\end{proof}

The maximizer in $\lambda$ of the expression in \eqref{eq:89g9s8guffxx} leads to the exact line search formula
$\lambda^t = (x_j^t-x_i^t)/2$
used in the dual form of the method. 

\subsection{Complexity Results}

With graph $\cG = \{\cV,\cE\}$ we now associate a certain quantity, which we shall denote $\beta = \beta(\cG)$. It is the smallest nonnegative number $\beta$ such that the following inequality\footnote{We write $\sum_{(i,j)}$ to indicate sum over all {\em unordered} pairs of vertices. That is, we do not count $(i,j)$ and $(j,i)$ separately, only once. By $\sum_{(i,j)\in \cE}$ we denote a sum over all edges of $\cG$. On the other hand, by writing $\sum_i \sum_j $, we are summing over all (unordered) pairs of vertices twice.} holds for all $x\in \R^n$:

\begin{equation}\label{eq:hdgugvej}
 \sum_{(i,j)} (x_j  -x_i)^2 \leq \beta \sum_{(i,j)\in \cE} (x_j  -x_i)^2.
\end{equation}

The Laplacian matrix of graph $\cG$ is given by $\mL = \mA^\top \mA$. Let $\lambda_1(\mL)\geq \lambda_2(\mL) \geq \dots \geq \lambda_{n-1}(\mL)\geq \lambda_n(\mL)$ be the eigenvalues of $\mL$. The {\em algebraic connectivity} of $\cG$ is the second smallest eigenvalue of $\mL$:
\begin{equation} \label{eq:algebraic_connectivity} \ac(\cG) = \lambda_{n-1}(\mL).\end{equation}
We have $\lambda_{n}(\mL)=0$. Since we assume $\cG$ to be connected, we have $\ac(\cG)>0$. 
Thus,  $\ac(\cG)$ is the smallest nonzero eigenvalue of the Laplacian: $\ac(\cG)=\lambda_{\min}^+(\mL) = \lambda_{\min}^+(\mA^\top \mA ).$
As the next result states, the quantities $\beta(\cG)$ and $\ac(\cG)$ are inversely proportional.

\begin{lem}\label{lem:beta} $\beta(\cG) = \frac{n}{\ac(\cG)}.$
\end{lem}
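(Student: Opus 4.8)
The plan is to turn the defining inequality \eqref{eq:hdgugvej} into a statement about Rayleigh quotients of the Laplacian $\mL = \mA^\top\mA$ and then invoke the Courant--Fischer characterization of its eigenvalues.

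First I would rewrite both sides of \eqref{eq:hdgugvej} in closed form. For the right-hand side, observe that $\sum_{(i,j)\in\cE}(x_j-x_i)^2 = \|\mA x\|^2 = x^\top \mL x$, directly from the definition of the incidence matrix $\mA$. For the left-hand side, the footnote convention gives $\sum_{(i,j)}(x_j-x_i)^2 = \tfrac12\sum_{i}\sum_{j}(x_j-x_i)^2$, and combining this with identity \eqref{Eq: equality} from Lemma~\ref{L: rel measures} yields $\sum_{(i,j)}(x_j-x_i)^2 = n\|\bar{c}\ones - x\|^2$. Thus \eqref{eq:hdgugvej} is equivalent to requiring
\[
n\|\bar{c}\ones - x\|^2 \leq \beta\, x^\top \mL x \qquad \text{for all } x\in\R^n,
\]
and $\beta(\cG)$ is by definition the smallest constant making this hold.

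Next I would reduce to the orthogonal complement of $\ones$. Since $\bar{c}\ones$ is exactly the orthogonal projection of $x$ onto $\mathrm{span}(\ones)$, writing $x = \bar{c}\ones + u$ with $u\in\ones^\perp$ gives $\|\bar{c}\ones - x\|^2 = \|u\|^2$. Because $\cG$ is connected, the nullspace of $\mL$ is precisely $\mathrm{span}(\ones)$, so $x^\top\mL x = u^\top \mL u$. On the remaining directions $x\in\mathrm{span}(\ones)$ both sides vanish and impose no constraint on $\beta$; hence the smallest admissible $\beta$ is
\[
\beta(\cG) = n\,\sup_{0\neq u\in\ones^\perp}\frac{\|u\|^2}{u^\top\mL u} = \frac{n}{\displaystyle\min_{0\neq u\in\ones^\perp}\frac{u^\top\mL u}{\|u\|^2}}.
\]

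Finally, the minimal Rayleigh quotient of $\mL$ over $\ones^\perp$ is, by Courant--Fischer, the smallest eigenvalue of $\mL$ restricted to the invariant subspace $\ones^\perp$. Since $\ones$ spans the eigenspace for the eigenvalue $0=\lambda_n(\mL)$, this minimum equals $\lambda_{n-1}(\mL) = \ac(\cG)$, which gives $\beta(\cG) = n/\ac(\cG)$. I would not expect a genuine obstacle here; the only points needing care are bookkeeping of constants: the factor $\tfrac12$ from the unordered-pair convention (which is what produces the $n$ rather than $2n$), and the passage between ``smallest $\beta$'' and the supremum of reciprocal Rayleigh quotients. The latter is clean because the supremum is attained at the Fiedler eigenvector, so no degenerate limiting behaviour can arise.
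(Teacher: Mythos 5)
Your proof is correct, and while it rests on the same underlying spectral fact as the paper's proof, the route is genuinely different in its mechanics. The paper recasts the left-hand side of \eqref{eq:hdgugvej} as the quadratic form of the complete-graph Laplacian $\tilde{\mL} = n\mI - \ones\ones^\top$, proves a small auxiliary lemma that the spectrum of $\tilde{\mL}$ is $\{0,n,n,\dots,n\}$, and then compares the two quadratic forms termwise in a common orthonormal eigenbasis (which exists because any eigenbasis of $\mL$ containing $\ones/\sqrt{n}$ is automatically an eigenbasis of $\tilde{\mL}$). You instead reuse identity \eqref{Eq: equality}, already established for Lemma~\ref{L: rel measures}, to rewrite the left-hand side as $n\|\bar{c}\ones - x\|^2$, then pass to the orthogonal decomposition $x = \bar{c}\ones + u$ with $u \in \ones^\perp$ and invoke the Courant--Fischer characterization of $\lambda_{n-1}(\mL)$ as the minimal Rayleigh quotient over $\ones^\perp$. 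Both arguments ultimately reduce the claim to identifying $\ac(\cG)$ as the relevant extremal eigenvalue, but yours has a concrete advantage: it treats the ``smallest such $\beta$'' clause head-on. The paper's proof asserts $\lambda_{n-1}(\beta\mL)=n$ as ``clear'' --- which, since $\beta$ there denotes the optimal constant, is really the minimality claim in disguise --- and then only verifies that the inequality holds for that value; your formula $\beta(\cG) = n\big/\min_{0\neq u\in\ones^\perp}\bigl(u^\top\mL u/\|u\|^2\bigr)$, together with the observation that the extremum is attained at the Fiedler vector, establishes in one stroke both that $n/\ac(\cG)$ suffices and that no smaller constant does. Your bookkeeping is also right: the factor $\tfrac12$ from the unordered-pair convention is exactly what turns \eqref{Eq: equality} into $\sum_{(i,j)}(x_j-x_i)^2 = n\|\bar{c}\ones - x\|^2$, producing $n$ rather than $2n$ in the final answer.
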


The following theorem gives  a complexity result for (standard) randomized gossip. Our analysis is dual in nature (see the Appendix).

\begin{thm}\label{thm:G} Consider the randomized gossip algorithm (Algorithm~\algG) with uniform edge-selection probabilities: $p_e=1/m$. Then:
\[\E{D(y^*) - D(y^{k}) } \leq \left(1-\frac{\ac(\cG)}{2m }\right)^k[D(y^*) - D(y^{0}) ]. \]
\end{thm}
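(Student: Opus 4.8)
The plan is to establish a one-step expected contraction of the dual suboptimality and then iterate. First I would pin down the exact per-step progress of the dual iterate produced by Algorithm~\algG\ (dual form). Since $b=0$, substituting the exact line-search stepsize $\lambda^t = (x_j^t - x_i^t)/2$ into the identity of Lemma~\ref{lem:98y98yss} gives, whenever edge $e=(i,j)$ is the one sampled at iteration $t$,
\[
D(y^{t+1}) - D(y^t) = \tfrac{1}{4}(x_i^t - x_j^t)^2 .
\]
Averaging over the uniform choice of edge (each selected with probability $1/m$) then yields
\[
\E{D(y^{t+1}) - D(y^t) \mid y^t} = \frac{1}{4m}\sum_{(i,j)\in\cE}(x_i^t - x_j^t)^2 .
\]

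The heart of the argument is to lower bound this expected increase by a multiple of the gap $D(y^*) - D(y^t)$. Here I would chain together three facts already available in the excerpt. By \eqref{Eq: equality}, together with the footnote convention that $\sum_i\sum_j$ double-counts unordered pairs, $\|\bar{c}\ones - x^t\|^2 = \tfrac{1}{n}\sum_{(i,j)}(x_j^t - x_i^t)^2$. Applying the definition of $\beta(\cG)$ in \eqref{eq:hdgugvej} in the direction that converts a sum over all pairs into a sum over edges, and using Lemma~\ref{lem:beta} (so $\beta(\cG) = n/\ac(\cG)$), gives
\[
\sum_{(i,j)\in\cE}(x_i^t - x_j^t)^2 \;\geq\; \frac{n}{\beta(\cG)}\,\|\bar{c}\ones - x^t\|^2 \;=\; \ac(\cG)\,\|\bar{c}\ones - x^t\|^2 .
\]
Finally, the primal–dual correspondence \eqref{eq:99d8gds} identifies $\|\bar{c}\ones - x^t\|^2 = 2[D(y^*) - D(y^t)]$, so the expected increase is at least $\tfrac{\ac(\cG)}{2m}[D(y^*) - D(y^t)]$.

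Rearranging this as
\[
D(y^*) - \E{D(y^{t+1}) \mid y^t} \leq \left(1 - \frac{\ac(\cG)}{2m}\right)\bigl[D(y^*) - D(y^t)\bigr],
\]
taking total expectations and applying the tower property $k$ times delivers the claimed geometric rate $\left(1-\tfrac{\ac(\cG)}{2m}\right)^k[D(y^*) - D(y^0)]$.

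The step I expect to demand the most care is the middle chain of inequalities linking the edge sum to the suboptimality gap. The subtlety is purely bookkeeping but easy to get wrong: one must apply the double-counting convention from the footnote consistently, and use \eqref{eq:hdgugvej} in the correct direction so that the global pair-sum is bounded below by the edge-sum with precisely the connectivity constant $\ac(\cG)$ appearing (rather than its reciprocal or an extra factor of $n$). Everything else is a direct computation from Lemma~\ref{lem:98y98yss} and a routine expectation over the uniform edge distribution.
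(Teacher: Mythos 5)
Your proposal is correct and follows essentially the same route as the paper: the exact per-step gain $\tfrac{1}{4}(x_i^t-x_j^t)^2$ from Lemma~\ref{lem:98y98yss} (packaged in the paper as Lemma~\ref{lem:09u9djh9ffs}), the lower bound of the expected gain via \eqref{Eq: equality}, \eqref{eq:hdgugvej} and Lemma~\ref{lem:beta} (which the paper packages as Lemma~\ref{L: D bound by beta}), the identification \eqref{eq:99d8gds}, and the tower-property iteration. Your bookkeeping of the double-counting convention and the direction of \eqref{eq:hdgugvej} is exactly right, so there is nothing to fix.
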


Theorem~\ref{thm:G}  yields the complexity estimate
${\cal O}\left(\frac{2m}{\ac(\cG)} \log(1/\epsilon)\right)$,
which exactly matches the complexity result obtained from the primal analysis~\cite{SDA}. Hence, the primal and dual analyses give the same rate. 

Randomized coordinate descent methods were first analyzed in \cite{Leventhal:2008:RMLC, Nesterov:2010RCDM, UCDC, PCDM}. For a recent treatment, see \cite{ALPHA, ESO}. Duality in randomized coordinate descent methods was studied in \cite{Shalev-Shwartz2012, QUARTZ}. Acceleration was studied in \cite{lee2013efficient, APPROX, allen2016even}. These methods extend to nonsmooth problems of various flavours \cite{SPCDM, SSP}.

With all of this preparation, we are now ready to formulate and analyze our private gossip algorithms; we do so in Section~\ref{sec:Private}.

\section{Private Gossip Algorithms} \label{sec:Private}

In this section we introduce three novel private gossip algorithms, complete with iteration complexity guarantees. In Section~\ref{sec:B} we protect privacy via a  binary  communication protocol. In Section~\ref{sec:E} we communicate more: besides binary information, we allow for the communication of a bound on the gap, introducing the $\epsilon$-gap oracle. In Section~\ref{sec: noise} we introduce a privacy-protection mechanism based on a procedure we call {\em controlled noise insertion}.

\subsection{Private Gossip via Binary Oracle} 
\label{sec:B}

We now present the gossip algorithm with Binary Oracle in detail and provide theoretical convergence guarantee. The information exchanged between sampled nodes is constrained to a single bit, describing which of the nodes has the higher value. As mentioned earlier, we only present the conceptual idea, not how exactly would the oracle be implemented within a secure multiparty protocol between participating nodes \cite{cramer2015secure}.

We will first introduce dual version of the algorithm.

\setcounter{algorithm}{\algB-1}
\begin{algorithm}[H]
\textbf{Input: }{vector of private values $c\in \R^n$, sequence of positive stepsizes $\{\lambda^t\}_{t=0}^{\infty}$}\\
{\textbf {Initialize:}} Set $y^0=0\in \R^m$, $x^0=c$ \\
\For {$t= 0,1,\dots, k-1$} {
\begin{enumerate}
\item Choose node $e = (i,j)\in \cE$ uniformly at random
\item Update the dual variable: \[y^{t+1} = \begin{cases} y^t + \lambda^t f_e, &\quad  x^t_i< x^t_j,\\
 y^t - \lambda^t f_e, & \quad x^t_i\geq x^t_j.
\end{cases}
 \] 

\item Set 
\begin{eqnarray*}
x^{t+1}_i &=& \begin{cases} x^t_i + \lambda^t, &\quad  x^t_i < x^t_j,\\
 x^t_i - \lambda^t , & \quad x^t_i\geq x^t_j.
\end{cases}
\\ 
 x^{t+1}_j &=& \begin{cases} x^t_j - \lambda^t, &\quad  x^t_i < x^t_j,\\
 x^t_j + \lambda^t  & \quad x^t_i\geq x^t_j.
\end{cases} 
\\
  x^{t+1}_l&=&x^t_l \qquad l\not\in\{i,j\}
\end{eqnarray*} 
\end{enumerate}
 }
\textbf{return} $y^k$
\caption{(Dual form)}
\end{algorithm}

The update of primal variables above is equivalent to set $x^{t+1}$ as primal point corresponding to dual iterate: $x^{t+1} = c+ \bA^\top y^{t+1} = x^t + \bA^\top (y^{t+1}-y^t)$. In other words, the primal iterates $\{x^t\}$ associated with the dual iterates $\{y^t\}$ can be written in the form:
\[x^{t+1} = \begin{cases} x^t + \lambda^t \bA_{e:}^\top, &\quad  x^t_i < x^t_j,\\
 x^t - \lambda^t \bA_{e:}^\top, & \quad x^t_i\geq x^t_j.
\end{cases} 
 \] 
It is easy to verify that due to the structure of $\bA$, this is equivalent to the updates above.

Since the evolution of dual variables $\{y^k\}$ serves only the purpose of the analysis, the method can be written in the primal-only form as follows:

\setcounter{algorithm}{\algB-1}
\begin{algorithm}[H]
\textbf{Input: }{vector of private values $c\in \R^n$, sequence of positive stepsizes $\{\lambda^t\}_{t=0}^{\infty}$}\\
{\textbf {Initialize:}} Set $x^0=c$ \\
\For {$t= 0,1,\dots, k-1$} {
\begin{enumerate}
\item Choose node $e = (i,j)\in \cE$ uniformly at random

\item Set 
\begin{eqnarray*}
x^{t+1}_i &=& \begin{cases} x^t_i + \lambda^t, &\quad  x^t_i < x^t_j,\\
 x^t_i - \lambda^t , & \quad x^t_i\geq x^t_j.
\end{cases}
\\ 
 x^{t+1}_j &=& \begin{cases} x^t_j - \lambda^t, &\quad  x^t_i < x^t_j,\\
 x^t_j + \lambda^t  & \quad x^t_i\geq x^t_j.
\end{cases} 
\\
  x^{t+1}_l&=&x^t_l \qquad l\not\in\{i,j\}
\end{eqnarray*}
\end{enumerate}
 }
\textbf{return} $x^k$
\caption{(Primal form)}
\end{algorithm}

Given a sequence of stepsizes $\{\lambda^t\}$, it will be convenient to define $\alpha^k\eqdef \sum_{t=0}^{k}\lambda^t$ and $\beta^k \eqdef \sum_{t=0}^{k}\left(\lambda^t\right)^2$.
In the following theorem, we study the convergence of the quantity
\begin{equation} \label{eq: L def}
L^t\eqdef \frac{1}{m}\sum_{e=(i,j)\in \cE} |x^t_i - x^t_j|.
\end{equation} 

\begin{thm}\label{thm:jhs988sh}  For all $k\geq 1$ we have
\begin{equation}
\min_{t=0,1,\dots,k} \E{L^t} \leq \sum_{t=0}^{k} \frac{\lambda^t}{\alpha^k}\E{L^t} \leq U^k \eqdef \frac{D(y^*)-D(y^0)}{\alpha^k} + \frac{\beta^k}{\alpha^k}.\label{Eq: binary general rate}
\end{equation}
Moreover:
\begin{enumerate}
\item[(i)] If we set $\lambda^t = \lambda^0>0$ for all $t$, then  $U^k = \frac{D(y^*)-D(y^0)}{\lambda^0 (k+1)} + \lambda^0$.
\item[(ii)] Let $R$ be any constant such that $R\geq D(y^*)-D(y^0)$. If we fix  $k\geq 1$, then the choice of stepsizes $\{\lambda^0,\dots,\lambda^k\}$ which minimizes $U^k$ correspond to the constant stepsize rule $\lambda^t = \sqrt{\tfrac{R}{k+1}}$ for all $t=0,1,\dots,k$, and $U^k = 2\sqrt{\tfrac{R}{k+1}}$.
% \item Let $\lambda^k = \tfrac{\lambda}{\sqrt{k+1}}$ for some $\lambda>0$ and all $k\geq 0$. Then  $U_k = $
\item[(iii)] If we set $^t=a/\sqrt{t+1}$ for all $t=0,1,\dots,k$, then 
\begin{equation*}
U^k\leq \frac{D(y^*)-D(y^0)+a^2 \left(\log(k+3/2)+\log(2) \right)}{2a\left( \sqrt{k+2} -1\right)}=O\left(\frac{\log(k)}{\sqrt{k}}\right)
\end{equation*}
\end{enumerate}
\end{thm}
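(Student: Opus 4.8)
The plan is to track the growth of the dual objective $D(y^t)$ under the binary-oracle updates and convert it into control of $L^t$. The starting point is Lemma~\ref{lem:98y98yss}, which I would apply separately to the two branches of the update. In the branch $x^t_i<x^t_j$ we take $\lambda=+\lambda^t$, and since $x^t_i-x^t_j=-|x^t_i-x^t_j|$, formula \eqref{eq:89g9s8guffxx} gives $D(y^{t+1})-D(y^t)=\lambda^t|x^t_i-x^t_j|-(\lambda^t)^2$; in the branch $x^t_i\geq x^t_j$ we take $\lambda=-\lambda^t$, and since now $x^t_i-x^t_j=|x^t_i-x^t_j|$, the same substitution yields the identical expression $\lambda^t|x^t_i-x^t_j|-(\lambda^t)^2$. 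The crucial point — and the step I expect to require the most care — is precisely this sign bookkeeping: the binary oracle discards the magnitude of the gap, yet because the step direction is always chosen to increase the dual objective, both branches collapse to a single formula.

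Having established that identity, I would take expectation over the uniformly chosen edge $e$ conditioned on $x^t$. Since each edge is selected with probability $1/m$ and $L^t=\tfrac1m\sum_{e=(i,j)}|x^t_i-x^t_j|$, this gives
\[\E{D(y^{t+1})-D(y^t)\mid x^t}=\lambda^t L^t-(\lambda^t)^2.\]
Taking full expectation and telescoping over $t=0,\dots,k$ produces $\E{D(y^{k+1})}-D(y^0)=\sum_{t=0}^k\lambda^t\E{L^t}-\beta^k$. Using optimality $D(y^{k+1})\leq D(y^*)$ and rearranging gives $\sum_{t=0}^k\lambda^t\E{L^t}\leq D(y^*)-D(y^0)+\beta^k$; dividing by $\alpha^k$ yields the middle inequality of \eqref{Eq: binary general rate}, and bounding the convex combination $\sum_t(\lambda^t/\alpha^k)\E{L^t}$ below by its minimum gives the left inequality.

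For the special cases, part (i) is immediate substitution of $\alpha^k=(k+1)\lambda^0$ and $\beta^k=(k+1)(\lambda^0)^2$. For part (ii) I would first replace $D(y^*)-D(y^0)$ by the upper bound $R$, then minimize $(R+\beta^k)/\alpha^k$ in two stages: for fixed $\alpha^k$ the QM–AM (Cauchy–Schwarz) inequality shows $\beta^k\geq(\alpha^k)^2/(k+1)$ with equality exactly at the constant-stepsize choice, and then optimizing the resulting $R/\alpha^k+\alpha^k/(k+1)$ over $\alpha^k$ gives $\alpha^k=\sqrt{R(k+1)}$, hence $\lambda^t=\sqrt{R/(k+1)}$ and $U^k=2\sqrt{R/(k+1)}$.

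For part (iii) with $\lambda^t=a/\sqrt{t+1}$, the task reduces to two integral estimates. I would lower-bound $\alpha^k=a\sum_{s=1}^{k+1}s^{-1/2}$ by the standard comparison $\sum_{s=1}^{k+1}s^{-1/2}\geq\int_1^{k+2}u^{-1/2}\,du=2(\sqrt{k+2}-1)$, and upper-bound the harmonic sum $\beta^k=a^2\sum_{s=1}^{k+1}s^{-1}$ by the sharper midpoint (Hermite–Hadamard) comparison $\tfrac1s\leq\int_{s-1/2}^{s+1/2}u^{-1}\,du$, whose sum telescopes to $\int_{1/2}^{k+3/2}u^{-1}\,du=\log(k+3/2)+\log 2$; this is what produces the exact constant in the stated bound rather than the looser $1+\log(k+1)$. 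Substituting both estimates into $U^k=(D(y^*)-D(y^0)+\beta^k)/\alpha^k$ gives the displayed inequality, and the $O(\log(k)/\sqrt{k})$ rate follows since the numerator is $O(\log k)$ and the denominator is $\Theta(\sqrt{k})$.
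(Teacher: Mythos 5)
Your proposal is correct and follows essentially the same route as the paper: the same application of Lemma~\ref{lem:98y98yss} with the sign bookkeeping collapsing both branches to $\lambda^t|x^t_i-x^t_j|-(\lambda^t)^2$, the same conditional expectation and telescoping against $D(y^*)$, and the same integral comparisons (including the midpoint/convexity bound $\tfrac1s\leq\int_{s-1/2}^{s+1/2}u^{-1}\,du$) for part (iii). The only cosmetic difference is in part (ii), where you fix $\alpha^k$ and apply Cauchy--Schwarz to get $\beta^k\geq(\alpha^k)^2/(k+1)$ before optimizing over $\alpha^k$, while the paper's Lemma~\ref{L: optimal stepsizes binary} parametrizes $\lambda=rx$ with $\|x\|=1$ and optimizes over $x$ then $r$ --- these are equivalent two-stage arguments yielding the identical constant-stepsize optimum.
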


The part \textit{(ii)} of Theorem~\ref{thm:jhs988sh} is useful in the case if we know exactly the number of iterations before running the algorithm, providing in a sense optimal stepsizes and rate $O(1/\sqrt{k})$. However, this might not be the case in practice. Therefore part \textit{(iii)} is also relevant, which yields the rate $O(\log(k)/\sqrt{k})$. These bounds are significantly weaker than the standard bound in Theorem~\ref{thm:G}. This should not be surprising though, as we use significantly less information than the Standard Gossip algorithm.

Nevertheless, there is a potential gap in terms of what rate can be practically achievable. The following theorem can be seen as a form of a bound on what convergence rate is possible to attain with the Binary Oracle. However, this is attained with access to very strong information needed to set the sequence of stepsizes $\lambda^t$, likely unrealistic in any application. This result points at a gap in the analysis which we leave open. We do not know whether the sublinear convergence rate in Theorem~\ref{thm:jhs988sh} is necessary or improvable without additional information about the system.

\begin{thm}
\label{thm: stepsize_adaptive}
For Algorithm~\algB\ with stepsizes chosen in iteration $t$ adaptively to the current values of $x^t$ as $\lambda^t = \frac{1}{2m}\sum_{e\in \cE}|x_i^t-x_j^t|$, we have
\begin{equation*}
\E{\|\overline{c}\ones-x^{k} \|^2}
\leq
\left(1-\frac{\ac(\cG)}{2m^2}\right)^k\|\overline{c}\ones-x^{0} \|^2
\end{equation*}
\end{thm}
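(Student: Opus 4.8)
The plan is to carry out the entire argument in the dual space, exactly as in the proof of Theorem~\ref{thm:G}, and only translate back to the primal at the very end via the identity \eqref{eq:99d8gds}. The starting point is the exact one-step dual increase from Lemma~\ref{lem:98y98yss}. When edge $e=(i,j)$ is sampled, the algorithm takes the update $y^{t+1}=y^t\pm\lambda^t f_e$ with the sign chosen according to whether $x_i^t<x_j^t$ or $x_i^t\ge x_j^t$. First I would observe that in \emph{both} branches the formula \eqref{eq:89g9s8guffxx} collapses to the same sign-independent expression
\begin{equation*}
D(y^{t+1})-D(y^t)=\lambda^t|x_i^t-x_j^t|-(\lambda^t)^2,
\end{equation*}
since the sign of the coefficient is always chosen so that $-\lambda(x_i^t-x_j^t)=\lambda^t|x_i^t-x_j^t|$.

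Next I would take the expectation conditioned on $x^t$ over the uniformly random edge (probability $1/m$ each), which gives, writing $S^t\eqdef\sum_{e=(i,j)\in\cE}|x_i^t-x_j^t|$,
\begin{equation*}
\E{D(y^{t+1})-D(y^t)\mid x^t}=\frac{\lambda^t}{m}S^t-(\lambda^t)^2.
\end{equation*}
The adaptive stepsize $\lambda^t=\tfrac{1}{2m}S^t$ is precisely the maximizer of the right-hand side over $\lambda^t$, and substituting it yields the clean expression $\E{D(y^{t+1})-D(y^t)\mid x^t}=(S^t)^2/(4m^2)$. This is the step that both explains the particular choice of stepsize and produces the factor $1/m^2$ in the rate.

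The remaining ingredient, and the only genuinely nontrivial one, is to lower bound $(S^t)^2$ in terms of the dual suboptimality $r^t\eqdef D(y^*)-D(y^t)$. Here I would invoke the Fiedler-type inequality \eqref{Eq: s lower bound}, which gives $S^t\ge\sqrt{\ac(\cG)}\,\|\overline{c}\ones-x^t\|$, together with the duality identity \eqref{eq:99d8gds}, which gives $\|\overline{c}\ones-x^t\|^2=2r^t$. Combining these, $(S^t)^2\ge 2\ac(\cG)\,r^t$, hence
\begin{equation*}
\E{r^{t+1}\mid x^t}=r^t-\E{D(y^{t+1})-D(y^t)\mid x^t}\le\left(1-\frac{\ac(\cG)}{2m^2}\right)r^t.
\end{equation*}
Taking total expectations, unrolling the recursion over $t=0,\dots,k-1$, and finally translating $r^k$ and $r^0$ back to the primal via \eqref{eq:99d8gds} (i.e. $r^t=\tfrac12\|\overline{c}\ones-x^t\|^2$) yields the claimed bound. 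The main obstacle is really just the $S^t$-to-$r^t$ conversion supplied by \eqref{Eq: s lower bound}; everything else is a direct computation, and the linear rate then follows from the standard tower-property-plus-induction scheme already used for Theorem~\ref{thm:G}.
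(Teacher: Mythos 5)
Your proposal is correct and is essentially the paper's own argument: both proofs use the fact that the adaptive stepsize exactly maximizes the expected one-step dual increase, yielding $\left(\sum_{e\in\cE}|x_i^t-x_j^t|\right)^2/(4m^2)$, and both then pass to the consensus distance through the algebraic-connectivity chain $\left(\sum_{e\in\cE}|x_i^t-x_j^t|\right)^2\ge\sum_{e\in\cE}(x_i^t-x_j^t)^2\ge\ac(\cG)\|\bar{c}\ones-x^t\|^2$. The only difference is bookkeeping: you run the recursion directly on the dual suboptimality $D(y^*)-D(y^t)$ using the packaged inequality \eqref{Eq: s lower bound}, while the paper derives that chain inline and obtains the same recursion on $\E{\|\bar{c}\ones-x^t\|^2}$ via a sandwich of upper and lower bounds on $D(y^{t+1})-D(y^t)$; since by \eqref{eq:99d8gds} these quantities coincide, the two proofs are identical in substance.
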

Comparing Theorem~\ref{thm: stepsize_adaptive} with the result for standard Gossip in Theorem~\ref{thm:G}, the convergence rate is worse by factor of $m$, which is the price we pay for the weaker oracle.

An alternative to choosing adaptive stepsizes is the use of adaptive probabilities \cite{AdaSDCA}. We leave such a study to future work.

\subsection{Private Gossip via  $\epsilon$-Gap Oracle} 
\label{sec:E}

Here we present the gossip algorithm with $\epsilon$-Gap Oracle in detail and provide theoretical convergence guarantees. The information exchanged between sampled nodes is restricted to be one of three cases, based on difference in values on sampled nodes. As mentioned earlier, we only present the conceptual idea, not how exactly would the oracle be implemented within a secure multiparty protocol between participating nodes \cite{cramer2015secure}.

We will first introduce dual version of the algorithm.

\setcounter{algorithm}{\algE-1}
\begin{algorithm}[H]
\textbf{Input: }{vector of private values $c\in \R^n$; error tolerance $\epsilon>0$}\\
{\textbf {Initialize:}} Set $y^0 = 0 \in \R^m$; $x^0=c$. \\
\For {$t= 0,1,\dots, k-1$} {
\begin{enumerate}
\item Choose node $e = (i,j)\in \cE$ uniformly at random
\item Update the dual variable: \[y^{t+1} = \begin{cases} y^t +\frac{ \epsilon}{2} f_e, &\quad  x^t_i-x^t_j < -\epsilon\\
 y^t - \frac{ \epsilon}{2} f_e, & \quad x^t_j-x^t_i < -\epsilon,\\
y^t, & \quad \text{otherwise.}
\end{cases}
 \] 

\item  If $x^t_i \leq x^t_j -  \epsilon$ then $x^{t+1}_i = x^t_i + \frac{ \epsilon}{2}$ and $x^{t+1}_j = x^t_j - \frac{ \epsilon}{2}$ 
\item  If $x^t_j \leq x^t_i -  \epsilon$ then $x^{t+1}_i = x^t_i - \frac{ \epsilon}{2}$ and $x^{t+1}_j = x^t_j + \frac{ \epsilon}{2}$ 

\end{enumerate}
\label{GapOracle}
}
\textbf{return} $x^k$
\caption{(Dual form)}
\end{algorithm}

Note that the primal iterates $\{x^t\}$ associated with the dual iterates $\{y^t\}$ can be written in the form:
$$
x^{t+1} = \begin{cases} x^t + \frac{ \epsilon}{2} \bA_{e:}^\top, &\quad  x^t_i-x^t_j < -\epsilon\\
 x^t - \frac{ \epsilon}{2} \bA_{e:}^\top, & \quad x^t_j-x^t_i < -\epsilon,\\
x^t, & \quad \text{otherwise.}
\end{cases}
$$
The above is equivalent to setting
$x^{t+1} = x^t + \bA^\top (y^{t+1}-y^t)=c+ \bA^\top y^{t+1}$. 

Since the evolution of dual variables $\{y^t\}$ serves only the purpose of the analysis, the method can be written in the primal-only form as follows:

\setcounter{algorithm}{\algE-1}
\begin{algorithm}[H]
\textbf{Input: }{vector of private values $c\in \R^n$; error tolerance $\epsilon>0$}\\
{\textbf {Initialize:}} Set $x^0=c$. \\
\For {$t= 0,1,\dots, k-1$} {
\begin{enumerate}
\item Set $x^{t+1} = x^t$
\item Choose node $e = (i,j)\in \cE$ uniformly at random
\item  If $x^t_i \leq x^t_j -  \epsilon$ then $x^{t+1}_i = x^t_i + \frac{ \epsilon}{2}$ and $x^{t+1}_j = x^t_j - \frac{ \epsilon}{2}$ 
\item  If $x^t_j \leq x^t_i -  \epsilon$ then $x^{t+1}_i = x^t_i - \frac{ \epsilon}{2}$ and $x^{t+1}_j = x^t_j + \frac{ \epsilon}{2}$ 
\end{enumerate}
 }
\textbf{return} $x^k$
\caption{(Primal form)}
\end{algorithm}

Before stating the convergence result, let us define a quantity the convergence will naturally depend on. For each edge $e=(i,j)\in \cE$ and iteration $t \geq 0$ define the random variable \[\Delta^t_e(\epsilon) \quad \eqdef \quad \begin{cases} 1, \qquad |x^t_i-x^t_j|\geq \epsilon,\\
0, \qquad \text{otherwise.}\end{cases}\]
Moreover, let 
\begin{equation}
\label{eq:delta_k}
\Delta^t (\epsilon)\quad \eqdef \quad \frac{1}{m}\sum_{e\in \cE} \Delta^t_e(\epsilon).
\end{equation}

The following Lemma bounds the expected increase in dual function value in each iteration.

%\jakub{In Lemma~\ref{lem:09ys09y9ss}, I think, we could leave the conditional expectation, and chain it to get a result depending on the final value $\Delta^k(\epsilon)$ (instead of average), assuming we would know how to bound $\E{ \Delta^{t+1} | y^t }$.
%I think it would be worth investigating if we had more time, as we could have stronger result.}
\begin{lem}\label{lem:09ys09y9ss} For all $t \geq 0$ we have
$\E{D(y^{t+1})-D(y^t) } \geq \frac{\epsilon^2}{4} \E{\Delta^t(\epsilon)}.$
\end{lem}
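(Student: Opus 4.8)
The plan is to reduce everything to the exact per-edge dual-increase formula from Lemma~\ref{lem:98y98yss} and then average over the uniform edge sampling. First I would condition on the iterate $x^t$ (equivalently, on the history up to iteration $t$) and examine what happens when a fixed edge $e=(i,j)$ is selected, which occurs with probability $1/m$. The update sets $y^{t+1}=y^t+\lambda f_e$ with $\lambda\in\{+\tfrac{\epsilon}{2},-\tfrac{\epsilon}{2},0\}$ depending on the sign and magnitude of $x^t_i-x^t_j$, so Lemma~\ref{lem:98y98yss} gives $D(y^{t+1})-D(y^t)=-\lambda(x^t_i-x^t_j)-\lambda^2$ exactly.

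Next I would carry out the three-case analysis. When $x^t_i-x^t_j\le -\epsilon$ we take $\lambda=\tfrac{\epsilon}{2}$, so $-\lambda(x^t_i-x^t_j)=\tfrac{\epsilon}{2}|x^t_i-x^t_j|$ and the increase equals $\tfrac{\epsilon}{2}|x^t_i-x^t_j|-\tfrac{\epsilon^2}{4}$; symmetrically, when $x^t_j-x^t_i\le-\epsilon$ we take $\lambda=-\tfrac{\epsilon}{2}$ and obtain the same expression. In both of these cases $|x^t_i-x^t_j|\ge\epsilon$, so the increase is at least $\tfrac{\epsilon}{2}\cdot\epsilon-\tfrac{\epsilon^2}{4}=\tfrac{\epsilon^2}{4}$. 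In the remaining case $\lambda=0$ and the increase is $0$. Since the first two cases together are exactly the event $\{|x^t_i-x^t_j|\ge\epsilon\}=\{\Delta^t_e(\epsilon)=1\}$ while the third is $\{\Delta^t_e(\epsilon)=0\}$, this establishes the pointwise bound $D(y^{t+1})-D(y^t)\ge\tfrac{\epsilon^2}{4}\Delta^t_e(\epsilon)$ whenever edge $e$ is drawn.

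Finally I would take expectations. Averaging the pointwise bound over the uniform choice of edge, conditionally on $x^t$, gives
\[
\E{D(y^{t+1})-D(y^t)\mid x^t}\ \ge\ \frac{1}{m}\sum_{e\in\cE}\frac{\epsilon^2}{4}\Delta^t_e(\epsilon)=\frac{\epsilon^2}{4}\,\Delta^t(\epsilon),
\]
using the definition~\eqref{eq:delta_k} of $\Delta^t(\epsilon)$. Taking total expectation via the tower property then yields the claim.

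The only delicate point is the boundary behaviour at $|x^t_i-x^t_j|=\epsilon$: the indicator $\Delta^t_e(\epsilon)$ is defined with a weak inequality $\ge\epsilon$, so to keep the pointwise bound valid I must ensure the oracle performs the averaging update (not the null update) on this boundary, consistent with the weak inequalities in steps~3--4 of the primal form. With that convention the increase at the boundary is exactly $\tfrac{\epsilon^2}{4}$, matching $\tfrac{\epsilon^2}{4}\Delta^t_e(\epsilon)$, and no slack is lost. Everything else is a direct substitution into Lemma~\ref{lem:98y98yss}, so I expect this boundary bookkeeping to be the main (and only minor) obstacle rather than any substantive difficulty.
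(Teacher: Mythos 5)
Your proposal is correct and follows essentially the same route as the paper's proof: apply Lemma~\ref{lem:98y98yss} to get the exact per-edge dual increase, do the three-case analysis to obtain the pointwise bound $D(y^{t+1})-D(y^t)\ge\tfrac{\epsilon^2}{4}\Delta^t_e(\epsilon)$, and then take expectations over the edge selection and the history. Your remark on the boundary case $|x^t_i-x^t_j|=\epsilon$ is a valid observation that the paper handles implicitly by stating its case analysis with weak inequalities (consistent with the primal form of Algorithm~\algE\ and the definition of $\Delta^t_e(\epsilon)$), even though the dual form of the algorithm is written with strict ones.
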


Our complexity result will be expressed in terms of the quantity:
\begin{equation} \label{eq: delta def}
\delta^k(\epsilon)\quad \eqdef \quad  \E{\frac{1}{k}\sum_{t=0}^{k-1}\Delta^t(\epsilon)} = \frac{1}{k}\sum_{t=0}^{k-1}\E{\Delta^t (\epsilon)}.
\end{equation}

\begin{thm}\label{thm:09y09s9ffs} For all $k\geq 1$ we have \[\delta^k(\epsilon) \leq \frac{4\left(D(y^*)-D(y^0)\right)}{k\epsilon^2}.\]
\end{thm}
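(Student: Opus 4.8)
The plan is to run a standard telescoping argument built directly on top of Lemma~\ref{lem:09ys09y9ss}, which already supplies the per-iteration lower bound on the expected dual increase. First I would sum the inequality of Lemma~\ref{lem:09ys09y9ss} over $t=0,1,\dots,k-1$, obtaining
\[
\sum_{t=0}^{k-1}\E{D(y^{t+1})-D(y^t)} \geq \frac{\epsilon^2}{4}\sum_{t=0}^{k-1}\E{\Delta^t(\epsilon)}.
\]
The left-hand side telescopes. Since $y^0$ is deterministic (initialized to $0$), the telescoped sum equals $\E{D(y^{k})}-D(y^0)$.

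Next I would invoke dual optimality of $y^*$: because $y^*$ maximizes the concave dual objective $D$ over $\R^m$, every iterate satisfies $D(y^k)\le D(y^*)$ pointwise, and hence $\E{D(y^{k})}\le D(y^*)$. Combining this with the telescoped identity gives
\[
\frac{\epsilon^2}{4}\sum_{t=0}^{k-1}\E{\Delta^t(\epsilon)} \leq \E{D(y^{k})}-D(y^0) \leq D(y^*)-D(y^0).
\]

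Finally I would divide through by $\tfrac{\epsilon^2}{4}\cdot k$ and recognize the left-hand sum, after dividing by $k$, as exactly the quantity $\delta^k(\epsilon)=\tfrac{1}{k}\sum_{t=0}^{k-1}\E{\Delta^t(\epsilon)}$ defined in \eqref{eq: delta def}. This yields the claimed bound
\[
\delta^k(\epsilon) \leq \frac{4\left(D(y^*)-D(y^0)\right)}{k\epsilon^2}.
\]

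I do not expect any genuine obstacle here; the entire content of the result is carried by Lemma~\ref{lem:09ys09y9ss}, and the remaining steps are routine. The only points requiring a moment of care are that the telescoping must be performed on the expectations (so that the intermediate random terms cancel in expectation) and that the bound $\E{D(y^{k})}\le D(y^*)$ is justified by the fact that $D(y^k)\le D(y^*)$ holds deterministically for every realization, not merely in expectation. Once these are noted, the conclusion follows immediately.
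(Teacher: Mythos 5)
Your proof is correct and follows essentially the same route as the paper: telescope the expected dual increases, bound $\E{D(y^k)}$ by $D(y^*)$ using pointwise dual optimality, and apply Lemma~\ref{lem:09ys09y9ss} to each term. The paper's proof is just this argument written in the reverse order, so there is nothing to add.
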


Note that if $\Delta^k(\epsilon)=0$, it does not mean the primal iterate $x^k$ is optimal. This only implies that the values of all pairs of directly connected nodes are differ by less than $\epsilon$.

\subsection{Private Gossip via Controlled Noise Insertion}
\label{sec: noise}

In this section, we present the Gossip algorithm with Controlled Noise Insertion. As mentioned in the introduction, the approach is similar the technique proposed in \cite{manitara2013privacy, mo2017privacy}. Those works, however, address only algorithms in the synchronous setting, while our work is the first to use this idea in the asynchronous setting. Unlike the above, we provide finite time convergence guarantees and allow each node to add the noise differently, which yields a stronger result.

In our approach, each node adds noise to the computation independently of all other nodes. However, the noise added is correlated between iterations for each node. We assume that every node owns two parameters --- initial magnitude of the generated noise $\sigma_i^2$ and rate of decay of the noise $\phi_i$. The node inserts noise $w_i^{t_i}$ to the system every time that an edge corresponding to the node was chosen, where variable $t_i$ carries an information how many times the noise was added to the system in the past by node $i$. Thus, if we denote by $t$ the current number of iterations, we have $\sum_{i=1}^nt_i = 2t$.

In order to ensure convergence to the optimal solution, we need to choose a specific structure of the noise in order to guarantee the mean of the values $x_i$ converges to the initial mean. In particular, in each iteration a node $i$ is selected, we subtract the noise that was added last time, and add a fresh noise with smaller magnitude:
\[
w_i^{t_i} = \phi_i^{t_i}v_i^{t_i}-\phi_i^{t_i-1}v_i^{t_i-1},
\] 
where $0 \leq \phi_i<1, v_i^{-1} = 0$ and $v_i^{t_i}\sim N(0,\sigma_i^2)$ for all iteration counters $k_i \geq 0$ is independent to all other randomness in the algorithm. This ensures that all noise added initially is gradually withdrawn from the whole network.

After the addition of noise, a standard Gossip update is made, which sets the values of sampled nodes to their average. Hence, we have
\begin{eqnarray*}
\lim_{t\rightarrow \infty}
\E{\left(
\overline{c}-
\frac1n\sum_{i=1}^n x_i^t
\right)^2}
&=&
\lim_{t\rightarrow \infty}
\E{\left(
\frac1n\sum_{i=1}^n \phi_i^{t_i-1}v_i^{t_i-1}
\right)^2}
\quad
\leq
\quad 
\lim_{t\rightarrow \infty}
\E{\frac1n \sum_{i=1}^n \left(
 \phi_i^{t_i-1}v_i^{t_i-1}
\right)^2}
\\
&=&
\frac1n\lim_{t\rightarrow \infty}
\sum_{i=1}^n \E{  \left(
 \phi_i^{t_i-1}v_i^{t_i-1}
\right)^2}
\quad
=
\quad
\frac1n\lim_{t\rightarrow \infty}
\sum_{i=1}^n \E{  
 \phi_i^{2t_i-2}}\E{\left(v_i^{t_i-1}\right)^2
}
\\
&=&
\frac1n\lim_{t\rightarrow \infty}
\sum_{i=1}^n \E{  
 \phi_i^{2t_i-2}}\sigma_i^2
 \quad
 =
 \quad
 \frac1n \sum_{i=1}^n \sigma_i^2 \lim_{t\rightarrow \infty}
\E{  
 \phi_i^{2t_i-2}}
 \\
 &=&
 0,
\end{eqnarray*}
as desired. 

It is not the purpose of this paper to define any quantifiable notion of protection of the initial values formally. However, we note that it is likely the case that the protection of private value $c_i$ will be stronger for bigger $\sigma_i$ and for $\phi_i$ closer to $1$.

For simplicity, we provide only the primal algorithm below.

\setcounter{algorithm}{\algN-1}
\begin{algorithm}[H]
\textbf{Input: }{vector of private values $c\in \R^n$; initial variances $\sigma^2_i \in \R_+$ and variance decrease rate $\phi_i$ such that $0\leq \phi_i < 1$ for all nodes $i$.}\\
{\textbf{Initialize}:} Set $x^0=c$; $t_1=t_2=\dots = t_n=0$, $v_1^{-1}=v_2^{-1}=\dots = v_n^{-1}=0$.\\
\For {$t= 0,1,\dots k-1$} {
	 \begin{enumerate}
\item Choose node $e = (i,j)\in \cE$ uniformly at random
\item Generate $v_i^{t_i}\sim N(0,\sigma^2_i)$ and $v_j^{t_j}\sim N(0,\sigma^2_j)$
\item Set \[w_i^{t_i} = 
 \phi_i^{t_i}v_i^{t_i}-\phi_i^{t_i-1}v_i^{t_i-1} 
 \] 
 \[w_j^{t_j} = 
 \phi_j^{t_j}v_j^{t_j}-\phi_j^{t_j-1}v_j^{t_j-1} 
 \] 
\item Update the primal variable: \[x^{t+1}_i =x^{t+1}_j= 
\frac{x^t_i+w^{t_i}_i+x^t_j+w^{t_j}_j}{2},\ \forall\, l \neq i,j:\,x^{t+1}_l=x^{t}_l
  \]
\item Set $t_i=t_i+1$, $t_j=t_j+1$
\end{enumerate}
 }
\textbf{return} $x^k$
\caption{(Primal form)}
\end{algorithm}

We now provide results of dual analysis of Algorithm~\algN. The following lemma provides us the expected decrease in dual suboptimality for each iteration.

\begin{lem}\label{L: noise exp iteration bound}
Let $d_i$ denote the number of neighbours of node $i$. Then, 
\begin{eqnarray}
\begin{split}
\E{ D(y^*)- D(y^{t+1}) } \leq & 
\left( 1-\frac{\ac(\cG)}{2m}\right)\E{D(y^*)- D(y^{t})} +\frac{1}{4m}\sum_{i=1}^n d_i\sigma^2_i \E{\phi_i^{2t_i}}
\\& \qquad-
\frac{1}{2m}\sum_{e\in \cE} \E{\left( \phi_i^{t_i-1}v_i^{t_i-1} x_j^t+ \phi_j^{t_j-1}v_j^{t_j-1} x_i^t
\right)}.
\end{split}
\label{Eq: noise gossip iteration bound final}
\end{eqnarray}
\end{lem}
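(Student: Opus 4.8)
The plan is to run a one-step dual analysis analogous to the proof of Theorem~\ref{thm:G}, treating one iteration of Algorithm~\algN\ as a coordinate-ascent step on $D$ along the sampled direction $f_e$, perturbed by the inserted noise. Writing $x^t = c + \bA^\top y^t$ for the dual image, so that $D(y^*)-D(y^t)=\tfrac12\|\bar c\ones - x^t\|^2$ by \eqref{eq:99d8gds}, the whole argument reduces to lower-bounding $\E{D(y^{t+1})-D(y^t)}$ and then rearranging $\E{D(y^*)-D(y^{t+1})} = \E{D(y^*)-D(y^t)} - \E{D(y^{t+1})-D(y^t)}$.

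First I would apply Lemma~\ref{lem:98y98yss} to the dual step actually induced by the noisy averaging on the sampled edge $e=(i,j)$, obtaining $D(y^{t+1})-D(y^t) = -\lambda^t(x_i^t-x_j^t)-(\lambda^t)^2$ for the realized step $\lambda^t$, which depends both on the current node values and on the freshly inserted noise $w_i^{t_i},w_j^{t_j}$. Next I would substitute the explicit form $w_i^{t_i}=\phi_i^{t_i}v_i^{t_i}-\phi_i^{t_i-1}v_i^{t_i-1}$ and expand, keeping careful track of three kinds of terms: a signal term built from $x_i^t-x_j^t$, quadratic terms in the noise, and products of noise with the iterate.

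The crucial averaging is then done in two stages. Conditioning on the history and on the sampled edge, the fresh draws $v_i^{t_i},v_j^{t_j}$ are independent of everything and mean zero, so every term linear in a fresh draw vanishes and its square contributes the variance $\sigma_i^2\E{\phi_i^{2t_i}}$. The previously inserted noise $\phi_i^{t_i-1}v_i^{t_i-1}$, by contrast, is already baked into the state and hence correlated with the iterate, so its contribution does not vanish and assembles into the cross term. Averaging then over the uniform edge choice (each edge with probability $1/m$) turns edge-sums into the stated node-sums: the signal term becomes $\tfrac1{4m}\sum_{e=(i,j)\in\cE}(x_i^t-x_j^t)^2$, and since a node's noise feeds into each of its $d_i$ incident edges, the variance term acquires the degree weight $d_i$.

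Finally, to recover the contraction factor I would reuse the exact chain from the proof of Theorem~\ref{thm:G}: bound $\tfrac1{4m}\sum_{e}(x_i^t-x_j^t)^2$ from below using the definition \eqref{eq:hdgugvej} of $\beta(\cG)$, Lemma~\ref{lem:beta} (i.e.\ $\beta(\cG)=n/\ac(\cG)$), and the identity \eqref{Eq: equality}, which yields $\tfrac{\ac(\cG)}{2m}\big(D(y^*)-D(y^t)\big)$. Collecting the three contributions gives exactly \eqref{Eq: noise gossip iteration bound final}. The main obstacle I anticipate is the noise bookkeeping: pinning down the realized step $\lambda^t$ and the precise relationship between the algorithm's noisy values and the clean dual image $x^t$ (needed so that Lemma~\ref{lem:98y98yss} applies with the clean difference $x_i^t-x_j^t$), and then separating the fresh from the not-yet-withdrawn noise so that the former produces a clean variance while the latter is exactly the surviving correlation with the iterate. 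A secondary subtlety is that the counters $t_i$ are themselves random, forcing the Gaussian expectations to be taken conditionally before the edge-average and the degrees $d_i$ to be tracked through that average.
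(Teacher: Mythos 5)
Your high-level architecture --- a three-term expansion of the one-step gain, fresh noise vanishing in conditional expectation or contributing its variance, edge-averaging producing the degree weights $d_i$, and the contraction factor recovered by the chain from Theorem~\ref{thm:G} via \eqref{eq:8998gd98gikd} --- matches the paper's proof. But the foundation of your plan fails: Lemma~\ref{lem:98y98yss} cannot be applied to the noisy update, because that update is not of the form $y^{t+1}=y^t+\lambda^t f_e$ for \emph{any} realized $\lambda^t$. A dual step along $f_e$ changes $x_i$ by $+\lambda$ and $x_j$ by $-\lambda$, hence preserves $\ones^\top x$; the noisy averaging changes $\ones^\top x$ by $w_i^{t_i}+w_j^{t_j}\neq 0$, i.e.\ it moves the iterate off the affine space $\{c+\bA^\top y\}$ altogether (which is also why the paper never constructs dual iterates for Algorithm~\algN\ and instead treats \eqref{eq:99d8gds} as the definition of the suboptimality being tracked). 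Quantitatively, any quantity of the form $-\lambda(x_i^t-x_j^t)-\lambda^2$ is bounded above by $\tfrac14(x_i^t-x_j^t)^2$, whereas the true one-step gain equals $\tfrac14(x_i^t-x_j^t)^2$ plus terms linear and quadratic in $w_i^{t_i}+w_j^{t_j}$ (cf.\ \eqref{Eq: noise_no_exp}), and for suitable noise realizations it strictly exceeds $\tfrac14(x_i^t-x_j^t)^2$; so no choice of $\lambda^t$ reproduces it. The paper's route is to expand $\tfrac12\|\bar c\ones-x^t\|^2-\tfrac12\|\bar c\ones-x^{t+1}\|^2$ directly at the two modified coordinates; alternatively one could split the update into a dual move along $f_e$ plus an orthogonal shift along $e_i+e_j$, but that requires a two-direction generalization of Lemma~\ref{lem:98y98yss}, not the lemma itself.

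The second gap is the one you flag as a ``subtlety'' but do not resolve, and it is the crux of the bookkeeping: the surviving correlation of the not-yet-withdrawn noise with the iterate cannot simply be ``assembled into the cross term''. The right-hand side of \eqref{Eq: noise gossip iteration bound final} contains only the \emph{neighbor} cross terms $\E{\phi_i^{t_i-1}v_i^{t_i-1}x_j^t}$; the \emph{own-value} correlation must be evaluated exactly, namely $\E{\phi_i^{t_i-1}v_i^{t_i-1}x_i^t}=\tfrac12\E{\left(\phi_i^{t_i-1}v_i^{t_i-1}\right)^2}$ (the paper's Lemma~\ref{Lm: another_independence}, proved by unrolling the averaging step at which that noise entered node $i$ with coefficient $\tfrac12$). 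This identity is needed precisely so that, after edge-averaging, the resulting $+\tfrac{1}{4m}\sum_i d_i\E{\left(\phi_i^{t_i-1}v_i^{t_i-1}\right)^2}$ cancels the $-\tfrac{1}{4m}\sum_i d_i\E{\left(\phi_i^{t_i-1}v_i^{t_i-1}\right)^2}$ produced by the quadratic term $-\tfrac14\E{\left(w_i^{t_i}+w_j^{t_j}\right)^2}$. Without this identity and cancellation, your expansion leaves uncancelled second moments of the withdrawn noise and does not reduce to \eqref{Eq: noise gossip iteration bound final}. So: right skeleton, but the two load-bearing steps --- a valid substitute for Lemma~\ref{lem:98y98yss}, and the own-value correlation identity --- are missing.
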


We use the lemma to prove our main result, in which we show linear convergence for the algorithm. For notational simplicity, we decided to have $\rho^t=(\rho)^t$, i.e. superscript of $\rho$ denotes its power, not an iteration counter.

\begin{thm}
Let us define the following quantities:
\begin{eqnarray*}
\rho&\eqdef& 1-\frac{\ac(\cG)}{2m},
\\
\psi^t&\eqdef&\frac{1}{\sum_{i=1}^n\left(d_i\sigma_i^2\right)}\sum_{i=1}^n d_i \sigma_i^2\left(1-\frac{d_i}{m}\left(1-\phi_i^2\right) \right)^{t}.
\end{eqnarray*}

Then for all $k\geq 1$ we have the following bound
\begin{equation*}
\E{ D(y^*)- D(y^{k}) } \leq  \rho^k \left( D(y^*)- D(y^{0}) \right)  + \frac{\sum\left(d_i\sigma_i^2\right)}{4m}\sum_{t=1}^k \rho^{k-t}\psi^{t}.
\end{equation*}
\label{T: ng general convergence}
\end{thm}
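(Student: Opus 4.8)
The plan is to convert the one-step estimate of Lemma~\ref{L: noise exp iteration bound} into a scalar geometric recursion and then unroll it. Abbreviate $G^t \eqdef \E{D(y^*)-D(y^t)}$, so that $G^0=D(y^*)-D(y^0)$ is deterministic. With $\rho=1-\frac{\ac(\cG)}{2m}$, Lemma~\ref{L: noise exp iteration bound} reads
\[
G^{t+1}\le \rho\,G^t+\frac{1}{4m}\sum_{i=1}^n d_i\sigma_i^2\,\E{\phi_i^{2t_i}}-\underbrace{\frac{1}{2m}\sum_{e=(i,j)\in\cE}\E{\phi_i^{t_i-1}v_i^{t_i-1}x_j^t+\phi_j^{t_j-1}v_j^{t_j-1}x_i^t}}_{=:\,C^t}.
\]
My target is the clean recursion $G^{t+1}\le\rho\,G^t+\frac{\sum_i d_i\sigma_i^2}{4m}\psi^{t+1}$, which unrolls at once to the claimed bound; so I must (a) evaluate the injection term, and (b) show that the cross term $C^t$ is non-negative and in fact large enough to advance the exponent of $\psi$ by one.

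First I would evaluate the injection term. The only randomness in $\E{\phi_i^{2t_i}}$ is the counter $t_i$, the number of iterations among $0,\dots,t-1$ in which an edge incident to node $i$ was activated. Since each iteration independently picks one of the $m$ edges uniformly and $d_i$ of them touch node $i$, we have $t_i\sim\mathrm{Bin}(t,d_i/m)$, so by the binomial formula
\[
\E{\phi_i^{2t_i}}=\E{(\phi_i^2)^{t_i}}=\left(1-\tfrac{d_i}{m}+\tfrac{d_i}{m}\phi_i^2\right)^{t}=\left(1-\tfrac{d_i}{m}(1-\phi_i^2)\right)^{t}.
\]
Substituting this identifies the injection term as $\frac{\sum_i d_i\sigma_i^2}{4m}\,\psi^{t}$, matching the definition of $\psi^t$ in the theorem.

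It remains to handle $C^t$, and this is where I expect the real difficulty. To see that $C^t\ge 0$, decompose $x^t=\tilde x^t+z^t$, where $\tilde x^t$ is the noise-free gossip run on the same edge sequence (a deterministic function of the activated edges) and $z^t$ is the accumulated-noise part, with $z^0=0$ and the same averaging dynamics plus the increments $\tfrac12(w_i^{t_i}+w_j^{t_j})$. Conditioning on the whole edge sequence fixes every counter $t_i$, fixes $\tilde x^t$, and makes $z^t$ a fixed linear combination of the independent mean-zero samples $\{v_\ell^s\}$. Since $v_i^{t_i-1}$ is independent of the edges and of every other sample, its product with $\tilde x_j^t$ has zero conditional expectation, while its product with $z_j^t$ retains only the self-term $\phi_i^{t_i-1}\sigma_i^2\,c_{ij}$, where $c_{ij}\ge 0$ is the weight with which the last sample injected by $i$ has reached $j$; non-negativity of $c_{ij}$ holds because each averaging step propagates values by convex combinations. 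Hence $C^t\ge 0$. The harder, quantitative step is the stronger inequality $C^t\ge \frac{\sum_i d_i\sigma_i^2}{4m}\big(\psi^{t}-\psi^{t+1}\big)$, that is $C^t\ge \frac{1}{4m}\sum_i d_i\sigma_i^2\,\frac{d_i}{m}(1-\phi_i^2)\big(1-\frac{d_i}{m}(1-\phi_i^2)\big)^{t}$; proving it requires controlling, in expectation over the edge sequence, how much of each node's most recent residual noise currently sits on its neighbours, and this is the main obstacle.

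Granting the two inequalities above, the per-step bound becomes $G^{t+1}\le \rho\,G^t+\frac{\sum_i d_i\sigma_i^2}{4m}\,\psi^{t+1}$. A routine induction on $t$ (base case $G^1\le \rho\,G^0+\frac{\sum_i d_i\sigma_i^2}{4m}\psi^1$) then gives, for every $k\ge 1$,
\[
G^{k}\le \rho^{k}G^{0}+\frac{\sum_i d_i\sigma_i^2}{4m}\sum_{t=1}^{k}\rho^{\,k-t}\psi^{t},
\]
which is exactly the claim once $G^k$ and $G^0$ are rewritten as $\E{D(y^*)-D(y^k)}$ and $D(y^*)-D(y^0)$.
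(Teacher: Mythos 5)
Your route is the same as the paper's: start from Lemma~\ref{L: noise exp iteration bound}, evaluate the injection term via the binomial identity $\E{\phi_i^{2t_i}}=\left(1-\frac{d_i}{m}(1-\phi_i^2)\right)^{t}$ (this is exactly the paper's Lemma~\ref{L: noisy binomial}), control the sign of the cross term, and unroll the scalar recursion. Your argument that the cross term $C^t$ is nonnegative --- conditioning on the edge sequence, splitting $x^t$ into the noise-free gossip iterate plus the accumulated-noise part, and noting that the most recent sample $v_i^{t_i-1}$ has only been propagated by convex combinations and has not yet been withdrawn, so its coefficient in $x_j^t$ is nonnegative --- is correct, and is a more explicit version of the paper's Lemma~\ref{L: noisy positive correlation}. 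The genuine gap is the one you flag yourself: the inequality $C^t\ge \frac{\sum_i d_i\sigma_i^2}{4m}\left(\psi^t-\psi^{t+1}\right)$ is never proved, only ``granted''. Writing $G^t=\E{D(y^*)-D(y^t)}$ and $S=\frac{\sum_i d_i\sigma_i^2}{4m}$, what you actually establish is the recursion $G^{t+1}\le\rho G^t+S\psi^t$, which unrolls to $G^k\le\rho^k G^0+S\sum_{t=1}^k\rho^{k-t}\psi^{t-1}$ --- not the stated bound, whose sum carries $\psi^t$ rather than $\psi^{t-1}$. Since $\psi^t$ is nonincreasing in $t$, the stated bound is strictly stronger, so as written your proposal does not prove the theorem.

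However, the obstacle you hit is an imprecision in the paper, not a defect specific to your attempt. The paper likewise proves only $C^t\ge 0$, discards the cross term, arrives at the identical recursion $G^{t+1}\le\rho G^t+S\psi^t$, and then asserts that it ``gives us inductively'' the sum $\sum_{t=1}^k\rho^{k-t}\psi^t$; this is an off-by-one slip, because that recursion only yields $\sum_{t=1}^k\rho^{k-t}\psi^{t-1}$. In other words, the quantitative cross-term bound you correctly identify as the missing ingredient is needed to prove the theorem literally as stated, and neither you nor the paper supplies it. The sensible resolution is not to chase that inequality but to reindex: state the conclusion with $\psi^{t-1}$ (equivalently, $\sum_{t=0}^{k-1}\rho^{k-1-t}\psi^t$), which your argument proves completely. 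Nothing downstream is lost: under the choice \eqref{Eq: phi_i def} one has $\psi^{t-1}=\left(1-\frac{\gamma}{m}\right)^{t-1}$, hence $\sum_{t=1}^k\rho^{k-t}\psi^{t-1}\le k\max\left(\rho,1-\frac{\gamma}{m}\right)^{k-1}$, and Corollary~\ref{C: noisy gossip special} survives up to a benign constant factor.
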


Note that $\psi^t$ is a weighted sum of $t$-th powers of real numbers smaller than one.  For large enough $t$, this quantity will depend  on the largest of these numbers. This brings us to define $M$ as the set of indices $i$ for which the quantity $1-\frac{d_i}{m}\left(1-\phi_i^2\right)$ is maximized:  
$$
M=\arg\max_{i} \left\{ 1-\frac{d_i}{m}\left(1-\phi_i^2\right)\right\}. 
$$
Then for any $i_\mathrm{max}\in M$ we have
$$
\psi^t  
\approx
 \frac{1}{\sum_{i=1}^n \left(d_i\sigma_i^2\right)} \sum_{i\in M} d_i \sigma_i^2\left(1-\frac{d_i}{m}\left(1-\phi_i^2\right) \right)^{t}
 =
  \frac{\sum_{i\in M} d_i \sigma_i^2}{\sum_{i=1}^n \left(d_i\sigma_i^2\right)} \left(1-\frac{d_{i_\mathrm{max}}}{m}\left(1-\phi_{i_\mathrm{max}}^2\right) \right)^{t},
$$
which means that increasing $\phi_j$ for $j\not\in M$ will not substantially influence convergence rate. 

Note that as soon as we have
\begin{equation}
\rho>
 1-\frac{d_{i}}{m}\left(1-\phi_{i}^2\right)  \label{eq: treshold}
\end{equation} 
for all $i$, the rate from theorem \ref{T: ng general convergence} will be driven by $\rho^k$ (as $k \rightarrow \infty$) and we will have

\begin{equation}
\E{ D(y^*)- D(y^{k}) } = \tilde{O}\left(\rho^k\right)
\label{eq: tildeO}
\end{equation}

One can think of the above as a threshold: if there is $i$ such that $\phi_i$ is large enough so that the inequality \eqref{eq: treshold} does not hold, the convergence rate is driven by $\phi_{i_\mathrm{max}}$. Otherwise, the rate  is not influenced by the insertion of noise. Thus, in theory, we do not pay anything in terms of performance as long as we do not hit the threshold. One might be interested in choosing $\phi_i$ so that the threshold is attained for all $i$, and thus $M=\{1,\dots,n\}$. This motivates the following result:

\begin{cor}
\label{corolary}
Let us choose 
\begin{equation}
\phi_i \eqdef \sqrt{1-\frac{\gamma}{d_i}}
\label{Eq: phi_i def}
\end{equation}
for all $i$, where  $\gamma \leq d_{\mathrm{min}}$.
Then
\begin{eqnarray*}
\E{ D(y^*)- D(y^{k}) } & \leq &
\left( 1-\min\left( \frac{\ac(\cG)}{2m},\frac{\gamma}{m}\right) \right)^k \left( D(y^*)- D(y^{0})+\frac{\sum_{i=1}^n \left(d_i\sigma_i^2\right)}{4m}k \right). 
\end{eqnarray*}

As a consequence, $\phi_i=\sqrt{1-\tfrac{\ac(\cG)}{2d_i}}$ is the largest decrease rate of noise for node $i$ such that the guaranteed convergence rate of the algorithm is not violated.
 
\label{C: noisy gossip special}
\end{cor}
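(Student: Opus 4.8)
The plan is to specialize Theorem~\ref{T: ng general convergence} to the decay rates prescribed in \eqref{Eq: phi_i def} and then bound the resulting error sum. The whole point of the choice $\phi_i = \sqrt{1 - \gamma/d_i}$ is that it renders the base $1 - \frac{d_i}{m}(1-\phi_i^2)$ appearing in $\psi^t$ independent of the node $i$. Concretely, I would first record that $1 - \phi_i^2 = \gamma/d_i$, so that
\[
1 - \frac{d_i}{m}\left(1 - \phi_i^2\right) = 1 - \frac{d_i}{m}\cdot\frac{\gamma}{d_i} = 1 - \frac{\gamma}{m}
\]
for every $i$; the hypothesis $\gamma \le d_{\mathrm{min}}$ (with $\gamma > 0$) guarantees $0 \le \phi_i < 1$, so the choice is admissible in the sense required by Theorem~\ref{T: ng general convergence}. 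Substituting this common value into the definition of $\psi^t$, the weights $d_i\sigma_i^2$ cancel against their normalizing sum $\sum_i d_i\sigma_i^2$, leaving the pure power $\psi^t = \left(1 - \frac{\gamma}{m}\right)^t$.

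Next I would insert this into the bound of Theorem~\ref{T: ng general convergence}. Abbreviating $\rho = 1 - \frac{\ac(\cG)}{2m}$ and $\mu = 1 - \frac{\gamma}{m}$, the error term is $\frac{\sum_i d_i\sigma_i^2}{4m}\sum_{t=1}^k \rho^{k-t}\mu^t$, a convolution of two geometric sequences. The key step is to dominate it by the slower of the two rates: with $a \eqdef \max(\rho,\mu) = 1 - \min\left(\frac{\ac(\cG)}{2m},\frac{\gamma}{m}\right)$, each summand obeys $\rho^{k-t}\mu^t \le a^{k-t}a^t = a^k$ since $\rho,\mu \le a$ and the exponents are nonnegative, whence $\sum_{t=1}^k \rho^{k-t}\mu^t \le k\,a^k$. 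Using $\rho^k \le a^k$ on the first term as well and factoring out $a^k$ yields exactly
\[
\E{D(y^*) - D(y^k)} \le a^k\left(D(y^*) - D(y^0) + \frac{\sum_i d_i\sigma_i^2}{4m}\,k\right),
\]
which is the asserted inequality.

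For the concluding remark I would observe that the guaranteed factor $a$ equals the noise-free rate $\rho$ exactly when $\min\left(\frac{\ac(\cG)}{2m},\frac{\gamma}{m}\right) = \frac{\ac(\cG)}{2m}$, i.e.\ when $\gamma \ge \ac(\cG)/2$. Since $\phi_i = \sqrt{1-\gamma/d_i}$ is strictly decreasing in $\gamma$, the slowest admissible noise decay that still preserves the rate $\rho$ is reached at the boundary $\gamma = \ac(\cG)/2$, giving $\phi_i = \sqrt{1 - \frac{\ac(\cG)}{2d_i}}$; this value is legitimate because $\ac(\cG) \le d_{\mathrm{min}}$ forces $\gamma = \ac(\cG)/2 \le d_{\mathrm{min}}$.

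I do not expect a genuine obstacle: the argument is a direct substitution exploiting the $i$-independence engineered by \eqref{Eq: phi_i def}, followed by an elementary $\max$-bound on the mixed geometric sum. The only points demanding a little care are verifying admissibility $0 \le \phi_i < 1$ and correctly identifying which of $\rho$ and $\mu$ dominates, both of which are routine.
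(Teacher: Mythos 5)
Your proof of the main inequality is correct and follows essentially the same route as the paper: the choice \eqref{Eq: phi_i def} makes the base $1-\frac{d_i}{m}\left(1-\phi_i^2\right)$ equal to $1-\frac{\gamma}{m}$ for every $i$, so the weights in $\psi^t$ normalize away and $\psi^t=\left(1-\frac{\gamma}{m}\right)^t$; the mixed geometric sum is then dominated by $k\max\left(\rho,1-\frac{\gamma}{m}\right)^k$, exactly as in the paper's proof.

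One correction to your justification of the concluding remark: the inequality $\ac(\cG)\leq d_{\mathrm{min}}$ that you invoke is false in general --- for the complete graph $K_n$ one has $\ac(K_n)=n>n-1=d_{\mathrm{min}}$. The correct statement, which the paper cites in its discussion, is Fiedler's bound $\ac(\cG)\leq\frac{n}{n-1}d_{\mathrm{min}}$. This still gives what you need: $\gamma=\ac(\cG)/2\leq\frac{n}{2(n-1)}d_{\mathrm{min}}\leq d_{\mathrm{min}}$ for $n\geq 2$, so the boundary choice $\gamma=\ac(\cG)/2$ is admissible and your conclusion stands, but the admissibility should be routed through Fiedler's inequality rather than the stronger (and false) claim.
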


While the above result clearly states the important threshold, it is not always practical as $\ac(\cG)$ might not be known. However, note that if we choose $\tfrac{nd_{\mathrm{min}}}{2(n-1)}\leq \gamma \leq d_{\mathrm{min}}$, we have 
$\min\left( \frac{\ac(\cG)}{2m},\frac{\gamma}{m}\right) = \frac{\ac(\cG)}{2m}$
since 
$\frac{\ac(\cG)}{2}\leq \frac{n}{n-1}\frac{d_{\mathrm{min}}}{2}\leq \gamma,$
where $e(\cG)$ denotes graph {\em edge connectivity}: the minimal number of edges to be removed so that the graph becomes disconnected.   Inequality $\ac(\cG) \leq \tfrac{n}{n-1}d_{\mathrm{min}}$ is a well known result in spectral graph theory \cite{fiedler1973algebraic}. As a consequence, if for all $i$ we have
$$
\phi_i\leq \sqrt{1-\frac{(n-1)d_\mathrm{min}}{2nd_i}},
$$
then the convergence rate is not driven by the noise.

\section{Numerical Evaluation}
\label{sec:experiments}

We devote this section to experimentally evaluate the performance of the Algorithms~\algB, \algE\ and \algN\ we proposed in the previous sections, applied to the Average Consensus problem. In the following experiments, we used the following popular graph topologies.
\begin{itemize}
\item {\em Cycle graph} with $n$ nodes: $\mathcal{C}(n)$. In our experiments we choose $n=10$. This small simple graph with regular topology is chosen for illustration purposes.
\item {\em Random geometric graph} with $n$ nodes and radius $r$:  $\mathcal{G}(n,r)$. Random geometric graphs \cite{penrose2003random} are very important in practice because of their particular formulation which is ideal for modeling wireless sensor networks \cite{gupta2000capacity, boyd2006randomized}. In our experiments we focus on a $2$-dimensional randomized geometric graph $\cG(n,r)$ which is formed by placing $n$ nodes uniformly at random in a unit square with edges between nodes which are having euclidean distance less than the given radius $r$. We set this to be to be $r = r(n) = \sqrt{\log(n)/n}$ --- it is well know that the connectivity is preserved in this case~\cite{gupta2000capacity}. We set $n = 100$.
\end{itemize}

An illustration of the two graphs appears is in Figure~\ref{Illustration}. 

\begin{figure}[H]
\centering
\begin{subfigure}{.45\textwidth}
  \centering
  \includegraphics[width=1\linewidth]{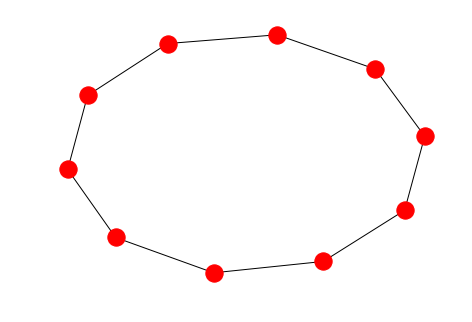}
  \caption{Cycle Graph: $\cC(10)$ }
\end{subfigure}%
\begin{subfigure}{.45\textwidth}
  \centering
  \includegraphics[width=1\linewidth]{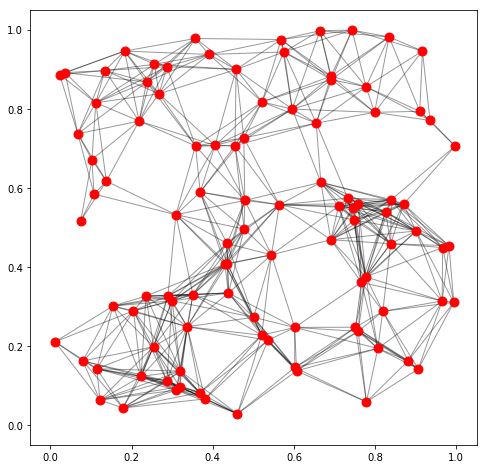}
  \caption{Random Geometric Graph: $\cG(n,r)$ }
\end{subfigure}
\caption{{Illustration of the two graph topologies we focus on in this section.}}
\label{Illustration}
\end{figure}

\textbf{Setup:}
In all experiments we generate a vector with of initial values $c_i$ from a uniform distribution over $[0,1]$. We run several experiments and present two kinds of figures that helps us to understand how the algorithms evolve and verify the theoretical results of the previous sections. These figures are:
\begin{enumerate}
\item The evolution of the initial values of the nodes. In these figures we plot how the trajectory of the values $x_i^k$ of each node $i$ evolves throughout iterations. The black doted horizontal line represents the exact average consensus value which all nodes should approach, and thus all other lines should approach this level.
\item The evolution of the relative error measure $q^t \eqdef \frac{\|x^t-x^*\|^2}{\|x^0 - x^*\|^2}.$
\end{enumerate}
We run each method for several parameters and for a pre-specified number of iterations not necessarily the same for each experiment. In each figure we have the relative error, both in normal scale or logarithmic scale, on the vertical axis and number of iterations on the horizontal axis.

To illustrate the first concept, we provide a simple example with the evolution of the initial values $x_i^k$ for the case of the Standard Gossip algorithm \cite{boyd2006randomized} in Figure~\ref{ExactMethod}. The horizontal black dotted line represents the average consensus value. It is the exact average of the initial values $c_i$ of the nodes in the network.

\begin{figure}[H]
\centering
\begin{subfigure}{.45\textwidth}
  \centering
  \includegraphics[width=1\linewidth]{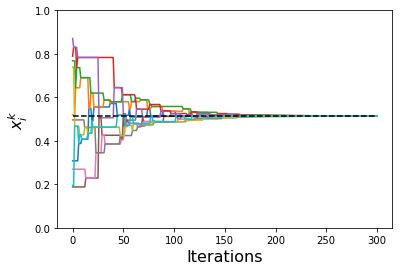}
  \caption{Cycle Graph}
\end{subfigure}%
\begin{subfigure}{.45\textwidth}
  \centering
  \includegraphics[width=1\linewidth]{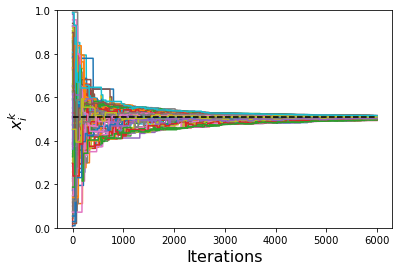}
  \caption{Random Geometric Graph}
\end{subfigure}\\
\caption{Trajectories of the values $x_i^t$ for the Standard Gossip algorithm for Cycle Graph and a random geometric graph. Each line corresponds to $x_i$ for some $i$.}
\label{ExactMethod}
\end{figure}

In the rest of this section we evaluate the performance of the novel algorithms we propose, and contrast with the above Standard Gossip algorithm, which we refer to as ``Baseline'' in the following figures labels.

\subsection{Private Gossip via Binary Oracle}

In this section we evaluate the performance of Algorithm~\algB\ presented in Section~\ref{sec:B}. In the algorithm, the input parameters are the positive stepsizes $\{\lambda^t\}_{t=0}^\infty$. The goal of the experiments is to compare the performance of the proposed algorithm using different choices of $\lambda^t$.

In particular, we use decreasing sequences of stepsizes $\lambda^t=1/t$ and $\lambda^t=1/ \sqrt{t}$, and three different fixed values for the stepsizes $\lambda^t=\lambda \in \{0.001, 0.01, 0.1\}$. We also include the adaptive choice $\lambda^t = \frac{1}{4m}\sum_{e\in \cE}|x_i^t-x_j^t|$ which we have proven to converge with linear rate in Theorem~\ref{thm: stepsize_adaptive}. We compare these choices in Figures \ref{Cycle10LamError} and \ref{RGG100LamErr}, along with the Standard Gossip algorithm for clear comparison.

\begin{figure}[H]
\centering
\begin{subfigure}{.3\textwidth}
  \centering
  \includegraphics[width=1\linewidth]{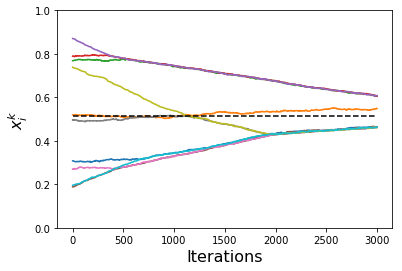}
  \caption{ $\lambda^t=\lambda=0.001$}
\end{subfigure}%
\begin{subfigure}{.3\textwidth}
  \centering
  \includegraphics[width=1\linewidth]{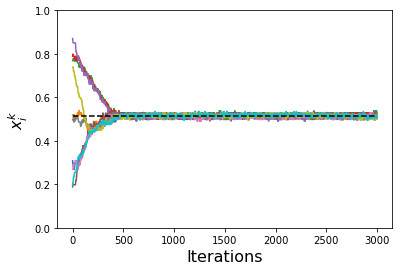}
  \caption{$\lambda^t=\lambda=0.01$}
\end{subfigure}
\begin{subfigure}{.3\textwidth}
  \centering
  \includegraphics[width=1\linewidth]{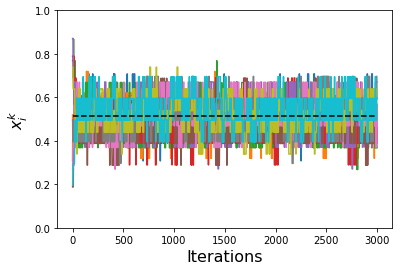}
  \caption{$\lambda^t=\lambda=0.1$}
\end{subfigure}\\
\begin{subfigure}{.3\textwidth}
  \centering
  \includegraphics[width=1\linewidth]{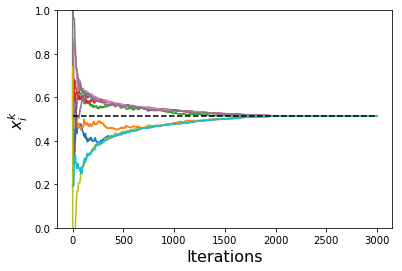}
  \caption{$\lambda^t=\frac{1}{k}$}
\end{subfigure}
\begin{subfigure}{.3\textwidth}
  \centering
  \includegraphics[width=1\linewidth]{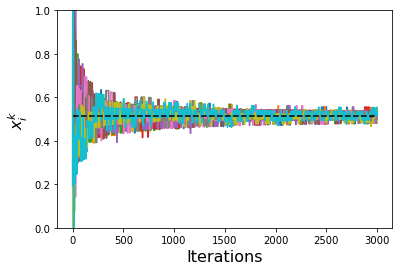}
  \caption{ $\lambda^t=\frac{1}{\sqrt{k}}$}
\end{subfigure}
\begin{subfigure}{.3\textwidth}
  \centering
  \includegraphics[width=1\linewidth]{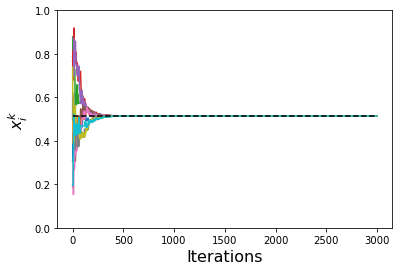}
  \caption{ $\lambda^t=\text{Adaptive}$}
\end{subfigure}
\caption{Trajectories of the values of $x_i^t$ for Binary Oracle run on the cycle graph.}
\label{Cycle10Lam}
\end{figure}

\begin{figure}[H]
\centering
\begin{subfigure}{.45\textwidth}
  \centering
  \includegraphics[width=1\linewidth]{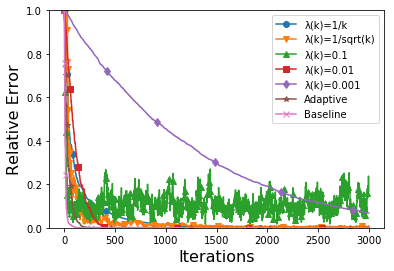}
  \caption{Linear Scale}
\end{subfigure}%
\begin{subfigure}{.45\textwidth}
  \centering
  \includegraphics[width=1\linewidth]{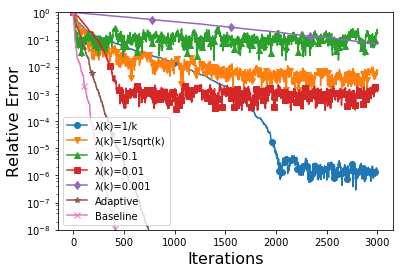}
  \caption{Logarithmic Scale}
\end{subfigure}
\caption{Convergence of the Binary Oracle run on the cycle graph.}
\label{Cycle10LamError}
\end{figure}
 
In general, we clearly see what is expected with the constant stepsizes --- that they converge to a certain neighbourhood and oscillate around optimum. With smaller stepsize, this neighbourhood is more accurate, but it takes longer to reach. With decreasing stepsizes, Theorem~\ref{thm:jhs988sh} suggests that $\lambda^t$ of order $1 / \sqrt{t}$ should be optimal. Figure~\ref{RGG100LamErr} demonstrates this, as the choice of $\lambda^t = 1/t$ decreases the stepsizes too quickly. However, this is not the case in Figure~\ref{Cycle10LamError} in which we observe the opposite effect. This is due to the cycle graph being small and simple, and hence the diminishing stepsize becomes problem only after relatively large number of iterations. With the adaptive choice of stepsizes, we recover linear convergence rate as predicted by Theorem~\ref{thm: stepsize_adaptive}.

The results in Figure~\ref{RGG100LamErr} show one surprising comparison. The adaptive choice of stepsizes does not seem to perform better than $\lambda^t = 1 / \sqrt{t}$. However, we verified that when running for more iterations, the linear rate of adaptive stepsize is present and converges significantly faster to higher accuracies. We chose to present the results for $6000$ iterations since we found it overall more clean.

\begin{figure}[H]
\centering
\begin{subfigure}{.3\textwidth}
  \centering
  \includegraphics[width=1\linewidth]{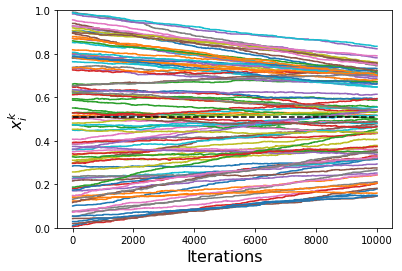}
  \caption{ $\lambda^t=\lambda=0.001$}
\end{subfigure}%
\begin{subfigure}{.3\textwidth}
  \centering
  \includegraphics[width=1\linewidth]{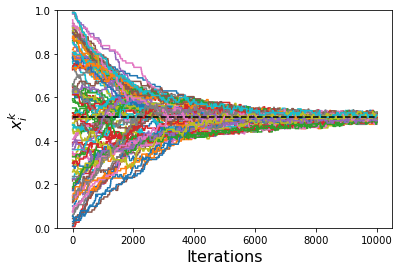}
  \caption{$\lambda^t=\lambda=0.01$}
\end{subfigure}
\begin{subfigure}{.3\textwidth}
  \centering
  \includegraphics[width=1\linewidth]{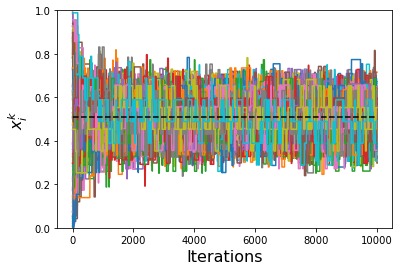}
  \caption{ $\lambda^t=\lambda=0.1$}
\end{subfigure}\\
\begin{subfigure}{.3\textwidth}
  \centering
  \includegraphics[width=1\linewidth]{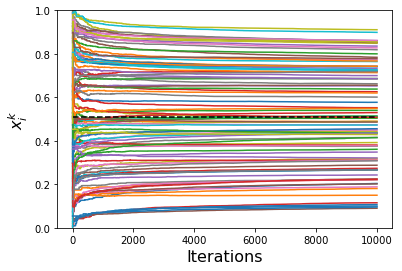}
  \caption{ $\lambda^t=\frac{1}{t}$}
\end{subfigure}
\begin{subfigure}{.3\textwidth}
  \centering
  \includegraphics[width=1\linewidth]{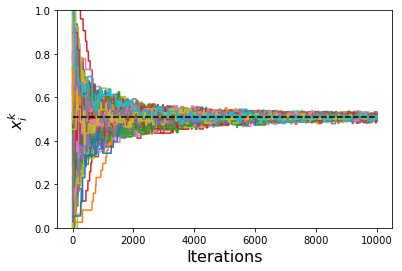}
  \caption{ $\lambda^t=\frac{1}{\sqrt{t}}$}
\end{subfigure}
\begin{subfigure}{.3\textwidth}
  \centering
  \includegraphics[width=1\linewidth]{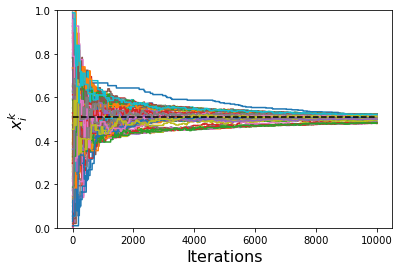}
  \caption{ $\lambda^t=\text{Adaptive}$}
\end{subfigure}
\caption{Trajectories of the values of $x_i^t$ for Binary Oracle run on the random geometric graph.}
\label{RGG100Lam}
\end{figure}

\begin{figure}[H]
\centering
\begin{subfigure}{.45\textwidth}
  \centering
  \includegraphics[width=1\linewidth]{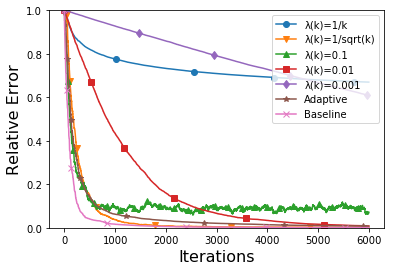}
  \caption{Linear Scale}
\end{subfigure}%
\begin{subfigure}{.45\textwidth}
  \centering
  \includegraphics[width=1\linewidth]{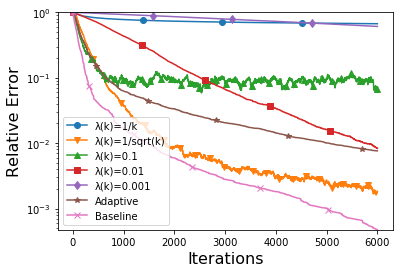}
  \caption{Logarithmic Scale}
\end{subfigure}
\caption{Convergence of the Binary Oracle run on the random geometric graph.}
\label{RGG100LamErr}
\end{figure}

\subsection{Private Gossip via $\epsilon$-Gap Oracle}

In this section we evaluate the performance of the Algorithm~\algE\ presented in Section~\ref{sec:E}. In the algorithm, the input parameter is the positive error tolerance variable $\epsilon$. For experimental evaluation. we choose three different values for the input, $\epsilon \in \{0.2, 0.02, 0.002\}$, and again use the same cycle and random geometric graphs. The trajectories of the values $x_i^t$ are presented in Figures~\ref{Cycle10Gap} and \ref{RGG100Gap}, respectively. The performance of the algorithm in terms of the relative error is presented in Figures~\ref{Cycle10GapErr} and \ref{RGG100GapErr}.

The performance is exactly matching the expectation --- with larger $\epsilon$, the method converges very fast to a wide neighbourhood of the optimum. For a small value, it converges much closer to the optimum, but it requires more iterations.

\begin{figure}[H]
\centering
%\begin{subfigure}{.24\textwidth}
 % \centering
  %\includegraphics[width=1\linewidth]{}
  %\caption{$\epsilon=0.0002$}
%\end{subfigure}%
\begin{subfigure}{.3\textwidth}
  \centering
  \includegraphics[width=1\linewidth]{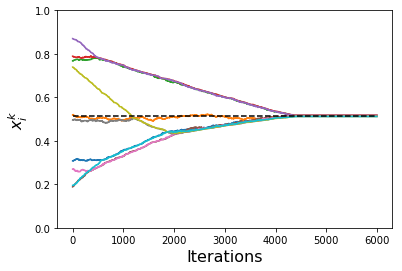}
  \caption{$\epsilon=0.002$}
\end{subfigure}
\begin{subfigure}{.3\textwidth}
  \centering
  \includegraphics[width=1\linewidth]{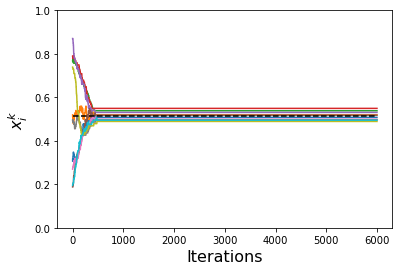}
  \caption{ $\epsilon=0.02$}
\end{subfigure}
\begin{subfigure}{.3\textwidth}
  \centering
  \includegraphics[width=1\linewidth]{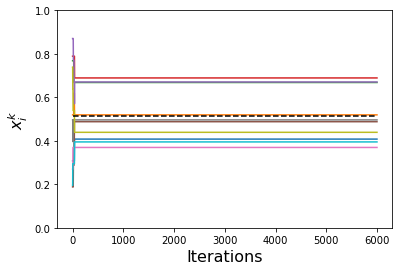}
  \caption{$\epsilon=0.2$}
\end{subfigure}
\caption{Trajectories of the values of $x_i^t$ for $\epsilon$-Gap Oracle run on the cycle graph.}
\label{Cycle10Gap}
\end{figure}

\begin{figure}[H]
\centering
\begin{subfigure}{.45\textwidth}
  \centering
  \includegraphics[width=1\linewidth]{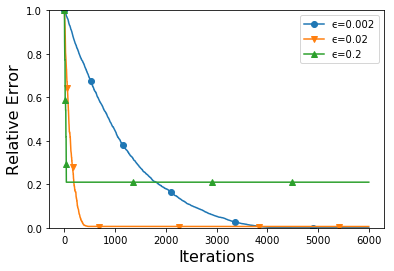}
  \caption{Linear Scale}
\end{subfigure}%
\begin{subfigure}{.45\textwidth}
  \centering
  \includegraphics[width=1\linewidth]{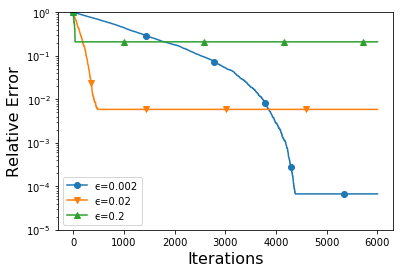}
  \caption{Logarithmic Scale}
\end{subfigure}
\caption{Convergence of the $\epsilon$-Gap Oracle run on the cycle graph.}
\label{Cycle10GapErr}
\end{figure}

\begin{figure}[H]
\centering
%\begin{subfigure}{.24\textwidth}
 % \centering
  %\includegraphics[width=1\linewidth]{}
  %\caption{$\epsilon=0.0002$}
%\end{subfigure}%
\begin{subfigure}{.3\textwidth}
  \centering
  \includegraphics[width=1\linewidth]{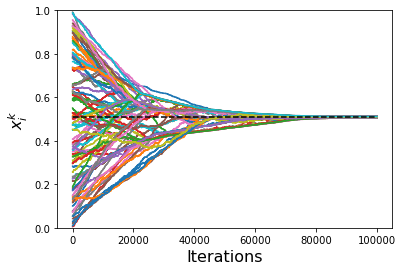}
  \caption{$\epsilon=0.002$}
\end{subfigure}
\begin{subfigure}{.3\textwidth}
  \centering
  \includegraphics[width=1\linewidth]{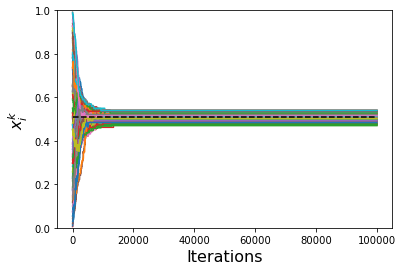}
  \caption{$\epsilon=0.02$}
\end{subfigure}
\begin{subfigure}{.3\textwidth}
  \centering
  \includegraphics[width=1\linewidth]{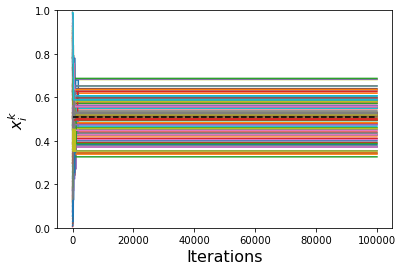}
  \caption{$\epsilon=0.2$}
\end{subfigure}
\caption{Trajectories of the values of $x_i^t$ for $\epsilon$-Gap Oracle run on the random geometric graph.}
\label{RGG100Gap}
\end{figure}

\begin{figure}[H]
\centering
\begin{subfigure}{.45\textwidth}
  \centering
  \includegraphics[width=1\linewidth]{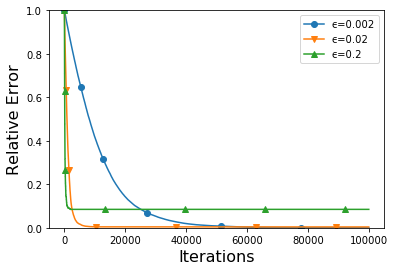}
  \caption{Linear Scale}
\end{subfigure}%
\begin{subfigure}{.45\textwidth}
  \centering
  \includegraphics[width=1\linewidth]{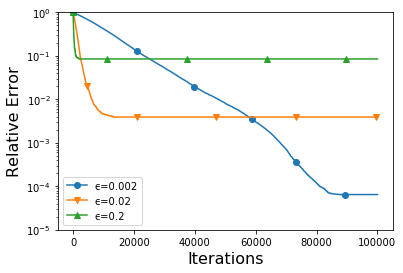}
  \caption{Logarithmic Scale}
\end{subfigure}
\caption{Convergence of the $\epsilon$-Gap Oracle run on the random geometric graph.}
\label{RGG100GapErr}
\end{figure}

\subsection{Private Gossip via Controlled Noise Insertion}

In this section we evaluate the performance of Algorithm~\algN\ presented in Section~\ref{sec: noise}. This algorithm has two different parameters for each node $i$. These are the initial variance $\sigma_i^2 \geq 0$ and the rate of decay, $\phi_i$,  of the noise.

To evaluate the impact of these parameters, we perform several experiments. As earlier, we use the same graph structures for evaluation: cycle graph and random geometric graph. The algorithm converges with a linear rate depending on maximum of two factors --- see Theorem~\ref{T: ng general convergence} and Corollary~\ref{C: noisy gossip special}. We will verify that this is indeed the case, and for values of $\phi_i$ above a certain threshold, the convergence is driven by the rate at which the noise decays. This is true for both identical values of $\phi_i$ for all $i$, and for varying values as per \eqref{Eq: phi_i def}. We further demonstrate the latter is superior in the sense that it enables insertion of more noise, without sacrificing the convergence speed. Finally, we study the effect of various magnitudes of the noise inserted initially.

\subsubsection{Fixed variance, identical decay rates}

In this part, we run Algorithm~\algN\ with $\sigma_i = 1$ for all $i$, and set $\phi_i = \phi$ for all $i$ and some $\phi$. We study the effect of varying the value of $\phi$ on the convergence of the algorithm. 

In both Figures~\ref{Cycle10NoiseErr}b and \ref{RGG100NoiseErr}b, we see that for small values of $\phi$, we eventually recover the same rate of linear convergence as the Standard Gossip algorithm. If the value of $\phi$ is sufficiently close to $1$ however, the rate is driven by the noise and not by the convergence of the Standard Gossip algorithm. This value is $\phi = 0.98$ for cycle graph, and $\phi=0.995$ for the random geometric graph in the plots we present.

Looking at the individual runs for small values of $\phi$ in Figure~\ref{RGG100NoiseErr}b, we see some variance in terms of when the asymptotic rate is realized. We would like to point out that this \emph{does not} provide additional insight into whether specific small values of $\phi$ are in general better for the following reason. The Standard Gossip algorithm is itself a randomized algorithm, with an inherent uncertainty in the convergence of any particular run. If we ran the algorithms multiple times, we observe variance in the evolution of the suboptimality of similar magnitude, just as what we see in the figure. Hence, the variance is expected, and not significantly influenced by the noise.

\begin{figure}[H]
\centering
\begin{subfigure}{.3\textwidth}
  \centering
  \includegraphics[width=1\linewidth]{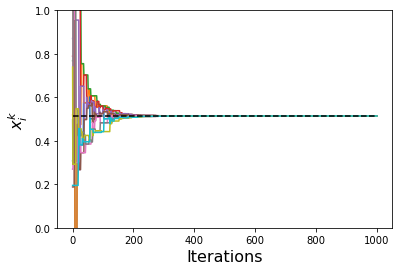}
  \caption{$\phi=0.001$}
\end{subfigure}
\begin{subfigure}{.3\textwidth}
  \centering
  \includegraphics[width=1\linewidth]{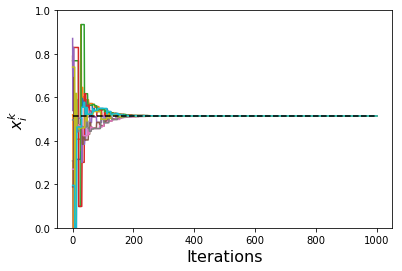}
  \caption{$\phi=0.01$}
\end{subfigure}
\begin{subfigure}{.3\textwidth}
  \centering
  \includegraphics[width=1\linewidth]{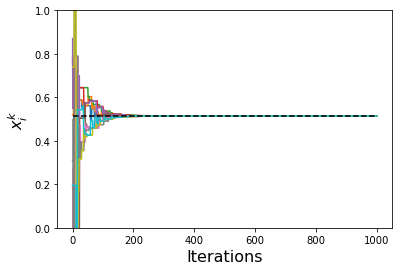}
  \caption{ $\phi=0.1$}
\end{subfigure}\\
\begin{subfigure}{.3\textwidth}
  \centering
  \includegraphics[width=1\linewidth]{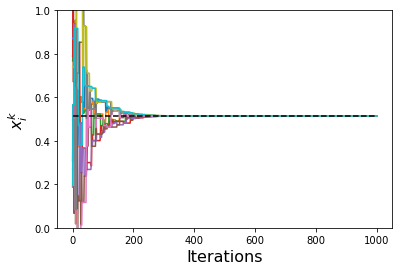}
  \caption{ $\phi=0.5$}
\end{subfigure}
\begin{subfigure}{.3\textwidth}
  \centering
  \includegraphics[width=1\linewidth]{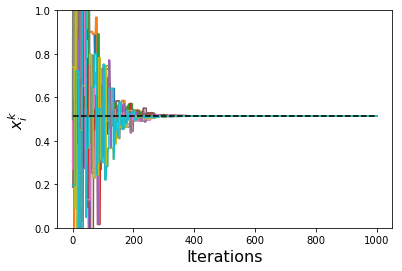}
  \caption{ $\phi=0.9$}
\end{subfigure}
\begin{subfigure}{.3\textwidth}
  \centering
  \includegraphics[width=1\linewidth]{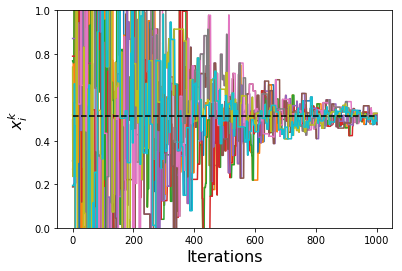}
  \caption{ $\phi=0.98$}
\end{subfigure}
\caption{Trajectories of the values of $x_i^t$ for Controlled Noise Insertion run on the cycle graph for different values of $\phi$.}
\label{Cycle10Noise}
\end{figure}

\begin{figure}[H]
\centering
\begin{subfigure}{.45\textwidth}
  \centering
  \includegraphics[width=1\linewidth]{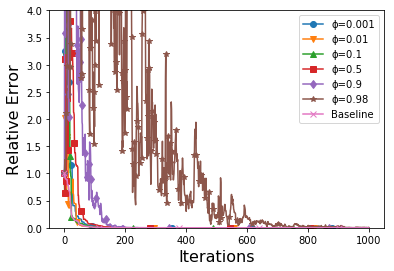}
  \caption{Linear Scale}
\end{subfigure}%
\begin{subfigure}{.45\textwidth}
  \centering
  \includegraphics[width=1\linewidth]{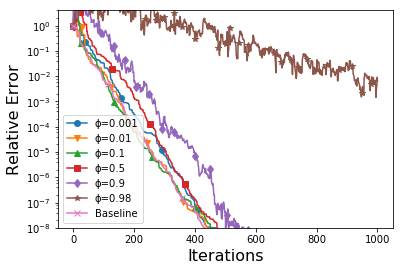}
  \caption{Logarithmic Scale}
\end{subfigure}
\caption{Convergence of the Controlled Noise Insertion run on the cycle graph for different values of $\phi$.}
\label{Cycle10NoiseErr}
\end{figure}

\begin{figure}[H]
\centering
\begin{subfigure}{.3\textwidth}
  \centering
  \includegraphics[width=1\linewidth]{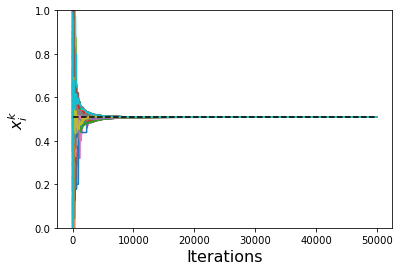}
  \caption{ $\phi=0.001$}
\end{subfigure}%
\begin{subfigure}{.3\textwidth}
  \centering
  \includegraphics[width=1\linewidth]{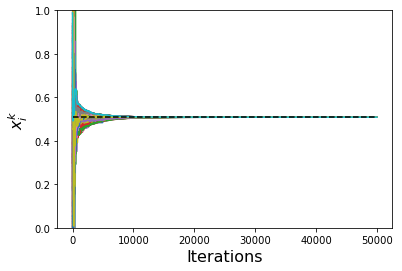}
  \caption{$\phi=0.01$}
\end{subfigure}
\begin{subfigure}{.3\textwidth}
  \centering
  \includegraphics[width=1\linewidth]{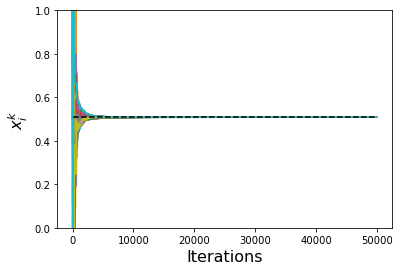}
  \caption{$\phi=0.1$}
\end{subfigure}\\
\begin{subfigure}{.3\textwidth}
  \centering
  \includegraphics[width=1\linewidth]{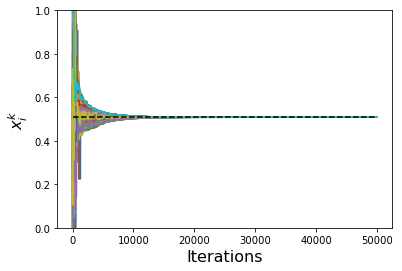}
  \caption{ $\phi=0.5$}
\end{subfigure}
\begin{subfigure}{.3\textwidth}
  \centering
  \includegraphics[width=1\linewidth]{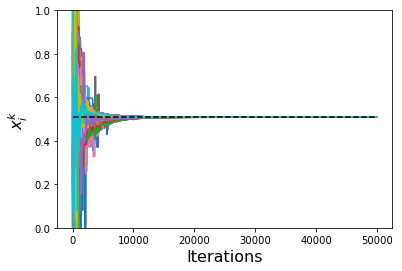}
  \caption{ $\phi=0.9$}
\end{subfigure}
\begin{subfigure}{.3\textwidth}
  \centering
  \includegraphics[width=1\linewidth]{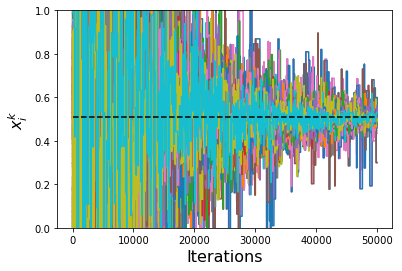}
  \caption{ $\phi=0.995$}
\end{subfigure}
\caption{Trajectories of the values of $x_i^t$ for Controlled Noise Insertion run on the random geometric graph for different values of $\phi$.}
\label{RGG100Noise}
\end{figure}

\begin{figure}[H]
\centering
\begin{subfigure}{.45\textwidth}
  \centering
  \includegraphics[width=1\linewidth]{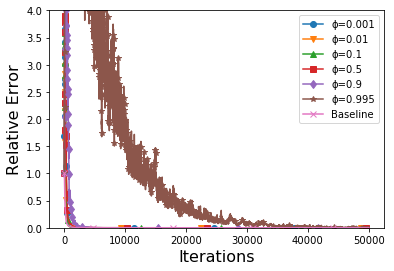}
  \caption{Linear Scale}
\end{subfigure}%
\begin{subfigure}{.45\textwidth}
  \centering
  \includegraphics[width=1\linewidth]{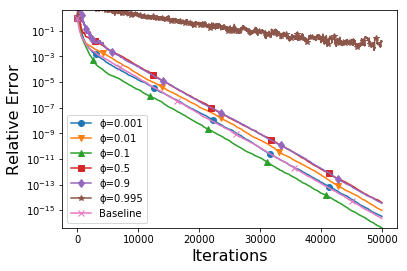}
  \caption{Logarithmic Scale}
\end{subfigure}
\caption{Convergence of the Controlled Noise Insertion run on the random geometric graph for different values of $\phi$.}
\label{RGG100NoiseErr}
\end{figure}

\subsubsection{Variance 1 and different decay rates}

In this section, we perform similar experiment as above, but let the values $\phi_i$ be vary for different nodes $i$. This is controlled by the choice of $\gamma$ as in \eqref{Eq: phi_i def}. Note that by decreasing $\gamma$, we increase $\phi_i$, and thus smaller $\gamma$ means the noise decays at a slower rate. Here, due to the regular structure of the cycle graph, we present only results for the random geometric graph.

It is not straightforward to compare this setting with the setting of identical $\phi_i$, and we return to it in the next section. Here we only remark that we again see the existence of a threshold predicted by theory, beyond which the convergence is dominated by the inserted noise. Otherwise, we recover the rate of the Standard Gossip algorithm.

\begin{figure}[H]
\centering
\begin{subfigure}{.3\textwidth}
  \centering
  \includegraphics[width=1\linewidth]{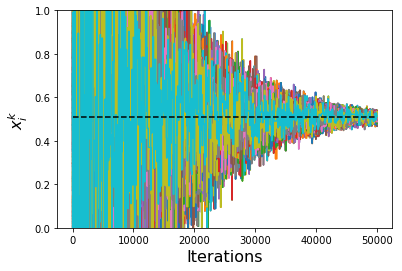}
  \caption{$\gamma=0.1$}
\end{subfigure}
\begin{subfigure}{.3\textwidth}
  \centering
  \includegraphics[width=1\linewidth]{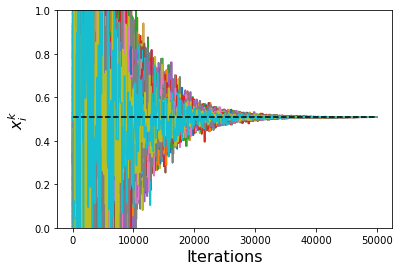}
  \caption{$\gamma=0.2$}
\end{subfigure}
\begin{subfigure}{.3\textwidth}
  \centering
  \includegraphics[width=1\linewidth]{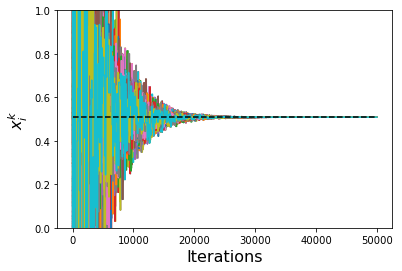}
  \caption{$\gamma=0.3$}
\end{subfigure}\\
\begin{subfigure}{.3\textwidth}
  \centering
  \includegraphics[width=1\linewidth]{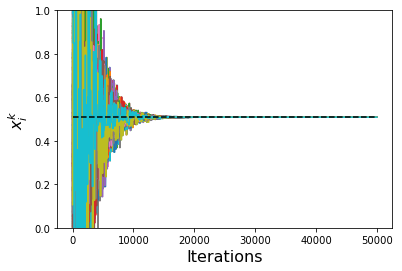}
  \caption{$\gamma=0.5$}
\end{subfigure}
\begin{subfigure}{.3\textwidth}
  \centering
  \includegraphics[width=1\linewidth]{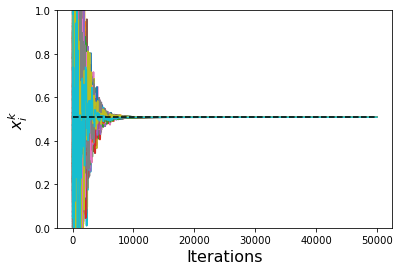}
  \caption{$\gamma=1$}
\end{subfigure}
\begin{subfigure}{.3\textwidth}
  \centering
  \includegraphics[width=1\linewidth]{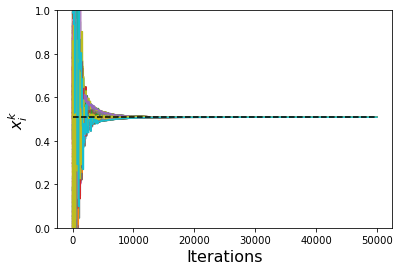}
  \caption{$\gamma=2$}
\end{subfigure}\\
\caption{Trajectories of the values of $x_i^t$ for Controlled Noise Insertion run on the random geometric graph for different values of $\phi_i$, controlled by $\gamma$.}
\label{Cycle10Gamma}
\end{figure}

\begin{figure}[H]
\centering
\begin{subfigure}{.45\textwidth}
  \centering
  \includegraphics[width=1\linewidth]{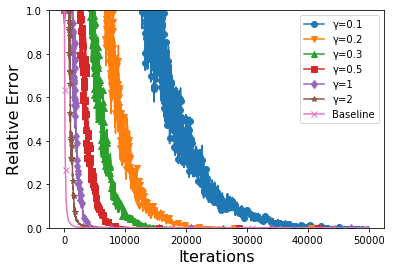}
  \caption{Normal Scale}
\end{subfigure}%
\begin{subfigure}{.45\textwidth}
  \centering
  \includegraphics[width=1\linewidth]{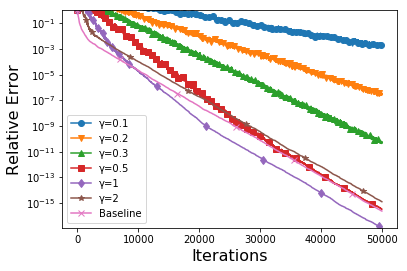}
  \caption{Logarithmic Scale}
\end{subfigure}
\caption{Convergence of the Controlled Noise Insertion run on the random geometric graph for different values of $\phi_i$, controlled by $\gamma$.}
\label{Cycle10GammaErr}
\end{figure}

\subsubsection{Impact of varying $\phi_i$}

In this experiment, we demonstrate the practical utility of letting the rate of decay $\phi_i$ to be different on each node $i$. In order to do so, we run experiment on the random geometric graph and compare the settings investigated in the previous two sections --- the noise decay rate driven by $\phi$, or by $\gamma$.

In first place, we choose the values of $\phi_i$ such that that the two factors in Corollary~\ref{C: noisy gossip special} are equal. For the particular graph we used, this corresponds to $\gamma \approx 0.17$ with $\phi_i=\sqrt{1-\frac{\ac(\cG)}{2d_i}}$. Second, we make the factors equal, but with constraint of having $\phi_i$ to be equal for all $i$. This corresponds to $\phi_i \approx 0.983$ for all $i$.

The performance for a large number of iterations is displayed in left side of Figure~\ref{fig:comparison_varying_phi}. We see that theabove two choices indeed yield very similar practical performance, which also eventually matches the rate predicted by theory. For complete comparison, we also include performance of the Standard Gossip algorithm.

The important message is conveyed in the histogram in the right side of Figure~\ref{fig:comparison_varying_phi}. The histogram shows the distribution of the values of $\phi_i$ for different nodes $i$. The minimum of these values is what we needed in the case of identical $\phi_i$ for all $i$. However, most of the values are significantly higher. This means, that if we allow the noise decay rates to depend on the number of neighbours, we are able to increase the amount of noise inserted, without sacrificing practical performance. This is beneficial, as more noise will likely be beneficial for any formal notion of protection of the initial values.

\begin{figure}[H]
\centering
\begin{subfigure}{.45\textwidth}
  \centering
  \includegraphics[width=1\linewidth]{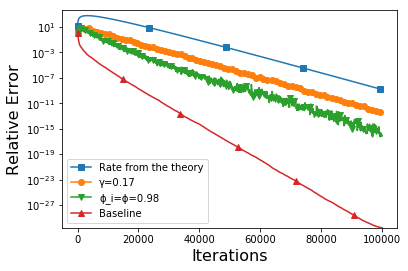}
%  \caption{Linear Scale}
\end{subfigure}%
\begin{subfigure}{.45\textwidth}
  \centering
  \includegraphics[width=1\linewidth]{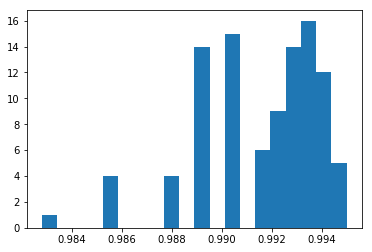}
%  \caption{Logarithmic Scale}
\end{subfigure}

\caption{Left: Performance of the noise oracle with noise decrease rate chosen according to Corollary \ref{C: noisy gossip special}. Right: Histogram of of distribution of $\phi_i$}
\label{fig:comparison_varying_phi}
\end{figure}

\section{Conclusion}
\label{sec:conclusion}

In this work we addressed the Average Consensus problem via novel asynchronous randomized gossip algorithms. We propose algorithmic tools for protection of the private values each node in the network holds initially. However, we do not quantify any formal notion of privacy protection achievable using these tools; this is left for future research.

In particular, we propose two ways to achieve this goal. First, which we believe is the first of its kind, weakens the oracle used in the gossip framework, to provide only categorical (or even binary) information to each participating node about the value of the other node. In the second approach, we systematically inject and withdraw noise throughout the iterations, so as to ensure convergence to the average consensus value. In all cases, we provide explicit convergence rates and evaluate practical convergence on common simulated network topologies.

\bibliographystyle{plain}
\bibliography{biblio}

\newpage
\appendix

\section{Proofs for Section~\ref{sec:introduction}}
\subsection{Proof of Lemma \ref{L: rel measures}}
\begin{lem}

\begin{equation}
\sum_{i=1}^n \left( \sum_{j=1}^n (x_j-x_i) \right)^2
=
\frac{n}{2}\sum_{i=1}^n \sum_{j=1}^n(x_j-x_i)^2 
\label{Eq: sum lemma}
\end{equation}

\end{lem}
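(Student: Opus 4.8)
The plan is to reduce both sides to a common expression in the two elementary symmetric quantities $S \eqdef \sum_{i=1}^n x_i$ and $P \eqdef \sum_{i=1}^n x_i^2$, and then observe that they coincide. No passage to the mean $\bar{x}$ is strictly necessary, though it motivates the shape of the answer: both sides are really $n^2$ times the (unnormalized) variance of $x$.

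First I would simplify the inner sum on the left-hand side. Since the index $i$ is fixed in the inner sum, $\sum_{j=1}^n (x_j - x_i) = S - n x_i$, so the left-hand side equals $\sum_{i=1}^n (S - n x_i)^2$. Expanding the square and summing term by term gives $\sum_{i=1}^n \left( S^2 - 2 n S x_i + n^2 x_i^2 \right) = n S^2 - 2 n S \cdot S + n^2 P = n^2 P - n S^2$, where I used $\sum_i x_i = S$ and $\sum_i x_i^2 = P$.

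Next I would expand the double sum on the right-hand side. Using $(x_j - x_i)^2 = x_i^2 - 2 x_i x_j + x_j^2$ and summing over both indices, the cross term collapses via $\sum_{i=1}^n \sum_{j=1}^n x_i x_j = \left(\sum_i x_i\right)\left(\sum_j x_j\right) = S^2$, while each square term contributes a factor $n$ from the free summation index: $\sum_{i}\sum_{j} x_i^2 = nP$ and likewise $\sum_i \sum_j x_j^2 = nP$. Hence $\sum_{i=1}^n \sum_{j=1}^n (x_j - x_i)^2 = 2nP - 2S^2$, and multiplying by $\tfrac{n}{2}$ yields $n^2 P - n S^2$, which matches the left-hand side exactly.

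There is no genuine obstacle here; the statement is a routine algebraic identity, and the only point requiring a little care is the index bookkeeping --- in particular keeping straight which summations produce a factor of $n$ (the free index) versus which factor as $S^2$ (the product of two independent sums). I would present the two expansions side by side and conclude by comparing the final forms $n^2P - nS^2$.
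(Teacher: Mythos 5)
Your proof is correct and follows essentially the same route as the paper's: both expand the inner sum as $\sum_j x_j - n x_i$, expand the double sum on the right via $(x_j-x_i)^2 = x_i^2 - 2x_ix_j + x_j^2$, and verify that both sides reduce to $n^2\sum_i x_i^2 - n\bigl(\sum_i x_i\bigr)^2$ (your $n^2P - nS^2$). The $S,P$ shorthand is a minor cosmetic difference only.
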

\begin{proof}
Using simple algebra we have
\begin{eqnarray*}
\sum_{i=1}^n \left( \sum_{j=1}^n (x_j-x_i) \right)^2
&=&
\sum_{i=1}^n \left( \sum_{j=1}^n x_j-nx_i \right)^2
\\
&=&
\sum_{i=1}^n \left( \left( \sum_{j=1}^n x_j \right)^2 +n^2x_i^2-2nx_i \left( \sum_{j=1}^n x_j \right) \right)
\\
&=&
n\left(\sum_{j=1}^n x_j \right)^2
+
n^2\sum_{i=1}^nx_i^2
-2n\left(  \sum_{j=1}^n x_j \right)^2
\\
&=&
n^2\sum_{i=1}^nx_i^2
-n\left( \sum_{i=1}^n x_i \right)^2 .
\end{eqnarray*}

Manipulating right hand side of \eqref{Eq: sum lemma} we obtain

\begin{eqnarray*}
\frac{n}{2}\sum_{i=1}^n \sum_{j=1}^n(x_j-x_i)^2 &=&
\frac{n}{2}\sum_{i=1}^n \sum_{j=1}^n\left(x_j^2+x_i^2-2x_ix_j\right)
\\
&=& 
n^2\sum_{i=1}^nx_i^2 -n\sum_{i=1}^n \sum_{j=1}^nx_ix_j
\quad =\quad 
n^2\sum_{i=1}^nx_i^2
-n\left( \sum_{i=1}^n x_i \right)^2 .
\end{eqnarray*}

Clearly, LHS and RHS of \eqref{Eq: sum lemma} are equal.

\end{proof}

In order to show \eqref{Eq: equality} it is enough to notice that

\begin{eqnarray*}
\|\bar{c}\ones - x\|^2 
&=&  \sum_{i=1}^n (\bar{c}-x_i)^2 
\quad = \quad 
 \sum_{i=1}^n \left(\frac{1}{n}\sum_{j=1}^nx_j  -x_i\right)^2 
 \\
&=&
 \sum_{i=1}^n \left(\sum_{j=1}^n \frac{1}{n} (x_j  -x_i)\right)^2 
 \quad
\overset{\eqref{Eq: sum lemma}}{=} \quad
 \frac{1}{2} \sum_{i=1}^n \sum_{j=1}^n \frac{1}{n} (x_j  -x_i)^2 
 \\
&=& \frac{1}{2n} \sum_{i=1}^n \sum_{j=1}^n  (x_j  -x_i)^2.
\end{eqnarray*}

Note that we have 
$$
\frac{1}{nm}\left(\sum_{e=(i,j)\in \cE}|x_i-x_j|\right)^2 \leq \frac{1}{n}\sum_{e\in \cE}(x_i-x_j)^2\leq \frac{1}{n}\sum_{(i,j)}(x_i-x_j)^2 \stackrel{\eqref{Eq: equality}}{=} \|\bar{c} \ones -x\|^2,
$$
which proves \eqref{Eq: s upper bound}.  On the other hand, we have
$$
\frac{1}{\ac(\cG)}  \left(\sum_{e=(i,j)\in \cE}|x_i-x_j|\right)^2
\geq
\frac{1}{\ac(\cG)} \sum_{e\in \cE} \left(x_i-x_j \right)^2
\stackrel{\eqref{eq:8998gd98gikd}}{\geq}
\| \bar{c} \ones -x \|^2,
$$
which concludes \eqref{Eq: s lower bound}. Inequality \eqref{Eq: delta bound} holds trivially.  

\section{Proofs for Section~\ref{sec:gossip}} \label{sec:App-Sec3}
 
We now perform the analysis of Algorithm~\algG.

\subsection{Proof of Lemma~\ref{lem:beta}}

\begin{lem}
The eigenvalues of $\tilde{\mL}=n\mI - \ones \ones^\top$ are $\{0,n,n,\dots,n\}$
\label{L: eigenvalue of tilde L}
\end{lem}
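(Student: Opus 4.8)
The plan is to exploit the fact that $\tilde{\mL} = n\mI - \ones\ones^\top$ is a rank-one perturbation of a scalar multiple of the identity, so its spectrum follows immediately from the spectrum of the rank-one matrix $\ones\ones^\top$. First I would identify the eigenstructure of $\ones\ones^\top$: since this matrix has rank one, it has a single nonzero eigenvalue, and the rest are zero. Concretely, $\ones$ itself is an eigenvector, because
\[
\ones\ones^\top \ones = \ones\,(\ones^\top \ones) = n\,\ones,
\]
giving eigenvalue $n$. Any vector $v$ orthogonal to $\ones$ (i.e.\ $\ones^\top v = 0$) satisfies $\ones\ones^\top v = \ones\,(\ones^\top v) = 0$, so the orthogonal complement of $\ones$, which has dimension $n-1$, is the eigenspace for eigenvalue $0$. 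Hence the eigenvalues of $\ones\ones^\top$ are $\{n,0,0,\dots,0\}$.

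Next I would transfer this to $\tilde{\mL}$ by noting that adding the scalar matrix $n\mI$ preserves all eigenvectors and shifts every eigenvalue by $n$. For the eigenvector $\ones$ we get $\tilde{\mL}\ones = n\ones - n\ones = 0$, i.e.\ eigenvalue $0$; for any $v$ orthogonal to $\ones$ we get $\tilde{\mL}v = n v - 0 = n v$, i.e.\ eigenvalue $n$ with multiplicity $n-1$. Collecting these, the spectrum of $\tilde{\mL}$ is exactly $\{0,n,n,\dots,n\}$, as claimed. Since $\tilde{\mL}$ is symmetric, these $n$ eigenvalues (counted with multiplicity, via the orthogonal decomposition $\R^n = \mathrm{span}(\ones)\oplus \ones^\perp$) account for the whole spectrum, so nothing is missed.

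There is essentially no hard step here; the only thing to be careful about is to verify that the eigenspaces $\mathrm{span}(\ones)$ and $\ones^\perp$ together span $\R^n$, which is immediate since $\tilde{\mL}$ is symmetric and these subspaces are orthogonal and of complementary dimensions $1$ and $n-1$. I expect the ``obstacle,'' if any, to be purely expository: making explicit that the rank-one structure of $\ones\ones^\top$ forces the $(n-1)$-fold multiplicity of the eigenvalue $0$ before the shift. This lemma is presumably invoked to relate the Laplacian $\mL = \mA^\top\mA$ to the averaging operator and thereby establish Lemma~\ref{lem:beta}, $\beta(\cG) = n/\ac(\cG)$.
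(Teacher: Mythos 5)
Your proof is correct and follows essentially the same route as the paper: both verify that $\tilde{\mL}\ones = 0$ and that every vector orthogonal to $\ones$ is an eigenvector with eigenvalue $n$, then count multiplicities via the orthogonal decomposition $\R^n = \mathrm{span}(\ones)\oplus\ones^\perp$. Your preliminary framing through the spectrum of the rank-one matrix $\ones\ones^\top$ is just a slightly different way of packaging the same calculation (and incidentally avoids a sign typo present in the paper's own computation).
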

\begin{proof}
Clearly, $\tilde{\mL} \ones=0$. Consider some vector $x$ such that $\langle x, \ones\rangle=0$. Then, $\tilde{\mL}x= n\mI x-\ones\ones^{\top} x=nx+\ones\ones^{\top} x=nx$ thus $x$ is an eigenvector corresponding to eigenvalue $n$. Thus, we can pick $n-1$ linearly independent eigenvectors of $\tilde{\mL}$ corresponding to eigenvalue $n$,  which concludes the proof. 
\end{proof} 

The Laplacian matrix of $\cG$ is the matrix $\mL=\bA^\top \bA$. We have 
$\mL_{ii} = d_i$ (degree of vertex $i$), $\mL_{ij} = \bL_{ji} = -1$ if $(i,j)\in \cE$ and $\bL_{ij} = 0$ otherwise. A simple computation reveals that for any $x\in \R^n$ we have

\[x^\top \bL x = \sum_{e=(i,j)\in \cE} (x_i-x_j)^2.\]

Let $\tilde{\bA}$ be the $n(n-1)/2 \times n$  matrix corresponding to the complete graph $\tilde{\cG}$ on $\cV$. Let $\tilde{\bL} = \tilde{\bA}^\top \tilde{\bA}$ be its Laplacian. We have $\tilde{\bL}_{ii}=n-1$ for all $i$ and $\tilde{\bL}_{ij} =-1$ for $i\neq j$. So, $\tilde{\bL} = n \mI - \ones \ones^\top$.  Then

\[x^\top \tilde{\bL} x = n\|x\|^2 -\left(\sum_{i=1}^n x_i\right)^2=\sum_{(i,j)} (x_i - x_j)^2.\]

Inequality \eqref{eq:hdgugvej} can therefore be recast as follows:

\[x^\top (n \mI - \ones \ones^\top) x \leq  x^\top \beta(\cG) \bL x, \qquad x\in \R^n.\]

Let $\beta=\beta(\cG)$. Note that both $\tilde{\mL}$ and $\beta \mL$ are Hermitian thus have real eigenvalues and there exist an orthonormal basis of their eigenvectors. Suppose that $\{x_1,\dots x_n\}$ are eigenvectors of $\beta L$ corresponding to eigenvalues $\lambda_1(\beta \mL),\lambda_2(\beta \mL) \dots ,\lambda_n(\beta \mL)$. Without loss of generality assume that these eigenvectors form an orthonormal basis and $\lambda_1(\beta \mL)\geq \dots \geq \lambda_n(\beta \mL)$

Clearly, $\lambda_n(\beta \mL)=0$, $x_n=\ones/\sqrt{n}$, and $\lambda_{n-1}(\beta \mL)=n$. Lemma~\ref{L: eigenvalue of tilde L}
states that eigenvalues of $\tilde{\mL}$ are $\{0,n,n,\dots,n\}$.

 One can easily see that eigenvector corresponding to zero eigenvalue of $\tilde{\mL}$ is $x_n$. Note that eigenvectors $x_1,\dots,x_{n-1}$ generate an eigenspace corresponding to eigenvalue $n$ of $\tilde{\mL}$. 

Consider some $x=\sum_{i=1}^n c_ix_i$, $c_i \in \R$ for all $i$. Then we have

$$
x^{\top}\tilde{\mL}x=
\sum_{i=1}^n \lambda_i\left(\tilde{\mL}\right)c_i^2\leq
\sum_{i=1}^n \lambda_i\left(\beta \mL \right)c_i^2=
 x^{\top}\beta \mL x,
$$
which concludes the proof. 

\subsection{Two Lemmas}

We first establish two lemmas which will be needed to prove Theorem~\ref{thm:G}.

\begin{lem} \label{lem:09u9djh9ffs}
Assume that edge $e=(i,j)$ is selected in iteration $t$ of Algorithm~\algG. Then \begin{equation} \label{eq:89g9s8guff} D(y^{t+1}) - D(y^t) = \tfrac{1}{4}(x^t_i-x^t_j)^2.\end{equation}
\end{lem}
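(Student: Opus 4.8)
The plan is to specialize the general one-variable update formula from Lemma~\ref{lem:98y98yss} to the exact line-search stepsize used by the Standard Gossip dual algorithm, and simply read off the resulting increase in the dual objective. Lemma~\ref{lem:98y98yss} already tells us that for any $\lambda\in\R$ and $z=y^t+\lambda f_e$ with $e=(i,j)$,
\[
D(z)-D(y^t) = -\lambda(x^t_i-x^t_j)-\lambda^2.
\]
The only extra ingredient needed is the value of $\lambda$ actually taken by Algorithm~\algG\ in its dual form, which is the exact line-search maximizer already identified in the text immediately after Lemma~\ref{lem:98y98yss}, namely $\lambda^t=(x^t_j-x^t_i)/2$.

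First I would substitute $\lambda=\lambda^t=(x^t_j-x^t_i)/2$ into the formula above. This gives
\[
D(y^{t+1})-D(y^t) = -\frac{x^t_j-x^t_i}{2}\,(x^t_i-x^t_j)-\left(\frac{x^t_j-x^t_i}{2}\right)^2.
\]
The first term is $\tfrac{1}{2}(x^t_i-x^t_j)^2$ and the second is $-\tfrac{1}{4}(x^t_i-x^t_j)^2$, so the two combine to $\tfrac{1}{4}(x^t_i-x^t_j)^2$, which is exactly \eqref{eq:89g9s8guff}. That is the whole computation.

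There is essentially no obstacle here: the lemma is a one-line corollary of Lemma~\ref{lem:98y98yss} once one plugs in the maximizing stepsize. The only point worth a sentence of justification is confirming that the stepsize $\lambda^t=(x^t_j-x^t_i)/2$ is indeed the maximizer of $\lambda\mapsto -\lambda(x^t_i-x^t_j)-\lambda^2$; this follows by setting the derivative $-(x^t_i-x^t_j)-2\lambda$ to zero, and the concavity in $\lambda$ (the $-\lambda^2$ term) guarantees it is the global maximum. Thus the selected edge's update achieves the optimal possible increase, and the stated identity holds deterministically given that edge $e=(i,j)$ is selected.
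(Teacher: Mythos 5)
Your proposal is correct and is essentially identical to the paper's own proof: both invoke Lemma~\ref{lem:98y98yss} and use the fact that the dual form of Algorithm~\algG\ takes the exact line-search step, so that $D(y^{t+1})-D(y^t)=\max_{\lambda}\left(-\lambda(x^t_i-x^t_j)-\lambda^2\right)=\tfrac{1}{4}(x^t_i-x^t_j)^2$. The paper phrases this as evaluating the maximum directly, while you substitute the maximizer $\lambda^t=(x^t_j-x^t_i)/2$ and verify optimality by differentiation, which is the same computation.
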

\begin{proof} We have $y^{t+1} = y^t + \lambda^t f_e$ where $\lambda^t$ is chosen so that $D(y^{t+1})-D(y^t)$ is maximized. Applying Lemma~\ref{lem:98y98yss}, we have
\[
D(y^{t+1}) - D(y^t) =\max_{\lambda} -\lambda (x^t_i-x^t_j) - \lambda^2\\
=  \tfrac{1}{4}(x_i^t - x_j^t)^2.
\]
\end{proof}

\begin{lem} \label{L: D bound by beta} Let  $x\in \R^n$ such that $\frac{1}{n}\sum_i x_i = \bar{c}$. Then \begin{equation}
\label{eq:8998gd98gikd}\frac{1}{2}\|\bar{c}\ones - x\|^2 \leq \frac{1}{2\ac(\cG)} \sum_{e=(i,j)\in \cE} (x_i-x_j)^2.
\end{equation}
\end{lem}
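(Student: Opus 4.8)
The plan is to reduce the statement to three facts already established in the excerpt: the exact identity \eqref{Eq: equality} relating $\|\bar{c}\ones - x\|^2$ to a sum of squared pairwise differences, the definition of $\beta(\cG)$ in \eqref{eq:hdgugvej}, and the identity $\beta(\cG) = n/\ac(\cG)$ from Lemma~\ref{lem:beta}.

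First I would invoke the hypothesis $\tfrac1n\sum_i x_i = \bar{c}$, which guarantees that $\bar{c}$ is exactly the mean of the entries of $x$, so that \eqref{Eq: equality} applies verbatim, giving
\[
\|\bar{c}\ones - x\|^2 = \frac{1}{2n}\sum_{i=1}^n\sum_{j=1}^n (x_j-x_i)^2 .
\]
The one bookkeeping point to handle carefully is converting the doubly-indexed sum over ordered pairs into the sum $\sum_{(i,j)}$ over unordered pairs used in \eqref{eq:hdgugvej}: since each unordered pair is counted twice in $\sum_i\sum_j$ and the diagonal terms vanish, $\tfrac{1}{2n}\sum_i\sum_j(x_j-x_i)^2 = \tfrac1n\sum_{(i,j)}(x_j-x_i)^2$.

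Next I would apply the definition of $\beta(\cG)$ directly: \eqref{eq:hdgugvej} bounds $\sum_{(i,j)}(x_j-x_i)^2$ by $\beta(\cG)\sum_{(i,j)\in\cE}(x_j-x_i)^2$, so that
\[
\|\bar{c}\ones - x\|^2 \leq \frac{\beta(\cG)}{n}\sum_{(i,j)\in\cE}(x_j-x_i)^2 .
\]
Finally, substituting $\beta(\cG)=n/\ac(\cG)$ from Lemma~\ref{lem:beta} collapses the factor $\beta(\cG)/n$ to $1/\ac(\cG)$, and dividing both sides by $2$ yields the claim.

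There is no substantive obstacle here; the entire content has been front-loaded into Lemma~\ref{lem:beta} and the identity \eqref{Eq: equality}. The only place to be attentive is the combinatorial normalization (factors of $n$ versus $2n$, ordered versus unordered pairs), so the cleanest route is to express everything through the quadratic forms $x^\top\tilde{\mL}x = \sum_{(i,j)}(x_i-x_j)^2$ and $x^\top\mL x = \sum_{(i,j)\in\cE}(x_i-x_j)^2$, in terms of which \eqref{eq:hdgugvej} reads $x^\top\tilde{\mL}x \le \beta(\cG)\, x^\top\mL x$, making the chain of equalities and the single inequality transparent.
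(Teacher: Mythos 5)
Your proof is correct and follows exactly the same route as the paper's: the identity \eqref{Eq: equality}, the conversion from ordered to unordered pairs, the defining inequality \eqref{eq:hdgugvej} for $\beta(\cG)$, and the substitution $\beta(\cG)=n/\ac(\cG)$ from Lemma~\ref{lem:beta}. All normalization factors check out, so there is nothing to add.
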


\begin{proof}
\begin{eqnarray*}\frac{1}{2}\|\bar{c}\ones - x\|^2 
&\overset{\eqref{Eq: equality}}{=}& \frac{1}{4n} \sum_{i=1}^n \sum_{j=1}^n  (x_j  -x_i)^2
\quad = \quad \frac{1}{2n} \sum_{(i,j)} (x_j  -x_i)^2\\
&\overset{\eqref{eq:hdgugvej}}{\leq}& \frac{\beta(\cG)}{2n} \sum_{e=(i,j)\in \cE} (x_i-x_j)^2
\quad \overset{\text{Lemma}~\ref{lem:beta}}{=} \quad
\frac{1}{2\ac(\cG)} \sum_{e=(i,j)\in \cE} (x_i-x_j)^2
\end{eqnarray*}

\end{proof}

\subsection{Proof of Theorem~\ref{thm:G}}

Having established Lemmas \ref{lem:09u9djh9ffs} and \ref{L: D bound by beta}, we can now proceed with the proof of Theorem~\ref{thm:G}:

\begin{eqnarray*}\E{D(y^*) - D(y^{t+1}) \;|\; y^t} &=& D(y^*) - D(y^{t}) - \E{D(y^{t+1}) - D(y^{t}) \;|\; y^t} 
\\
&\overset{\eqref{eq:89g9s8guff}}{=}&  D(y^*) - D(y^{t}) - \sum_{e=(i,j)\in \cE}\frac{1}{4m} (x^t_i-x^t_j)^2\\
&\overset{\eqref{eq:99d8gds}}{=}& \frac{1}{2}\|\bar{c}\ones - x^t\|^2- \sum_{e=(i,j)\in \cE}\frac{1}{4m} (x^t_i-x^t_j)^2\\
&\overset{\eqref{eq:8998gd98gikd}}{\leq} &  \left(1-\frac{\ac(\cG)}{2m }\right)\frac{1}{2}\|\bar{c}\ones - x^t\|^2\\
&\overset{\eqref{eq:99d8gds}}{=}& \left(1-\frac{\ac(\cG)}{2m}\right)\left(D(y^*) - D(y^{t}) \right) \ .
\end{eqnarray*}

Taking expectation again, we get the recursion
$$\E{D(y^*) - D(y^{t+1}) } \quad \leq \quad \left(1-\frac{\ac(\cG)}{2m}\right)\E{D(y^*) - D(y^{t}) }.$$

\section{Proofs for Section~\ref{sec:Private}}

\subsection{Proof of Theorem~\ref{thm:jhs988sh}}

\begin{lem} \label{L: optimal stepsizes binary}
 Fix $k\geq 0$ and let $R >0$. Then
\[\min_{\lambda = (\lambda^0,\dots,\lambda^k)\in \R^{k+1}} \frac{R + \beta^k}{\alpha^k} = 2\sqrt{\frac{R}{k+1}},\]
and the optimal solution is given by $\lambda^t = \sqrt{\tfrac{R}{k+1}}$ for all $t$.
\end{lem}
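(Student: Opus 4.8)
The plan is to decouple the minimization into two nested stages: first optimize the \emph{shape} of the vector $\lambda$ while holding the sum $\alpha^k$ fixed, and then optimize over the value of that sum. Throughout I would restrict attention to the relevant domain of positive stepsizes (equivalently $\alpha^k>0$), consistent with the hypothesis of Theorem~\ref{thm:jhs988sh}(ii). This restriction is the one point that genuinely needs care: over all of $\R^{k+1}$ the objective is not bounded below, since driving $\alpha^k \to 0^-$ while the numerator stays $\geq R>0$ sends the ratio to $-\infty$, so the minimization is really being taken over the region $\alpha^k>0$.

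First I would fix the value $S\eqdef \alpha^k = \sum_{t=0}^k \lambda^t>0$ and minimize the numerator, i.e.\ minimize $\beta^k=\sum_{t=0}^k (\lambda^t)^2$ subject to $\sum_{t=0}^k \lambda^t = S$. By the Cauchy--Schwarz inequality (equivalently, the QM--AM inequality), for any reals with this fixed sum,
\[\sum_{t=0}^k (\lambda^t)^2 \;\geq\; \frac{1}{k+1}\Big(\sum_{t=0}^k \lambda^t\Big)^2 \;=\; \frac{S^2}{k+1},\]
with equality precisely when all coordinates are equal, $\lambda^t = S/(k+1)$. Hence for each fixed $S>0$ the minimal value of the objective equals $\big(R+S^2/(k+1)\big)/S$.

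Next I would carry out the remaining one-dimensional minimization of
\[g(S)\eqdef \frac{R+S^2/(k+1)}{S} = \frac{R}{S}+\frac{S}{k+1}\]
over $S>0$. By the AM--GM inequality,
\[\frac{R}{S}+\frac{S}{k+1} \;\geq\; 2\sqrt{\frac{R}{S}\cdot\frac{S}{k+1}} \;=\; 2\sqrt{\frac{R}{k+1}},\]
with equality iff $R/S = S/(k+1)$, that is $S=\sqrt{R(k+1)}$ (the stationary point of $g$, since $g'(S)=-R/S^2+1/(k+1)$). Substituting this optimal $S$ into the equal-coordinates condition from the first stage gives $\lambda^t = S/(k+1)=\sqrt{R/(k+1)}$ for every $t$, which is positive as required.

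Finally I would assemble the two stages. Every feasible $\lambda$ satisfies $(R+\beta^k)/\alpha^k \geq g(\alpha^k) \geq 2\sqrt{R/(k+1)}$, so $2\sqrt{R/(k+1)}$ is a valid lower bound on the joint infimum; the constant vector $\lambda^t=\sqrt{R/(k+1)}$ makes both inequalities equalities simultaneously and therefore attains it. This shows the minimum is exactly $2\sqrt{R/(k+1)}$ with the stated minimizer. The two inequalities invoked are standard and each is tight at the claimed point, so the only substantive step is the domain remark at the outset that makes the problem well posed.
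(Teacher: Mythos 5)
Your proof is correct. It takes the same two-stage ``shape times scale'' decomposition as the paper, but in transposed form: the paper writes $\lambda = r x$ with $r=\|\lambda\|$, so fixing $r$ fixes the numerator $R+r^2$, and then chooses the direction $x=\ones/\|\ones\|$ to maximize the denominator $\langle \ones, x\rangle$, finishing with one-variable calculus in $r$; you instead fix the denominator $\alpha^k=S$, minimize the numerator $\beta^k$ via Cauchy--Schwarz (equality iff all $\lambda^t$ are equal), and finish with AM--GM in $S$. The two routes are dual to each other and land on the same constant vector, and your AM--GM step merely replaces the paper's derivative computation. The one place where your write-up genuinely improves on the paper is the opening domain remark: as stated, the minimum over all of $\R^{k+1}$ does not exist, since taking $\alpha^k\to 0^-$ with numerator at least $R>0$ drives the ratio to $-\infty$, so the minimization must be read over stepsizes with $\alpha^k>0$ (consistent with Theorem~\ref{thm:jhs988sh}(ii)). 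The paper's proof silently makes the same restriction --- its claim that $x=\ones/\|\ones\|$ minimizes $\phi(rx)$ over unit vectors fails if $\langle\ones,x\rangle$ is allowed to approach $0$ from below --- so your explicit treatment of this point is a welcome tightening rather than a digression.
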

\begin{proof} Define $\phi(\lambda) = \tfrac{R+ \beta^k}{\alpha^k}$. If we write $\lambda = r x$, where $r=\|\lambda\|$ and $x$ is of unit norm, then $\phi(tx)=\tfrac{R+r^2}{r \langle \ones, x\rangle}$. Clearly, for any fixed $r$, the $x\in \R^{k+1}$  minimizing $x\mapsto \phi(rx)$ is $x=\ones/\|\ones\|$, where $\ones$ is the vector of ones in $\R^{k+1}$. It now only remains to minimize the function $r\mapsto \tfrac{R+r^2}{r \|\ones\|}$. This function is convex and differentiable. Setting the derivative to zero leads to $r = \sqrt{R}$. Combining the above, we get the optimal solution $\lambda = \frac{r }{\| \ones\|} \ones = \tfrac{\sqrt{R}}{\|  \ones \|} \ones$.
\end{proof}

Let $e=(i,j)$ be the edge selected at iteration $t\geq 0$. Applying Lemma~\ref{lem:98y98yss}, we see that
$
 D(y^{t+1}) - D(y^t)
= \lambda^t |x^t_i-x^t_j| - \left(\lambda^t\right)^2.
$ Taking expectation with respect to edge selection, we get 
\[\E{D(y^{t+1})-D(y^t) \;|\; y^t} = -\left(\lambda^t\right)^2 +  \lambda^t \cdot \frac{1}{m}\sum_{e=(i,j)\in \cE} |x^t_i - x^t_j|, \]
and taking expectation again and using the tower property, we get the identity
$\E{D(y^{t+1})-D(y^t) } = -\left(\lambda^t\right)^2 +  \lambda^t\cdot \E{L^t}.$
Therefore,
\begin{eqnarray*}
D(y^*) - D(y^0)& \geq & \E{D(y^{k+1})-D(y^0)} \\
&=& \E{\sum_{t=0}^{k}D(y^{t+1}) - D(y^t)} \\
&=& \sum_{t=0}^{k}\E{D(y^{t+1}) - D(y^t)} \quad = \quad -\sum_{t=0}^{k}\left(\lambda^t\right)^2 + \sum_{t=0}^{k}  \lambda^t \cdot \E{L^t}. \end{eqnarray*}
It remains to reshuffle the resulting inequality to obtain \eqref{Eq: binary general rate}.

We can see that part (i) follows directly. Optimality of stepsizes in (ii) is due to Lemma~\ref{L: optimal stepsizes binary}. To show (iii) we should state that

$$
\alpha^k=\sum_{t=0}^k \lambda^k=\sum_{1}^{k+1}\frac{a}{\sqrt{t}}\geq a \int_{t=1}^{k+2}t^{-1/2} dt=2a\left( \sqrt{k+2} -1\right)
$$

$$
\beta^k=\sum_{t=0}^k \left(\lambda^k\right)^2=\sum_{t=1}^{k+1}\frac{a^2}{t}\leq a^2 \int_{1/2}^{k+3/2}t^{-1} dt=a^2 \left(\log(k+3/2)+\log(2) \right) 
$$
The inequality above holds due to the fact that for $t>1/2$ we have $t^{-1}\leq \int_{t-1/2}^{t+1/2}x^{-1}dx $ since $x^{-1}$ is convex function. 

\subsection{Proof of Theorem \ref{thm: stepsize_adaptive}}
Using Lemma~\ref{lem:98y98yss} with we have
\begin{eqnarray*}
\E{D(y^{t+1})-D(y^t)\;|\; y^t}&=&-\left(\lambda^t\right)^2+\lambda^t \frac{1}{m}\sum_{e\in \cE}|x_i^t-x_j^t|
\\
&=&
\frac{1}{4m^2}\left(\sum_{e\in \cE}|x_i^t-x_j^t|\right)^2
\quad \geq \quad \frac{1}{4m^2}\sum_{e\in \cE}\left(x_i^t-x_j^t\right)^2.
\end{eqnarray*}
Taking the expectation again we obtain
\begin{equation} \label{eq: adaptive binary first}
\E{D(y^{t+1})-D(y^t)}
 \geq \frac{1}{4m^2}\E{\sum_{e\in \cE}\left(x_i^t-x_j^t\right)^2}.
\end{equation}

On the other hand, we have
\begin{eqnarray*}
D(y^{t+1})-D(y^t) &=& \left (D(y^{t+1})-D(y^*) \right) +\left(D(y^*)-D(y^t) \right)\\
&=&
\frac12 \|\overline{c}\ones-x^{t} \|^2
-
\frac12 \|\overline{c}\ones-x^{t+1} \|^2 
\\
&=&
\frac{\ac(\cG)}{4m^2}\|\overline{c}\ones-x^{t} \|^2+
\left(1-\frac{\ac(\cG)}{2m^2}\right)\frac12 \|\overline{c}\ones-x^{t} \|^2
-
\frac12 \|\overline{c}\ones-x^{t+1} \|^2 
\\
&\stackrel{\eqref{eq:8998gd98gikd}}{\leq}&
\frac{1}{4m^2}\sum_{e=(i,j)\in \cE}\left(x_i^t-x_j^t\right)^2+
\left(1-\frac{\ac(\cG)}{2m^2}\right)\frac12 \|\overline{c}\ones-x^{t} \|^2
-
\frac12 \|\overline{c}\ones -x^{t+1} \|^2 .
\end{eqnarray*}

Taking the expectation of the above and combining with \eqref{eq: adaptive binary first} we obtain the desired recursion
$$
\E{\|\overline{c}\ones-x^{t+1} \|^2} 
\quad \leq \quad
\left(1-\frac{\ac(\cG)}{2m^2}\right)
\E{\|\overline{c}\ones-x^{t} \|^2}
.$$

\subsection{Proof of Lemma~\ref{lem:09ys09y9ss}}

Let $e=(i,j)$ be the edge selected at iteration $t$. Applying Lemma~\ref{lem:98y98yss}, we see that

\[
 D(y^{t+1}) - D(y^t)
=\begin{cases} -\frac{ \epsilon}{2} (x^t_i-x^t_j) - \frac{\epsilon^2}{4} , &\quad  x^t_i-x^t_j \leq -\epsilon\\
\frac{ \epsilon}{2} (x^t_i-x^t_j) - \frac{\epsilon^2}{4}, & \quad x^t_j-x^t_i \leq -\epsilon,\\
0, & \quad \text{otherwise.}
\end{cases}
\]

This implies that \[D(y^{t+1})-D(y^t) \begin{cases}\geq \frac{\epsilon^2}{4}, & \qquad \text{if}\;  \Delta^t_e = 1,\\
 = 0, & \qquad \text{if}\; \Delta^t_e = 0.
\end{cases} \]

Taking expectation in the selection of $e$, we get 
\[\E{D(y^{t+1})-D(y^t) \;|\; y^t} \geq \frac{\epsilon^2}{4} \cdot \Prob(\Delta^t_e = 1 \;|\; y^t) + 0  \cdot \Prob(\Delta^t_e = 0 \;|\; y^t)=\frac{\epsilon^2}{4} \Delta^t.\]
It remains to take expectation again.

\subsection{Proof of Theorem~\ref{thm:09y09s9ffs}}

Since for all $k\geq 0$ we have $D(y^k) \leq D(y^*)$, it follows that \[D(y^*) - D(y^0) \geq \E{D(y^k)-D(y^0)} = \E{\sum_{t=0}^{k-1}D(y^{t+1}) - D(y^t)}= \sum_{t=0}^{k-1}\E{D(y^{t+1}) - D(y^t)}.\]
It remains to apply Lemma~\ref{lem:09ys09y9ss}.

\subsection{Proof of Lemma~\ref{L: noise exp iteration bound}}

Firstly we will compute increase in the dual function value  in iteration $t$:

\begin{eqnarray}
D(y^{t+1})-D(y^t)&=&\frac12 \|\overline{c}\ones-x^t \|^2-\frac12 \|\overline{c}\ones-x^{t+1} \|^2
\nonumber
\\
&=&
\frac12 \left( 
(\overline{c}-x^t_j)^2+(\overline{c}-x^t_i)^2-(\overline{c}-x^{t+1}_j)^2-(\overline{c}-x^{t+1}_i)^2
\right)
\nonumber
\\
&=&
-\overline{c}\left(
 x^{t+1}_j+x^{t+1}_i-x^{t}_j-x^{t}_i
\right)
+\frac12 \left(
\left(x^{t}_j\right)^2+\left(x^{t}_i\right)^2
-\left(x^{t+1}_j\right)^2-\left(x^{t+1}_i\right)^2
\right)
\nonumber
\\
&=&
-\overline{c}\left( w_j^{t_j}+w_i^{t_i} \right)
+\frac12 
\left(
\left(x^{t}_j\right)^2+\left(x^{t}_i\right)^2
\right)
\nonumber
\\
&& \qquad 
-\frac14 \left( 
\left(x_j^t+x_i^t  \right)^2
+ 
2\left(x_j^t+x_i^t \right)\left( w_j^{t_j}+w_i^{t_i} \right)
+\left( w_j^{t_j}+w_i^{t_i} \right)^2
\right)
\nonumber
\\
&=&
\frac14 \left(x_j^t-x_i^t\right)^2-
\left(w_i^{t_i}+w_j^{t_j}\right)
\left(    
\overline{c}+\frac12\left(x_j^t+x_i^t\right)
\right)
-\frac14\left(w_i^{t_i}+w_j^{t_j}\right)^2.
\label{Eq: noise_no_exp}
\end{eqnarray}

Now we want to estimate the expectation of this gap. Our main goal is to find $\E{D(y^{t+1}) - D(y^t)}$. There are $3$ terms in \eqref{Eq: noise_no_exp}. Since expectation is linear, we will evaluate expectations of the 3 terms separately and merge them at the end. 

Taking the expectation over the choice of edge and inserted noise in iteration $t$ we obtain
\begin{equation}
\E{\frac14\left(x_i^t - x_j^t\right)^2|x^t} = \frac{1}{4m}\sum\limits_{e\in \cE}\left(x_i^t-x_j^t\right)^2.\label{Eq: first_term}
\end{equation}

Thus we have

\begin{eqnarray*}
&&
\E{D(y^{t+1})-D(y^{t}) - \frac{1}{4}\left(x_i^t-x_j^t\right)^2 \;|\; x^t}
\\
&&
\qquad
\stackrel{\eqref{Eq: first_term}}{=}
\E{D(y^{t+1})-D(y^{t}) \;|\; x^t} - \frac{1}{4m}\sum\limits_{e\in \cE}\left(x_i^t-x_j^t\right)^2
\\
&&
\qquad
=
\E{D(y^*)-D(y^{t})\;|\; x^t} +\E{D(y^{t+1})-D(y^*)\;|\; x^t}  - \frac{1}{4m}\sum\limits_{e\in \cE}\left(x_i^t-x_j^t\right)^2
\\
&&
\qquad
\stackrel{\eqref{eq:99d8gds}}{=}
\frac12\|\overline{c}\ones-x^t \|^2
-\E{\frac12\|\overline{c}\ones-x^{t+1} \|^2 \;|\; x^t}
- \frac{1}{4m}\sum\limits_{e=(i,j)\in \cE}\left(x_i^t-x_j^t\right)^2
\\
&&
\qquad
\stackrel{\eqref{eq:8998gd98gikd}}{\leq}
\frac12\|\overline{c}\ones-x^t \|^2
-\E{\frac12\|\overline{c}\ones-x^{t+1} \|^2 \;|\; x^t}
- \frac{\ac(\cG)}{4m}\|\overline{c}\ones-x^t \|^2
\\
&&
\qquad
=
\left(1-\frac{\ac(\cG)}{2m}\right)\frac12\|\overline{c}\ones-x^t \|^2
-\E{\frac12\|\overline{c}\ones-x^{t+1} \|^2 \;|\; x^t}
\\
&&
\qquad
\stackrel{\eqref{eq:99d8gds}}{=}
\left(1-\frac{\ac(\cG)}{2m}\right)\left( D(y^*)- D(y^{t}) \right)
-\E{D(y^*)- D(y^{t+1})  \;|\; y^t}.
\end{eqnarray*}

Taking the full expectation of the above and using tower property, we get
\begin{equation}
\label{Eq: noisy gossip first full}
\E{D(y^{t+1})-D(y^{t}) - \frac{1}{4}\left(x_i^t-x_j^t\right)^2}
\leq
\left( 1-\frac{\ac(\cG)}{2m}\right)\E{D(y^*)- D(y^{t}) }
-\E{ D(y^*)- D(y^{t+1}) }.
\end{equation}

\begin{lem}\label{Lm: independence}
Suppose that we run Algorithm \algN\ for $t$ iterations and $t_i$ denotes the number of times that some edge corresponding to node $i$ was selected during the algorithm.  
\begin{enumerate}
\item  $v_i^{t_i}$ and $t_j$ are independent  for all (i.e., not necessarily distinct) $i,j$.
\item $v_i^{t_i}$ and $\phi_j^{t_j}$ are independent for all (i.e., not necessarily distinct)  $i,j$.
\item $w_i^{t_i}$ and $w_j^{t_j}$ have zero correlation for all $i\neq j$.
\item $x_j^t$ and $\phi_i^{t_i}v_i^{t_i}$ have zero correlation for all (i.e., not necessarily distinct) $i,j$.
\end{enumerate}
\end{lem}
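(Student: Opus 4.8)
The plan is to decompose the randomness of Algorithm~\algN\ once and for all, and then reduce each of the four claims to a conditioning argument. Let $\cF$ denote the $\sigma$-algebra generated by the sequence of edges selected in the first $t$ iterations. The fresh noises $\{v_\ell^k\}_{\ell\in\cV,\,k\geq 0}$ form an independent family with $v_\ell^k\sim N(0,\sigma_\ell^2)$, and this family is independent of $\cF$. Two structural facts drive everything: each counter $t_\ell$ is a deterministic function of the edge sequence, hence $\cF$-measurable; and after $t$ iterations node $\ell$ has inserted exactly the noises $v_\ell^0,\dots,v_\ell^{t_\ell-1}$, so $v_\ell^{t_\ell}$ has not yet entered the system. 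Throughout I use the convention $v_\ell^{-1}=0$.

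For part~(1) I would show the stronger statement that $v_i^{t_i}$ is independent of all of $\cF$ (which contains $\sigma(t_j)$). Conditioning on $\cF$ fixes $t_i$ at some integer value $k$; since $v_i^k$ is independent of $\cF$ and the $v_i^k$ are identically distributed, $\E{f(v_i^{t_i})\mid \cF}=\E{f(v_i^k)}$ is a constant independent of $k$ for every bounded measurable $f$, which is precisely independence of $v_i^{t_i}$ from $\cF$. Part~(2) is then immediate, because $\phi_j^{t_j}$ is a deterministic function of $t_j$ and functions of independent variables are independent.

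For part~(3) I would first note $\E{w_i^{t_i}}=0$: conditioning on $\cF$ pulls out the $\cF$-measurable factors $\phi_i^{t_i},\phi_i^{t_i-1}$, while $\E{v_i^{t_i}\mid\cF}=\E{v_i^{t_i-1}\mid\cF}=0$ by the part~(1) computation. Hence the covariance reduces to $\E{w_i^{t_i}w_j^{t_j}}$. Expanding the product gives four terms of the form $\phi_i^{a}\phi_j^{b}\,v_i^{a}v_j^{b}$ with $a\in\{t_i,t_i-1\}$ and $b\in\{t_j,t_j-1\}$; conditioning on $\cF$ makes the $\phi$-powers constants, and for $i\neq j$ the variables $v_i^{a}$ and $v_j^{b}$ are distinct members of the independent noise family, so $\E{v_i^{a}v_j^{b}\mid\cF}=0$ and every term vanishes. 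Part~(4) rests on the adaptedness observation from the setup: $x_j^t$ is measurable with respect to $\cF$ and the strictly past noises $\{v_\ell^{s}:s\le t_\ell-1\}$, and so does not involve $v_i^{t_i}$. Since $\E{\phi_i^{t_i}v_i^{t_i}}=0$, the covariance equals $\E{x_j^t\phi_i^{t_i}v_i^{t_i}}$, and conditioning on $\cF$ yields $\phi_i^{t_i}\,\E{x_j^t\mid\cF}\,\E{v_i^{t_i}\mid\cF}=0$ because $v_i^{t_i}$ is independent of $\cF$ together with the past noises and has mean zero.

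The one genuinely delicate point --- and the part I expect to require the most care --- is that $v_i^{t_i}$ is indexed by the \emph{random} counter $t_i$, so it cannot be treated as a single fixed Gaussian; the conditioning-on-$\cF$ device is what legitimizes pulling the counter out as a constant, and it must be applied consistently. In part~(4) in particular one must justify precisely that $x_j^t$ depends on the already-inserted noises $v_\ell^{\le t_\ell-1}$ but \emph{not} on the current noise $v_i^{t_i}$; this adaptedness is the real content of the claim, since the superficially similar quantity $\phi_i^{t_i-1}v_i^{t_i-1}$ \emph{is} correlated with $x_j^t$.
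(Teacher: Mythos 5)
Your proof is correct and follows essentially the same route as the paper's own (much terser) argument: the paper simply asserts parts (1)--(2) ``follow from the definition,'' expands the product for part (3), and invokes the adaptedness of $x_j^t$ for part (4), which are exactly the facts you establish. Your conditioning on the edge-selection $\sigma$-algebra $\cF$ is a rigorous formalization of what the paper leaves implicit --- in particular the treatment of the random index $t_i$ and the observation that $\phi_i^{t_i}$ itself is \emph{not} independent of $x_j^t$, so the factorization must be done conditionally --- and it correctly isolates the adaptedness claim (that $v_i^{t_i}$ has not yet entered the system) as the real content of part (4).
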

\begin{proof}
\begin{enumerate}
\item Follows from the definition of $v_i^{t}$.
\item Follows from the definition of $v_i^{t}$.
\item Note that we have $w_i^{t_i} = \phi_i^{t_i}v_i^{t_i} - \phi_i^{t_i-1}v_i^{t_i-1}$ and $w_j^{t_j} = \phi_j^{t_j}v_j^{t_j} - \phi_j^{t_j-1}v_j^{t_j-1}$. 
Clearly, $v_i^{t_i}$ and $w_j^{t_j}$ have zero correlation. Similarly $v_i^{t_i-1}$ and $w_j^{t_j}$ have zero correlation. Thus, $w_i^{t_i}$ and $w_j^{t_j}$ have zero correlation.
\item Clearly, $x_j^t$ is a function initial state and all instances of random variables up to the iteration $t$. Thus, $v_i^{t_i}$ is independent to $x_j^t$ from the definition. Thus, $x_j^t$ and $\phi_i^{t_i}v_i^{t_i}$ have zero correlation.
\end{enumerate}
\end{proof}

Now we are going to take the expectation of the second term of \eqref{Eq: noise_no_exp}. We will use the ``tower rule" of expectations  in the form
$\E{\E{\E{X\;| \; Y,Z}\;|\;Y}} = \E{X}$,
where $X,Y,Z$ are random variables. In particular, we get
\begin{eqnarray*}
\E{-\left(w_i^{t_i}+w_j^{t_j}\right)
\left(    
\overline{c}+\frac12\left(x_j^t+x_i^t\right)
\right)} = \E{\E{\E{-\left(w_i^{t_i}+w_j^{t_j}\right)
\left(    
\overline{c}+\frac12\left(x_j^t+x_i^t\right)
\right) \;| \; e^t,x^t} \;| \; x^t}}.
\end{eqnarray*}

In the equation above, $e^t$ denotes an edge selected at in the iteration $t$.
Let us first calculate the inner most expectation on the right hand side of the above identity:
\begin{eqnarray*}
&&
\E{-\left(w_i^{t_i}+w_j^{t_j}\right)
\left(    
\overline{c}+\frac12\left(x_j^t+x_i^t\right)
\right)\;|\; e^t,x^t}
\\
&&
\qquad
\stackrel{(*^1)}{=}
\E{\left(\phi_i^{t_i-1}v_i^{t_i-1}+
\phi_j^{t_j-1}v_j^{t_j-1}- \phi_i^{t_i}v_i^{t_i}-
\phi_j^{t_j}v_j^{t_j}
\right)\left(    
\overline{c}+\frac12\left(x_j^t+x_i^t\right)
\right)\;|\; e^t,x^t}
\\
&&
\qquad
\stackrel{(*^2)}{=}
\E{ \overbrace{\left(\phi_i^{t_i-1}v_i^{t_i-1}+
\phi_j^{t_j-1}v_j^{t_j-1}
\right)}^{\text{constant}}
 \overbrace{\left(    
\overline{c}+\frac12\left(x_j^t+x_i^t
\right)
\right)}^{\text{constant}}\;|\; e^t,x^t}
\\
&&
\qquad \qquad
+\E{\left(- \phi_i^{t_i}v_i^{t_i}-
\phi_j^{t_j}v_j^{t_j}
\right)
\overbrace{\left(    
\overline{c}+\frac12\left(x_j^t+x_i^t\right)
\right)}^{\text{constant}}\;|\; e^t,x^t}\\
&&
\qquad
=
\left(\phi_i^{t_i-1}v_i^{t_i-1}+
\phi_j^{t_j-1}v_j^{t_j-1}
\right)\left(    
\overline{c}+\frac12\left(x_j^t+x_i^t\right)
\right)
\\
&&
\qquad
\qquad
+\cancelto{0}{\E{\left(- \phi_i^{t_i}v_i^{t_i}-
\phi_j^{t_j}v_j^{t_j}
\right)|e,x^t}}\left(    
\overline{c}+\frac12\left(x_j^t+x_i^t\right)
\right)
\\
&&
\qquad
\stackrel{L. \ref{Lm: independence}}{=}
\left(\phi_i^{t_i-1}v_i^{t_i-1}+
\phi_j^{t_j-1}v_j^{t_j-1}
\right)\left(    
\overline{c}+\frac12\left(x_j^t+x_i^t\right)
\right)
\\
&&
\qquad
=
\left(\phi_i^{t_i-1}v_i^{t_i-1}+
\phi_j^{t_j-1}v_j^{t_j-1}
\right)   
\overline{c}
+\frac12\left(\phi_i^{t_i-1}v_i^{t_i-1}+
\phi_j^{t_j-1}v_j^{t_j-1}
\right)  \left(x_j^t+x_i^t
\right),
\end{eqnarray*}  

where $(*^1)$ means definition of $w_i^{t_i}$ and $(*^2)$ means linearity of expectation.

Now we take the expectation of last expression above with respect to choice of edge on $t$-th iteration. We obtain

\begin{eqnarray*}
&&\E{\left(\phi_i^{t_i-1}v_i^{t_i-1}+
\phi_j^{t_j-1}v_j^{t_j-1}
\right)   
\overline{c}
+\frac12\left(\phi_i^{t_i-1}v_i^{t_i-1}+
\phi_j^{t_j-1}v_j^{t_j-1}
\right)  \left(x_j^t+x_i^t
\right)|x^t} 
\\
&&
\qquad
\stackrel{(*^2)}{=}
\frac12
\E{\left(\phi_i^{t_i-1}v_i^{t_i-1}+
\phi_j^{t_j-1}v_j^{t_j-1}
\right)  \left(x_j^t+x_i^t\right) |x^t} + \E{\cancelto{0}{\left(\phi_i^{t_i-1}v_i^{t_i-1}+\phi_j^{t_j-1}v_j^{t_j-1}
\right)  \overline{c} |x^t}}
\\
&&
\qquad
\stackrel{L. \ref{Lm: independence}}{=}
\frac12 \E{\left(\phi_i^{t_i-1}v_i^{t_i-1}+
\phi_j^{t_j-1}v_j^{t_j-1}
\right)  \left(x_j^t+x_i^t\right) |x^t}
\\
&&
\qquad
 \stackrel{(*^3)}{=}
\frac{1}{2m}\sum_{e\in \cE} \left(\phi_i^{t_i-1}v_i^{t_i-1}+
\phi_j^{t_j-1}v_j^{t_j-1}
\right)  \left(x_j^t+x_i^t\right) 
\\
&&
\qquad
=
\frac{1}{2m}\sum_{e\in \cE} \left( \phi_i^{t_i-1}v_i^{t_i-1} x_i^t +\phi_j^{t_j-1}v_j^{t_j-1} x_j^t\right)
+\frac{1}{2m} \sum_{e\in \cE} \left( \phi_i^{t_i-1}v_i^{t_i-1} x_j^t+ \phi_j^{t_j-1}v_j^{t_j-1} x_i^t
\right)
\\
&&
\qquad
\stackrel{(*^4)}{=}
\frac{1}{2m}\sum_{i=1}^n d_i \phi_i^{t_i-1}v_i^{t_i-1} x_i^t
+\frac{1}{2m} \sum_{e\in \cE} \left( \phi_i^{t_i-1}v_i^{t_i-1} x_j^t+ \phi_j^{t_j-1}v_j^{t_j-1} x_i^t
\right),
\end{eqnarray*}
where $(*^3)$ means definition of expectation and $(*^4)$ means change of the summation order.

\begin{lem}\label{Lm: another_independence}
\begin{equation}
\E{\phi_i^{t_i-1}v_i^{t_i-1}x_i^t} = \frac12\E{\left(\phi_i^{t_i-1}v_i^{t_i-1}\right)^2}.
\end{equation}
\end{lem}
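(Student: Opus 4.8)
The plan is to trace how the quantity $\phi_i^{t_i-1}v_i^{t_i-1}$ sits inside $x_i^t$ and then to exploit that the Gaussian $v_i^{t_i-1}$ is zero-mean and independent of essentially everything else in sight. First I would recall the bookkeeping of Algorithm~\algN: at iteration $t$ let $t_i$ denote the value of the counter for node $i$, and let $s<t$ be the most recent iteration at which an edge incident to $i$ was selected. (If node $i$ has never been sampled then $t_i=0$, so $\phi_i^{t_i-1}v_i^{t_i-1}=\phi_i^{-1}v_i^{-1}=0$ and both sides vanish; hence assume $t_i\geq 1$.) At that iteration $s$ the counter was $t_i-1$, the fresh noise $v_i^{t_i-1}$ was drawn, and the averaging step set
\[
x_i^{s+1}=\tfrac12\left(x_i^s+w_i^{t_i-1}+x_{j'}^s+w_{j'}\right),\qquad w_i^{t_i-1}=\phi_i^{t_i-1}v_i^{t_i-1}-\phi_i^{t_i-2}v_i^{t_i-2},
\]
where $(i,j')$ is the edge chosen at iteration $s$. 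Since no edge incident to $i$ is selected strictly between $s+1$ and $t$, the value of node $i$ is frozen, so $x_i^t=x_i^{s+1}$.

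Consequently I would write the decomposition
\[
x_i^t=\tfrac12\,\phi_i^{t_i-1}v_i^{t_i-1}+r,
\]
where $r$ collects $\tfrac12(x_i^s+x_{j'}^s)$, $-\tfrac12\phi_i^{t_i-2}v_i^{t_i-2}$ and $\tfrac12 w_{j'}$. The key structural observation is that none of the terms in $r$, nor the scalar coefficient $\phi_i^{t_i-1}$, involves the particular Gaussian $v_i^{t_i-1}$: the values $x_i^s,x_{j'}^s$ are determined before $v_i^{t_i-1}$ is drawn, the term $\phi_i^{t_i-2}v_i^{t_i-2}$ is an earlier node-$i$ noise, and $w_{j'}$ is built from node $j'$'s noise, all independent of node $i$'s noise.

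Then I would multiply by $\phi_i^{t_i-1}v_i^{t_i-1}$ and take expectations, obtaining
\[
\E{\phi_i^{t_i-1}v_i^{t_i-1}x_i^t}=\tfrac12\E{\left(\phi_i^{t_i-1}v_i^{t_i-1}\right)^2}+\E{\phi_i^{t_i-1}v_i^{t_i-1}\,r},
\]
so the task reduces to killing the cross term. To do so I would condition on the $\sigma$-algebra $\cH$ generated by all edge selections together with all noise variables other than $v_i^{t_i-1}$. Given $\cH$, the counter $t_i$, the coefficient $\phi_i^{t_i-1}$ and the remainder $r$ are all determined, while $v_i^{t_i-1}$ is still $N(0,\sigma_i^2)$ with mean zero and independent of $\cH$ (this is exactly the independence recorded in Lemma~\ref{Lm: independence}). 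Hence $\E{\phi_i^{t_i-1}v_i^{t_i-1}r\mid\cH}=\phi_i^{t_i-1}r\,\E{v_i^{t_i-1}\mid\cH}=0$, and the tower rule yields $\E{\phi_i^{t_i-1}v_i^{t_i-1}r}=0$, which is the claim.

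The main obstacle is not any computation but the careful probabilistic bookkeeping: one must ensure that the random index $t_i-1$ does not secretly correlate the coefficient $\phi_i^{t_i-1}$ (and the remainder $r$) with the value $v_i^{t_i-1}$. The clean way around this is the conditioning on $\cH$ above, which uses precisely that edge selection is independent of the injected noise and that each fresh draw is independent of all counters and of all other randomness; this is what makes the \emph{freeze-and-condition} argument rigorous and also disposes of the degenerate case $t_i=0$.
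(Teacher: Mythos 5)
Your proof is correct and follows essentially the same route as the paper's: both identify the last iteration at which node $i$ was sampled, use the fact that $x_i^t$ has not changed since then to write $x_i^t = \tfrac12\phi_i^{t_i-1}v_i^{t_i-1} + r$ with the remainder $r$ free of $v_i^{t_i-1}$, and then kill the cross term using the zero mean and independence of the fresh noise (the paper's steps $(*^5)$, $(*^6)$ perform exactly this decomposition, including the same treatment of the trivial case $t_i=0$). The only difference is one of presentation: the paper cancels the noise term explicitly inside the expectation and factorizes the cross term via Lemma~\ref{Lm: independence}, whereas you justify its vanishing by conditioning on the $\sigma$-algebra generated by the edge selections and all other noises and applying the tower rule --- a slightly more careful handling of the random index $t_i-1$, but the same underlying argument.
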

\begin{proof}
\begin{eqnarray*}
&&
\E{\phi_i^{t_i-1}v_i^{t_i-1}x_i^t}
\\
&&
\qquad
 \stackrel{(*^5)}{=}
 \E{\phi_i^{t_i-1}v_i^{t_i-1}\left(\left(x_i^t - \frac{\phi_i^{t_i-1}v_i^{t_i-1}}{2}\right)+\frac{\phi_i^{t_i-1}v_i^{t_i-1}}{2}\right)} 
 \\
&&
\qquad
\stackrel{(*^2)}{=}
\E{\phi_i^{t_i-1}v_i^{t_i-1}\left(x_i^t - \frac{\phi_i^{t_i-1}v_i^{t_i-1}}{2}\right)}+\frac12\E{\left(\phi_i^{t_i-1}v_i^{t_i-1}\right)^2} 
\\
&&
\qquad
 \stackrel{(*^6)}{=}
 \E{\phi_i^{t_i-1}v_i^{t_i-1}\left(\frac{x_i^{t_i-1} + x_l^{t_l^0} +w^{t_i-1}+w^{t_l^0} -\phi_i^{t_i-1}v_i^{t_i-1}}{2}\right)}
 +
 \frac12\E{\left(\phi_i^{t_i-1}v_i^{t_i-1}\right)^2}
 \\
&&
\qquad
\stackrel{(*^1)}{=}
\E{\phi_i^{t_i-1}v_i^{t_i-1}\left(\frac{x_i^{t_i-1} + x_l^{t_l^0} +\phi_i^{t_i-1}v_i^{t_i-1}- \phi_i^{t_i-2}v_i^{t_i-2}+\phi_i^{t_l^0}v_l^{t_l^0} - \phi_i^{t_l^0-1}v_l^{t_l^0-1} -\phi_i^{t_i-1}v_i^{t_i-1}}{2}\right)}
\\
&&
\qquad \qquad
 \qquad +
\frac12\E{\left(\phi_i^{t_i-1}v_i^{t_i-1}\right)^2}
\\
&&
\qquad
=
\E{\phi_i^{t_i-1}v_i^{t_i-1}\left(\frac{x_i^{t_i-1} + x_l^{t_l^0} +\phi_i^{t_l^0}v_l^{t_l^0}- \phi_i^{t_l^0-1}v_l^{t_l^0-1}-\phi_i^{t_i-2}v_i^{t_i-2}}{2}\right)}+\frac12\E{\left(\phi_i^{t_i-1}v_i^{t_i-1}\right)^2}
\\
&&
\qquad
\stackrel{L.\ref{Lm: independence}}{=}\cancelto{0}{\E{\phi_i^{t_i-1}v_i^{t_i-1}}}\E{\left(\frac{x_i^{t_i-1} + x_l^{t_l^0} +\phi_i^{t_l^0}v_l^{t_l^0}-\phi_i^{t_l^0-1}v_l^{t_l^0-1}-\phi_i^{t_i-2}v_i^{t_i-2}}{2}\right)}
\\
&&
\qquad \qquad
 \qquad +
\frac12\E{\left(\phi_i^{t_i-1}v_i^{t_i-1}\right)^2} 
\\
&&
\qquad
=
\frac12\E{\left(\phi_i^{t_i-1}v_i^{t_i-1}\right)^2},
\end{eqnarray*}

where in step $(*^5)$ we add and subtracting $\frac{\phi_i^{t_i-1}v_i^{t_i-1}}{2}$. In the step $(*^6)$ we denote by $l$ a node such that that the noise $\phi_i^{t_i-1}v_i^{t_i-1}$ was added to the system when the edge $(i,l)$ was chosen (we do not consider $t_i=0$ since in this case the Lemma \ref{Lm: another_independence} trivially holds).

\end{proof}

Taking the expectation with respect to the algorithm we obtain

\begin{eqnarray}
&&
\E{-\left(w_i^{t_i}+w_j^{t_j}\right)
\left(   
\overline{c}+\frac12\left(x_j^t+x_i^t\right)
\right)} 
\nonumber
\\
&& 
\qquad 
\stackrel{(*^7)}{=}\E{\E{\E{-\left(w_i^{t_i}+w_j^{t_j}\right)\left(   
\overline{c}+\frac12\left(x_j^t+x_i^t\right)
\right) |x^t,e}|x^t}}
\nonumber
\\
&& 
\qquad 
=
\E{
 \frac{1}{2m}\sum_{i=1}^n d_i \phi_i^{t_i-1}v_i^{t_i-1} x_i^t
+ \frac{1}{2m}\sum_{e\in \cE} \left( \phi_i^{t_i-1}v_i^{t_i-1} x_j^t+ \phi_j^{t_j-1}v_j^{t_j-1} x_i^t
\right)
}
\nonumber
\\
&& 
\qquad
\stackrel{(*^2)}{=}
 \frac{1}{2m}\sum_{i=1}^n d_i \E{\phi_i^{t_i-1}v_i^{t_i-1} x_i^t}
+ \frac{1}{2m}\sum_{e\in \cE} \E{\left( \phi_i^{t_i-1}v_i^{t_i-1} x_j^t+ \phi_j^{t_j-1}v_j^{t_j-1} x_i^t
\right)
}
\nonumber
\\
&& 
\qquad
\stackrel{L.\ref{Lm: another_independence}}{=}
 \frac{1}{4m}\sum_{i=1}^n d_i \E{\left(\phi_i^{t_i-1}v_i^{t_i-1}\right)^2 }
+ \frac{1}{2m}\sum_{e\in \cE} \E{\left( \phi_i^{t_i-1}v_i^{t_i-1} x_j^t+ \phi_j^{t_j-1}v_j^{t_j-1} x_i^t
\right)
},
\label{Eq: noisy gossip second full}
\end{eqnarray}

where $(*^7)$ means tower rule.

\begin{lem}\label{Lm: squared_expetation}
\begin{equation}
\E{
\left(\phi_i^{t_i}v_i^{t_i}+
\phi_j^{t_j}v_j^{t_j}\right)^2
|x^t,e^t
} = \sigma^2_i\phi_i^{2t_i}+\sigma^2_j\phi_j^{2t_j}.
\label{Eq: squared expectation}
\end{equation}
\end{lem}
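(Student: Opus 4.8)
The plan is to expand the square and evaluate the three resulting conditional expectations separately, exploiting the independence structure already recorded in Lemma~\ref{Lm: independence}. Writing out the square,
\[
\left(\phi_i^{t_i}v_i^{t_i}+\phi_j^{t_j}v_j^{t_j}\right)^2 = \phi_i^{2t_i}\left(v_i^{t_i}\right)^2 + 2\phi_i^{t_i}\phi_j^{t_j}v_i^{t_i}v_j^{t_j} + \phi_j^{2t_j}\left(v_j^{t_j}\right)^2,
\]
so by linearity of expectation it suffices to compute the conditional expectation of each of the three summands given $x^t$ and $e^t=(i,j)$.

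The key preliminary observation is that, conditioned on $x^t$ and on the selected edge $e^t=(i,j)$, the counters $t_i$ and $t_j$ are fully determined: they count how many times nodes $i$ and $j$ were selected prior to iteration $t$, which is a function of the edge-selection history underlying $x^t$. Hence $\phi_i^{t_i}$ and $\phi_j^{t_j}$ act as constants under this conditioning. The samples $v_i^{t_i}$ and $v_j^{t_j}$, by contrast, are the \emph{fresh} noise drawn in step~2 of Algorithm~\algN\ at the current iteration; by construction each is a centered Gaussian with variance $\sigma_i^2$ (resp.\ $\sigma_j^2$) and is independent of all other randomness in the algorithm, in particular of $x^t$, of $e^t$, and (by Lemma~\ref{Lm: independence}, parts~1 and~2) of the counters $t_i,t_j$. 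Since $i\neq j$ for any edge $e=(i,j)$, the two draws $v_i^{t_i}$ and $v_j^{t_j}$ are moreover mutually independent.

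With these facts in hand the calculation is immediate. For the two pure-square terms, $\E{(v_i^{t_i})^2\mid x^t,e^t}=\sigma_i^2$ and $\E{(v_j^{t_j})^2\mid x^t,e^t}=\sigma_j^2$, because each draw has mean zero and variance equal to its prescribed $\sigma^2$ and is independent of the conditioning; multiplying by the constants $\phi_i^{2t_i}$ and $\phi_j^{2t_j}$ yields $\sigma_i^2\phi_i^{2t_i}$ and $\sigma_j^2\phi_j^{2t_j}$. For the cross term, mutual independence together with zero means gives $\E{v_i^{t_i}v_j^{t_j}\mid x^t,e^t}=\E{v_i^{t_i}}\,\E{v_j^{t_j}}=0$. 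Summing the three contributions produces exactly $\sigma_i^2\phi_i^{2t_i}+\sigma_j^2\phi_j^{2t_j}$, which is \eqref{Eq: squared expectation}.

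The only point demanding care — and the single substantive step — is the justification that the fresh draws $v_i^{t_i},v_j^{t_j}$ retain their centered Gaussian distributions with the stated variances and remain mutually independent \emph{after} conditioning on $x^t$ and $e^t$. This is precisely the content borrowed from Lemma~\ref{Lm: independence}: the noise generated at the current step is independent of the history, so conditioning on $x^t$ and the counters neither shifts its mean nor alters its variance, and distinctness of the endpoints $i\neq j$ decouples the two draws. Everything else reduces to the elementary second-moment computation above.
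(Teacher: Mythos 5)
Your proof is correct and is essentially the same argument as the paper's: the paper packages the computation as $\E{X^2}=\Var(X)+\E{X}^2$ with zero conditional mean plus additivity of variance for the independent draws $v_i^{t_i},v_j^{t_j}$, which is exactly your direct expansion of the square with the cross term vanishing by independence and zero means. Your explicit justification that $\phi_i^{t_i},\phi_j^{t_j}$ are constants under the conditioning and that the fresh draws remain centered and independent is the same (implicit) substance in the paper's proof, just spelled out more carefully.
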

\begin{proof}
Since we have $\E{\left(\phi_i^{t_i}v_i^{t_i}+
\phi_j^{t_j}v_j^{t_j}\right)|x^t,e^t} = 0$, and also for any random variable $X$: $\E{X^2} = \Var{\left(X\right)}+\E{X}^2$, we only need to compute the variance:
\begin{eqnarray*}
\Var\left(\phi_i^{t_i}v_i^{t_i}+
\phi_j^{t_j}v_j^{t_j}\right)
 = 
 \Var{\left(\phi_i^{t_i}v_i^{t_i}\right)}+\Var{\left(
\phi_j^{t_j}v_j^{t_j}\right)} 
=
\left(\phi_i^{t_i}\right)^2\Var{\left(v_i^{t_i}\right)}+\left(\phi_j^{t_j}\right)^2\Var{\left(
v_j^{t_j}\right)} .
\end{eqnarray*}
\end{proof}

Taking an expectation of the third term of \eqref{Eq: noise_no_exp} with respect to lastly added noise, the expression $\E{\left(w_i^{t_i}+w_j^{t_j}\right)^2 |x^t,e^t}$ is equal to

\begin{eqnarray*}
& &
\stackrel{(*^1)}{=}
\E{\left(
\phi_i^{t_i}v_i^{t_i}+
\phi_j^{t_j}v_j^{t_j}
-
\phi_i^{t_i-1}v_i^{t_i-1}-
\phi_j^{t_j-1}v_j^{t_j-1}
\right)^2 |x^t,e^t}
\\
&&  
\stackrel{(*^2)}{=}
\E{
\left(\phi_i^{t_i}v_i^{t_i}+
\phi_j^{t_j}v_j^{t_j}\right)^2
|x^t,e^t
}
-
2
\E{\overbrace{
\left(\phi_i^{t_i}v_i^{t_i}+\phi_j^{t_j}v_j^{t_j}\right)
}^{\E{\phi_i^{t_i}v_i^{t_i}+\phi_j^{t_j}v_j^{t_j}}=0}
\overbrace{
\left(\phi_i^{t_i-1}v_i^{t_i-1}+\phi_j^{t_j-1}v_j^{t_j-1}\right)
}^{\text{constant}}
|x^t,e^t
}
\\
& & \qquad+
 \overbrace{\E{
\left(	
\phi_i^{t_i-1}v_i^{t_i-1}+
\phi_j^{t_j-1}v_j^{t_j-1}
\right)^2
|x^t,e^t}}^{\text{constant}}
\\
& & \qquad
\stackrel{\eqref{Eq: squared expectation}}{=}
\sigma_i^2\phi_i^{2t_i}+\sigma^2_j\phi_j^{2t_j}+\left(
\phi_i^{t_i-1}v_i^{t_i-1}+
\phi_j^{t_j-1}v_j^{t_j-1}
\right)^2.
\end{eqnarray*}
Taking the expectation over $e^t$ we obtain:

\begin{eqnarray*}
\E{\left(w_i^{t_i}+w_j^{t_j}\right)^2 |x^t}&\stackrel{(*^7)}{=}&\E{\E{\left(w_i^{t_i}+w_j^{t_j}\right)^2 |e^t, x^t}|x^t}
\\
&=&
\E{
\sigma_i^2\phi_i^{2t_i}+\sigma^2_j\phi_j^{2t_j}
+\left(
\phi_i^{t_i-1}v_i^{t_i-1}+
\phi_j^{t_j-1}v_j^{t_j-1}
\right)^2
|x^t}
\\
&\stackrel{(*^3)}{=}&
\frac{1}{m}\sum_{e \in \cE} \left( \sigma_i^2\phi_i^{2t_i}+\sigma^2_j\phi_j^{2t_j}\right)+
\frac1m \sum_{e\in \cE} \left(
\phi_i^{t_i-1}v_i^{t_i-1}+
\phi_j^{t_j-1}v_j^{t_j-1}
\right)^2
\\
&\stackrel{(*^4)}{=}&
\frac{1}{m}\sum_{i=1}^n d_i\sigma^2_i \phi_i^{2t_i}+
\frac1m \sum_{e\in \cE} \left(
\phi_i^{t_i-1}v_i^{t_i-1}+
\phi_j^{t_j-1}v_j^{t_j-1}
\right)^2.
\end{eqnarray*}
 
Finally, taking the expectation with respect to the algorithm we get

\begin{eqnarray}
&&\E{\left(w_i^{t_i}+w_j^{t_j}\right)^2 }
\nonumber
\\
&&
\qquad
\stackrel{(*^7)}{=}
\E{\E{\left(w_i^{t_i}+w_j^{t_j}\right)^2|x^t }}
\nonumber
\\
&&
\qquad
\stackrel{(*^2)}{=}
\frac{1}{m}\sum_{i=1}^n d_i\sigma^2_i \E{\phi_i^{2t_i}}+
\frac1m \sum_{e\in \cE}\E{ \left(
\phi_i^{t_i-1}v_i^{t_i-1}+
\phi_j^{t_j-1}v_j^{t_j-1}
\right)^2}
\nonumber
\\
&&
\qquad
=
\frac{1}{m}\sum_{i=1}^n d_i\sigma^2_i \E{\phi_i^{2t_i}}+
\frac1m \sum_{e\in \cE}\E{ 
\left(\phi_i^{t_i-1}v_i^{t_i-1}\right)^2+
\left(\phi_j^{t_j-1}v_j^{t_j-1}\right)^2
+ 2\phi_i^{t_i-1}v_i^{t_i-1}\phi_j^{t_j-1}v_j^{t_j-1}
}
\nonumber
\\
&&
\qquad
\stackrel{(*^4)}{=}
\frac{1}{m}\sum_{i=1}^n d_i\sigma^2_i \E{\phi_i^{2t_i}}+
\frac1m \sum_{i=1}^n d_i\E{ 
\left(\phi_i^{t_i-1}v_i^{t_i-1}\right)^2}
+ \frac2m \sum_{e\in \cE}\E{\phi_i^{t_i-1}v_i^{t_i-1}\phi_j^{t_j-1}v_j^{t_j-1}
}
\nonumber
\\
&&
\qquad
\stackrel{L.\ref{Lm: independence}}{=}
\frac{1}{m}\sum_{i=1}^n d_i\sigma^2_i \E{\phi_i^{2t_i}}+
\frac1m \sum_{i=1}^n d_i\E{ 
\left(\phi_i^{t_i-1}v_i^{t_i-1}\right)^2} + \frac2m \sum_{e\in \cE}\cancelto{0}{\E{\phi_i^{t_i-1}v_i^{t_i-1}}}\cancelto{0}{\E{\phi_j^{t_j-1}v_j^{t_j-1}}}
\nonumber
\\
&&
\qquad
=
\frac{1}{m}\sum_{i=1}^n d_i\sigma^2_i \E{\phi_i^{2t_i}}+
\frac1m \sum_{i=1}^n d_i\E{ 
\left(\phi_i^{t_i-1}v_i^{t_i-1}\right)^2
\label{Eq: noisy gossip third full}
}.
\end{eqnarray}

Combining \eqref{Eq: noise_no_exp} with \eqref{Eq: noisy gossip first full}, \eqref{Eq: noisy gossip second full} and \eqref{Eq: noisy gossip third full} we obtain

\begin{eqnarray*}
\E{ D(y^*)- D(y^{t+1}) } 
&\leq & 
\left( 1-\frac{\ac(\cG)}{2m}\right)\E{D(y^*)- D(y^{t})} 
-\frac{1}{4m}\sum_{i=1}^n d_i \E{\left(\phi_i^{t_i-1}v_i^{t_i-1}\right)^2 }
\\& &\qquad- \frac{1}{2m}\sum_{e\in \cE} \E{\left( \phi_i^{t_i-1}v_i^{t_i-1} x_j^t+ \phi_j^{t_j-1}v_j^{t_j-1} x_i^t
\right)}
\\
& & \qquad+\frac{1}{4m}\sum_{i=1}^n d_i \sigma^2_i \E{\phi_i^{2t_i}}+
\frac{1}{4m} \sum_{i=1}^n d_i\E{\left(\phi_i^{t_i-1}v_i^{t_i-1}\right)^2}
\\
&=&
\left( 1-\frac{\ac(\cG)}{2m}\right)\E{D(y^*)- D(y^{t})} +\frac{1}{4m}\sum_{i=1}^n d_i\sigma^2_i \E{\phi_i^{2t_i}}
\\& &\qquad-
\frac{1}{2m}\sum_{e\in \cE} \E{\left( \phi_i^{t_i-1}v_i^{t_i-1} x_j^t+ \phi_j^{t_j-1}v_j^{t_j-1} x_i^t
\right)},
\end{eqnarray*}
which concludes the proof.
\subsection{Proof of Theorem \ref{T: ng general convergence}}

\begin{lem}
After $t$ iterations of algorithm \algN\ we have
\begin{equation} \label{Eq: binomial}
\E{\phi_i^{2t_i}}=\left(1-\frac{d_i}{m} \left(1-\phi_i^2\right)\right)^{t}.
\end{equation}
\label{L: noisy binomial}
\end{lem}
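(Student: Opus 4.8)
The plan is to observe that the random counter $t_i$ is a sum of independent Bernoulli trials, and hence follows a binomial distribution whose probability generating function yields the claim immediately. At each iteration Algorithm~\algN\ selects an edge $e\in\cE$ uniformly at random and independently of the entire past history. Node $i$ is incident to exactly $d_i$ of the $m$ edges, so the probability that the selected edge is incident to $i$ --- which is precisely the event that $t_i$ increments in that iteration --- equals $d_i/m$, independent of all previous selections. Consequently, after $t$ iterations, $t_i$ is distributed as $\mathrm{Binomial}(t,\,d_i/m)$.

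Given this, I would finish by computing $\E{\phi_i^{2t_i}}=\E{(\phi_i^2)^{t_i}}$. The cleanest route is a short induction on the iteration count. Writing $t_i^{(t)}$ for the value of the counter after $t$ iterations and conditioning on whether the $(t+1)$-st sampled edge is incident to $i$, one obtains the recursion
\begin{equation*}
\E{\phi_i^{2t_i^{(t+1)}}}
=\frac{d_i}{m}\,\phi_i^2\,\E{\phi_i^{2t_i^{(t)}}}
+\left(1-\frac{d_i}{m}\right)\E{\phi_i^{2t_i^{(t)}}}
=\left(1-\frac{d_i}{m}\bigl(1-\phi_i^2\bigr)\right)\E{\phi_i^{2t_i^{(t)}}}.
\end{equation*}
Since $t_i^{(0)}=0$ gives base value $\E{\phi_i^{0}}=1$, unrolling the recursion produces exactly $\bigl(1-\tfrac{d_i}{m}(1-\phi_i^2)\bigr)^{t}$, which is \eqref{Eq: binomial}. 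Equivalently, one may simply quote that the probability generating function of a $\mathrm{Binomial}(t,p)$ variable is $s\mapsto(1-p+ps)^t$ and evaluate it at $s=\phi_i^2$ and $p=d_i/m$.

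I do not anticipate a genuine obstacle here; the only point requiring care is the justification that the increment events are independent across iterations, each occurring with the \emph{constant} probability $d_i/m$. This is exactly where the uniform, history-independent edge sampling of Algorithm~\algN\ enters: the count $t_i$ does not feed back into the noise variables $v_i^{t_i}$ nor into the primal updates, so its law depends only on the sequence of sampled edges, which are independent and uniform on $\cE$. With that independence established, the binomial identification and the one-line induction complete the proof.
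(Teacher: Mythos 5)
Your proposal is correct and follows essentially the same route as the paper: identify $t_i$ as a $\mathrm{Binomial}(t, d_i/m)$ random variable and evaluate $\E{(\phi_i^2)^{t_i}}$, which the paper does by writing out the binomial sum and applying the binomial theorem --- precisely the probability-generating-function evaluation at $s=\phi_i^2$ that you invoke (your one-step recursion is just an unrolled version of the same computation). No gap; the argument is sound.
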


\begin{proof}

\begin{eqnarray*}
\E{\phi_i^{2t_i}}&=&\sum_{j=0}^t \Prob(t_i=j)\phi_i^{2j}
\quad = \quad
\sum_{j=0}^t\binom{t}{j}\left(\frac{m-d_i}{m} \right)^{t-j}\left(\frac{d_i}{m}\phi_i^2\right)^j
\\
&=&
\left(\frac{m-d_i}{m} +\frac{d_i}{m}\phi_i^2\right)^{t}
\quad = \quad
\left(1-\frac{d_i}{m} \left(1-\phi_i^2\right)\right)^{t}.
\end{eqnarray*}

\end{proof}

\begin{lem}
Random variables $\phi_i^{t_i-1}v_i^{t_i-1} $ and $x_j^t$ are nonegatively correlated, i.e. 
\begin{equation}\label{Eq: positive corelation}
\E{\phi_i^{t_i-1}v_i^{t_i-1}x_j^t}\geq 0.
\end{equation}
\label{L: noisy positive correlation}
\end{lem}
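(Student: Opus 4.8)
The plan is to exploit the linearity of Algorithm~\algN\ in the injected noise together with the nonnegativity of the gossip averaging operators. The first observation is that the edge selection in Algorithm~\algN\ is performed uniformly at random, independently of all the Gaussian variables $v_\ell^s$; hence conditioning on the entire sequence of selected edges fixes every counter $t_\ell$ and turns each averaging step into a \emph{deterministic} linear map. Writing $G_\tau$ for the averaging matrix applied at iteration $\tau$ (the identity except that the two selected coordinates are each replaced by their average), and $\eta^\tau$ for the vector holding the freshly inserted $w$-values at the two endpoints selected at iteration $\tau$ (and zeros elsewhere), the primal iterate unrolls as
\[
x^t = G_{t-1}\cdots G_0\, x^0 + \sum_{\tau=0}^{t-1} G_{t-1}\cdots G_\tau\, \eta^\tau .
\]

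Next I would isolate the single variable $v_i^{t_i-1}$. Up to the current iteration $t$, node $i$ has been selected exactly $t_i$ times, and this fresh draw enters the system only \emph{once}: through $w_i^{t_i-1}=\phi_i^{t_i-1}v_i^{t_i-1}-\phi_i^{t_i-2}v_i^{t_i-2}$, injected into coordinate $i$ at the iteration $\tau^\star$ of that last selection, with coefficient $+\phi_i^{t_i-1}$. The cancelling copy $-\phi_i^{t_i-1}v_i^{t_i-1}$ appearing in $w_i^{t_i}$ is not added until the \emph{next} selection of $i$, which has not yet occurred by time $t$. Consequently I can write
\[
x_j^t = A + P_{ji}\,\phi_i^{t_i-1} v_i^{t_i-1},
\qquad
P_{ji}\eqdef \bigl[G_{t-1}\cdots G_{\tau^\star}\bigr]_{ji},
\]
where $A$ collects all terms not involving $v_i^{t_i-1}$. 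A product of averaging matrices is doubly stochastic with nonnegative entries, so $P_{ji}\geq 0$; moreover $P_{ji}$, $A$ and $\phi_i^{t_i-1}$ are functions of the edge selections and of noise variables other than $v_i^{t_i-1}$, hence independent of $v_i^{t_i-1}$.

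Then the correlation is computed directly:
\[
\E{\phi_i^{t_i-1}v_i^{t_i-1}x_j^t}
= \E{\phi_i^{t_i-1}v_i^{t_i-1}A}
+ \E{P_{ji}\,(\phi_i^{t_i-1})^2\,(v_i^{t_i-1})^2}.
\]
The first term vanishes: conditioning on $t_i$ and on all randomness except $v_i^{t_i-1}$, the factor $v_i^{t_i-1}$ is a zero-mean Gaussian independent of everything else (exactly the reasoning already used in Lemma~\ref{Lm: independence} and Lemma~\ref{Lm: another_independence}), so its expectation against the independent quantity $\phi_i^{t_i-1}A$ is $0$. In the second term every factor is nonnegative, $P_{ji}\geq 0$, $(\phi_i^{t_i-1})^2\geq 0$ and $(v_i^{t_i-1})^2\geq 0$, so its expectation is nonnegative, which establishes \eqref{Eq: positive corelation}. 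The boundary case $t_i=0$ is trivial since $v_i^{-1}=0$.

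The main obstacle is the bookkeeping in the middle step: one must verify that $v_i^{t_i-1}$ enters $x_j^t$ exactly once and with a nonnegative coefficient, i.e.\ that its cancelling copy has not yet been subtracted at time $t$, and that propagation through the doubly stochastic averaging steps cannot flip its sign. Once the decomposition $x_j^t = A + P_{ji}\phi_i^{t_i-1}v_i^{t_i-1}$ with $P_{ji}\geq 0$ is secured, the conclusion is immediate.
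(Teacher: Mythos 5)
Your proof is correct, and while it rests on the same underlying intuition as the paper's, its formalization is genuinely different and in fact more careful. The paper argues by conditioning on a single binary event: it introduces the indicator $R_{i,j}$ of whether the last noise insertion by node $i$ happened on the edge $(i,j)$, splits the expectation into the two cases, and asserts that the conditional expectation is nonnegative when $R_{i,j}=1$ (the noise was ``added to $x_j$ with the positive sign'') and exactly zero when $R_{i,j}=0$. Your unrolled decomposition $x_j^t = A + P_{ji}\,\phi_i^{t_i-1}v_i^{t_i-1}$ with $P_{ji}=[G_{t-1}\cdots G_{\tau^\star}]_{ji}\geq 0$ makes rigorous, uniformly and without case analysis, precisely what the paper leaves informal: the fresh noise enters $x_j^t$ only through nonnegative averaging coefficients, its cancelling copy not yet having been inserted, and the cross term dies by independence and zero mean. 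Your argument even repairs a small imprecision in the paper: when $R_{i,j}=0$, the noise inserted on an edge $(i,l)$ with $l\neq j$ can still propagate to node $j$ through later averaging steps before node $i$ is selected again, so the paper's second conditional expectation is in general strictly positive rather than zero; it is only nonnegative --- which is exactly what your coefficient tracking establishes and all that the lemma needs. In short, the paper buys brevity at the cost of an inaccurate (though harmless) intermediate claim, while your route pays the bookkeeping cost you identify and gets a fully airtight proof; the treatment of the boundary case $t_i=0$ via $v_i^{-1}=0$ is also correctly handled on your side.
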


\begin{proof}
Denote $R_{i,j}$ to be a random variable equal to 1 if the noise $w_i^{t_i}$ was added to the system when edge $(i,j)$ was chosen and equal to 0 otherwise. We can rewrite the expectation in the following way:

\begin{eqnarray*}
\E{\phi_i^{t_i-1}v_i^{t_i-1}x_j^t}&=&
\overbrace{\E{\phi_i^{t_i-1}v_i^{t_i-1}x_j^t \;|\; R_{i,j}=1}}^{\geq 0}\Prob(R_{i,j}=1)\\
\quad && +\overbrace{\E{\phi_i^{t_i-1}v_i^{t_i-1}x_j^t \;|\; R_{i,j}=0}}^{0}\Prob(R_{i,j}=0) \quad \geq \quad 0.
\end{eqnarray*}

The inequality  $
\E{\phi_i^{t_i-1}v_i^{t_i-1}x_j^t \;|\; R_{i,j}=1}\geq 0
$
holds due to the fact that $\phi_i^{t_i-1}v_i^{t_i-1}$ was added to $x_j$ with the positive sign.
\end{proof}

Combining \eqref{Eq: noise gossip iteration bound final} with Lemmas \ref{L: noisy binomial} and \ref{L: noisy positive correlation} we obtain

\begin{eqnarray*}
\E{ D(y^*)- D(y^{t+1}) } 
&\leq&
\left( 1-\frac{\ac(\cG)}{2m}\right)\E{D(y^*)- D(y^{t})} +\frac{1}{4m}\sum_{i=1}^n d_i \sigma^2_i \E{\phi_i^{2t_i}}
\\
& &\qquad-
\frac{1}{2m}\sum_{e\in \cE} \E{\left( \phi_i^{t_i-1}v_i^{t_i-1} x_j^t+ \phi_i^{t_j-1}v_j^{t_j-1} x_i^t
\right)}
\\
&\stackrel{\eqref{Eq: positive corelation}}{\leq} &
\left( 1-\frac{\ac(\cG)}{2m}\right)\E{D(y^*)- D(y^{t})} +\frac{1}{4m}\sum_{i=1}^n d_i\sigma^2_i  \E{\phi_i^{2t_i}}
\\
&\stackrel{\eqref{Eq: binomial}}{=} &
\left( 1-\frac{\ac(\cG)}{2m}\right)\E{D(y^*)- D(y^{t})} +\frac{1}{4m}\sum_{i=1}^n d_i\sigma^2_i  \left(1-\frac{d_i}{m}\left(1-\phi_i^2\right) \right)^{t}
\\
&=&
\left( 1-\frac{\ac(\cG)}{2m}\right)\E{D(y^*)- D(y^{t})} +\frac{\sum\left(d_i\sigma_i^2\right)}{4m}\psi^t.
\end{eqnarray*}

The recursion above gives us inductively the following

\begin{equation*}
\E{ D(y^*)- D(y^{k}) } \leq  \rho^k \left( D(y^*)- D(y^{0}) \right)  + \frac{\sum\left(d_i\sigma_i^2\right)}{4m}\sum_{t=1}^k \rho^{k-t}\psi^{t},
\end{equation*}
which concludes the proof of the theorem.

\subsection{Proof of Corollary \ref{C: noisy gossip special}}

Note that we have
\begin{eqnarray*}
\psi^t
&=&\frac{1}{\sum_{i=1}^n d_i\sigma_i^2 }
\sum_{i=1}^n d_i \sigma^2_i \left(1-\frac{d_i}{m}\left(1-\left(1-\frac{\gamma}{d_i}\right)\right) \right)^{t}\\
&=&
\frac{1}{\sum_{i=1}^n d_i\sigma_i^2 }
\sum_{i=1}^n d_i \sigma^2_i\left(1-\frac{\gamma}{m}\right)^{t}
\quad = \quad \left(1-\frac{\gamma}{m}\right)^{t}.
\end{eqnarray*}

In view of Theorem~\ref{T: ng general convergence}, this gives us the following:
\begin{eqnarray*}
\E{ D(y^*)- D(y^{k}) } 
&\leq &
 \left( D(y^*)- D(y^{0}) \right) \rho^k + \frac{\sum\left(d_i\sigma_i^2\right)}{4m}\sum_{t=1}^k \rho^{k-t}\psi^{t}
\\
&\leq & 
\left( D(y^*)- D(y^{0}) \right) \rho^k + \frac{\sum\left(d_i\sigma_i^2\right)}{4m}\sum_{t=1}^k \rho^{k-t}\left(1-\frac{\gamma}{m}\right)^{t}
\\
&\leq &
\left( D(y^*)- D(y^{0}) \right) \rho^k + \frac{\sum\left(d_i\sigma_i^2\right)}{4m}k \max\left(\rho,1-\frac{\gamma}{m}\right)^k
\\
&\leq  &
\left( D(y^*)- D(y^{0})+\frac{\sum\left(d_i\sigma_i^2\right)}{4m}k \right) \max\left(\rho,1-\frac{\gamma}{m}\right)^k.
\end{eqnarray*}

\section{Notation Glossary}

The following notational conventions are used throughout the paper. Boldface upper case letters will denote matrices and boldface lower case letters will denote vectors.

\begin{table}[!h]
\begin{center}
\begin{tabular}{|c|l|c|}
 \hline
 \multicolumn{2}{|c|}{{\bf Graphs} }\\
 \hline
$\cG = (\cV,\cE)$ & an undirected graph with vertices $\cV$ and edges $\cE$ \\
$n$ & $=|\cV|$ (number of vertices)\\
$m$ & $=|\cE|$ (number of edges) \\
$e = (i,j) \in \cE$ & edge of $\cG$ connecting nodes $i,j\in \cV$  \\
$d_i$ & degree of node $i$  \\
$c \in \R^n$ & $=(c_1,\dots,c_n)$; a vector of private values stored at the nodes of $\cG$  \\
$\bar{c}$ &$\bar{c}=\tfrac{1}{n}\sum_i c_i$ (the average of the private values)  \\
$\bA \in \R^{m\times n}$ &  \\
$\bL \in \R^{m\times m}$ & $=\bA \bA^\top$ (Laplacian of $\cG$)  \\
$\ac(\cG)$ & $=\lambda_{\min}^+(\bL)$ (algebraic connectivity of $\cG$)  \\
$\beta(\cG)$ & $=n/\ac(\cG)$  \\

\hline
\multicolumn{2}{|c|}{{\bf Randomness}}\\
\hline
$\Exp$ & expectation  \\
$\Prob$ & probability  \\
$\Var$ & variance  \\
$v_i^k$ & random variable from $N(0,\sigma_i^2)$   \\

\hline
\multicolumn{2}{|c|}{{\bf Optimization} }\\
\hline
$P: \R^n \to \R$ & primal objective function   \\
$D: \R^m \to \R$ & dual objective function  (a concave quadratic)  \\
$y \in \R^m$ & dual variable  \\
$y^* \in \R^m$ & optimal dual variable  \\
$x \in \R^n$ & primal variable  \\
$x^* \in \R^n$ & $=\bar{c}\ones$ (optimal primal variable)  \\
$\ones$ & a vector of all ones in $\R^n$  \\

\hline
\multicolumn{2}{|c|}{{\bf Summation} }\\
\hline

$\sum_{i}\sum_j$ & sum through all ordered pairs of $i$ and $j$ \\
$\sum_{(i,j)}$ & sum through all unordered pairs of $i$ and $j$ \\
$\sum_{(i,j) \in \cE}$ & sum through all edges of $\cG$ \\

\hline
\end{tabular}
\end{center}
\caption{The main notation used in the paper.}
\label{tbl:notation}
\end{table}

\end{document}